\numberwithin{equation}{section}
\crefname{def}{Definition}{Definitions}
\crefname{theorem}{Theorem}{Theorems}
\crefname{lemma}{Lemma}{Lemmas}
\crefname{corollary}{Corollary}{Corollaries}
\crefname{proposition}{Proposition}{Propositions}
\crefname{example}{Example}{Examples}
\crefname{remark}{Remark}{Remarks}
\crefname{question}{Question}{Questions}
\crefname{equation}{}{}
\crefname{figure}{Figure}{Figures}
\theoremstyle{definition}
\newtheorem{theorem}{Theorem}[section]
\newtheorem*{theorem*}{Theorem}
\newtheorem{definition}[theorem]{Definition}
\newtheorem*{definition*}{Definition}
\newtheorem{prop}[theorem]{Proposition}
\newtheorem*{prop*}{Proposition}
\newtheorem*{example*}{Example}
\newtheorem{remark}[theorem]{Remark}
\newtheorem*{remark*}{Remark}
\newtheorem{lemma}[theorem]{Lemma}
\newtheorem*{lemma*}{Lemma}
\newtheorem{corollary}[theorem]{Corollary}
\newtheorem*{corollary*}{Corollary}
\newtheorem*{question*}{Question}
\newtheorem*{conjecture*}{Conjecture}
\newtheorem*{exercise*}{Exercise}
\newtheorem*{claim*}{Claim}
\newtheorem*{theorema*}{Main Theorem}
\newtheorem*{propa*}{Proposition}
\newcommand{\N}{\mathbb{N}}
\newcommand{\R}{\mathbb{R}}
\DeclareMathOperator{\id}{id}
\newcommand{\Teich}{\mathcal{T}}
\DeclareMathOperator{\QD}{\mathrm{QD}}
\DeclareMathOperator{\SQD}{\mathrm{SQD}}
\DeclareMathOperator{\BQD}{\mathrm{BQD}}
\DeclareMathOperator{\MF}{\mathcal{MF}}
\DeclareMathOperator{\PMF}{\mathcal{PMF}}
\DeclareMathOperator{\inj}{\mathrm{inj}}
\newcommand{\bz}{\bar{z}}
\newcommand{\Preg}{P_\mathrm{reg}}
\newcommand{\Ppole}{P_\mathrm{pole}}
\title[Harmonic maps compactification and punctured surfaces]{The harmonic maps compactification of Teichm\"{u}ller spaces for punctured Riemann surfaces}
\author{Kento Sakai}
\address{Graduate~School~of~Science, Osaka~University}
\email{u741819k@ecs.osaka-u.ac.jp}
\date{\today}
\begin{document}
\begin{abstract}
  Wolf gave a homeomorphism from the Teichmüller space to the space of quadratic differentials on a closed Riemann surface by using harmonic maps. Moreover, using harmonic maps rays, he gave a compactification of the Teichmüller space and show that it coincides with the Thurston compactification. In this paper, we extend the harmonic maps compactification to the Teichmüller spaces of punctured Riemann surfaces, and show that it still coincides with the Thurston compactification.
\end{abstract}

\maketitle

\section{Introduction}

Let $S$ be a finite type surface of genus $g$ with $n$ punctures such that $\chi(S)=2-2g-n<0$. The \textit{Teichmüller space} of $S$ is defined as the set of complete finite-area hyperbolic metrics on $S$ modulo isotopy. Let $\Teich_{g,n}$ denote the Teichmüller space of $S$.

For $\sigma, \rho\in\Teich_{g,n}$, there exists a unique harmonic diffeomorphism $h(\sigma,\rho)$ from  $(S,\sigma)$ to $(S,\rho)$ isotopic to the identity with finite energy \cite{lohkamp1991harmonic}.
Then the $(2,0)$-part of the pull-buck metric $h(\sigma,\rho)^\ast\rho$ is a holomorphic quadratic differential on $(S,\sigma)$, which is called the \textit{Hopf differential} of $h(\sigma,\rho)$.
Therefore fixing the domain metric $\sigma$, we obtain a map $\Phi\colon\Teich_{g,n}\to\QD(\sigma)$ given by $\Phi(\rho)=(h(\sigma,\rho)^\ast\rho)^{2,0}$, where $\QD(\sigma)$ denotes the vector space of holomorphic quadratic differentials with finite $L^1$-norm on $(S,\sigma)$. It is shown that $\Phi$ is a homeomorphism by Wolf for $n=0$, by Lohkamp for $n>0$ \cite{wolf1989teichmuller,lohkamp1991harmonic}.

Using the map $\Phi$, we can obtain the compactification of $\Teich_{g,n}$ since the vector space $\QD(\sigma)$ has the natural compactification which is obtained by adding an infinity point to each ray from the origin.
This compactification is called the \textit{harmonic maps compactification} and denote by $\overline{\Teich_{g,n}^{\textrm{harm}}}$. The boundary of the harmonic maps compactification is identified with the unit sphere $\SQD(\sigma)$ in $\QD(\sigma)$. On the other hand, Thurston introduced the Thurston compactification $\overline{\Teich_{g,n}^{\mathrm{Th}}}$, which is given by embedding $\Teich_{g,n}$ to the projective space of functionals on the set $\mathcal{C}=\mathcal{C}(S)$ of isotopy classes of essential simple closed curves on $S$.
The boundary $\partial\overline{\Teich_{g,n}^{\mathrm{Th}}}$ of the Thurston compactification is the projective space $\PMF$ of measured foliations on $S$.
The action of the mapping class group on $\Teich_{g,n}$ extends contiunously to $\partial\overline{\Teich_{g,n}^{\mathrm{Th}}}$.

Wolf showed that, if $S$ is closed, the homeomorphism $\Phi$ continuously extends to the homeomorphism $\overline{\Phi}$ from $\overline{\Teich_{g,0}^{\mathrm{Th}}}$ to $\overline{\Teich_{g,0}^{\mathrm{harm}}}$.
This implies that the restriction of $\overline{\Phi}$ to the Thurston boundary coincides with the canonical identification $\overline{F_v}$ of $\SQD(\sigma)$ and $\PMF$, where $\overline{F_v}$ is induced by the map $F_v\colon\QD(\sigma)\to\MF$ given by the vertical measured foliation $F_v(\Psi)$ for $\Psi\in\QD(\sigma)$.
Therefore, the harmonic maps compactification is independent on the choice of the domain metric $\sigma$. In addition, the extension $\overline{\Phi}$ provides the Thurston compactification with a global parametrization by the closed unit ball in $\QD(\sigma)$.

In this paper, we extend the compatibility of the harmonic maps compactification and the Thurston compactification for the Teichmüller spaces of punctured surfaces.

\begin{theorema*}
	The homeomorphism $\Phi\colon \Teich_{g,n}\to\QD(\sigma)$ continuously extends to a homeomorphism from $\overline{\Teich_{g,n}^{\mathrm{Th}}}$ to $\overline{\Teich_{g,n}^{\mathrm{harm}}}$. Moreover, this extension on the boundary coincides with the inverse of the canonical identification $\overline{F_v}$.
\end{theorema*}

Our strategy is based on Wolf's proof for a closed Riemann surface. However, the compactness of the surface is essentially used in his proof. In our setting, a holomorphic quadratic differential may have a simple pole at a puncture, so we have to substaintially change his proof.

We outline our proof below, comparing with Wolf's proof. Fixing $\Phi_0\in\SQD(\sigma)$, we set $\rho_t\coloneqq\Phi^{-1}(t\Phi_0)$ for $t>0$. The one-parameter family $\{\rho_t\}$ is called the \textit{harmonic maps ray} in the direction of $\Phi_0$.
We first show that, for the harmonic diffeomorphsim $h(t)\colon(S,\sigma)\to(S,\rho_t)$, the norm of the Beltrami differential of $h(t)$ converges monotonically to $1$ as $t\to\infty$
(\Cref{prop:NormConvergence}).
For the proof of this convergence, \cite{wolf1989teichmuller} uses the compactness of the closed surface.
Therefore, using some results on holomorphic energy functions for punctured surfaces in \cite{lohkamp1991harmonic}, we prove \Cref{prop:NormConvergence} without the compactness.
From \Cref{prop:NormConvergence}, we obtain the asymptotic length of arcs along the leaves of the horizontal or vertical measured foliation of $\Phi_0$ (\Cref{prop:LengthConvergence}).

Let $\mathcal{C}=\mathcal{C}(S)$ be the set of isotopy classes of essential simple closed curves on $S$ and $\beta$ be the identification of $\QD(\sigma)$ and $\MF$ given by $\beta\Phi=F_v(4\Phi)$.
We next recall the fundamental lemma, which is an inequality of the intersection number function $i(\beta\Phi(\rho),\cdot)$ and the hyperbolic length function $\ell_\rho$ on $\mathcal{C}$ for a metric $\rho\in\Teich_{g,0}$.
\begin{theorem*}[{\cite[Lemma 4.1]{wolf1989teichmuller}}]
	Fix an isotopy class $[\gamma]\in\mathcal{C}$. Then, for every $\rho\in\Teich_{g,0}$, there exist positive constants $k_0=k_0(\|\Phi(\rho)\|)$ and $\eta(\|\Phi(\rho)\|,[\gamma])$ such that
	\begin{equation*}
		i(\beta\Phi(\rho),[\gamma])\leq\ell_{\rho}([\gamma])\leq k_0i(\beta\Phi(\rho),[\gamma])+\eta
	\end{equation*}
	where $k_0\searrow 1$ and $\eta\|\Phi(\rho)\|^{-1/2}\to 0$ as $\|\Phi(\rho)\|\to\infty$.
\end{theorem*}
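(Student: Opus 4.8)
The plan is to transport everything to the natural coordinates of the Hopf differential $\Phi=\Phi(\rho)=\phi\,dz^2$, where both the hyperbolic length and the intersection number acquire clean expressions, and then to reduce the two inequalities to a single pointwise line-element comparison governed by one analytic estimate on the degeneration of $h=h(\sigma,\rho)$. Writing $\mathcal H$ and $\mathcal L$ for the holomorphic and anti-holomorphic energy densities of $h$ (so that $\mathcal H\mathcal L=|\phi|^2/\sigma^2$ and $h^\ast\rho=\sigma(\mathcal H+\mathcal L)|dz|^2+\Phi+\bar\Phi$), I would choose a natural coordinate $\zeta=\xi+i\eta$ with $\Phi=d\zeta^2$ and compute that the pull-back metric diagonalizes as
\[
  h^\ast\rho=(u+u^{-1})^2\,d\xi^2+(u-u^{-1})^2\,d\eta^2,\qquad u:=(\mathcal H/\mathcal L)^{1/4}\ge 1 .
\]
Since $h$ is an isometry from $(S,h^\ast\rho)$ to $(S,\rho)$ isotopic to the identity, $\ell_\rho([\gamma])$ is the $h^\ast\rho$-length of the $h^\ast\rho$-geodesic in $[\gamma]$; and because $\beta\Phi=F_v(4\Phi)$ has transverse measure $2|d\xi|$, one has $i(\beta\Phi,[\gamma])=2\inf_{\gamma'\in[\gamma]}\int_{\gamma'}|d\xi|$.

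The lower bound is then immediate: dropping the $d\eta^2$ term and using $u+u^{-1}\ge 2$ gives the pointwise inequality $ds_{h^\ast\rho}\ge 2\,|d\xi|$, so integrating over the $h^\ast\rho$-geodesic and passing to the infimum over $[\gamma]$ yields $i(\beta\Phi,[\gamma])\le\ell_\rho([\gamma])$.

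For the upper bound I would estimate $\ell_\rho([\gamma])$ using the flat ($|\Phi|$-)geodesic $\gamma_\ast$ in $[\gamma]$, which simultaneously realizes $\int_{\gamma_\ast}|d\xi|=\tfrac12\,i(\beta\Phi,[\gamma])$ and $\int_{\gamma_\ast}|d\eta|=\tfrac12\,i(F_h(4\Phi),[\gamma])$. From $\sqrt{(u+u^{-1})^2d\xi^2+(u-u^{-1})^2d\eta^2}\le(u+u^{-1})|d\xi|+|u-u^{-1}|\,|d\eta|$ I obtain
\[
  \ell_\rho([\gamma])\le i(\beta\Phi,[\gamma])+\int_{\gamma_\ast}(u+u^{-1}-2)\,|d\xi|+\int_{\gamma_\ast}|u-u^{-1}|\,|d\eta| .
\]
I would then split $\gamma_\ast$ into the part lying in the complement of fixed small neighborhoods of the (finitely many) zeros of $\Phi$ and the part near those zeros. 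On the first piece, where $u$ is close to $1$, the first integral is bounded by $(k_0-1)\,i(\beta\Phi,[\gamma])$ with $k_0\searrow 1$; the second integral on this piece, together with the contributions of both integrals on the neighborhoods of the zeros, is absorbed into $\eta$.

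The crux --- and the only place real analysis enters --- is the uniform statement that $u\to 1$, equivalently $\mathcal L/\mathcal H\to 1$, equivalently that the Beltrami coefficient of $h$ tends to $1$, on the complement of the zero-neighborhoods, with a rate depending only on $\|\Phi(\rho)\|$, together with a bound on the size of the zero-neighborhoods guaranteeing that their total contribution is $o(\|\Phi(\rho)\|^{1/2})$. This is exactly the degeneration recorded in \Cref{prop:NormConvergence} (and reflected in \Cref{prop:LengthConvergence}); to upgrade the sup-norm convergence there to the required uniform pointwise control I expect to need the Bochner-type identity $\Delta_\sigma\log\mathcal H=2\mathcal H-2\mathcal L-2K_\sigma$ (with $K_\sigma=-1$) combined with a maximum-principle/subsolution argument, which is what forces $k_0\searrow 1$ and $\eta\,\|\Phi(\rho)\|^{-1/2}\to 0$ as $\|\Phi(\rho)\|\to\infty$. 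Controlling the zero-neighborhoods uniformly is, I anticipate, the main obstacle.
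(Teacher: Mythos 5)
Your setup is sound and is in fact the same frame Wolf uses: the diagonalization $h^\ast\rho=(u+u^{-1})^2\,d\xi^2+(u-u^{-1})^2\,d\eta^2$ in natural coordinates, the identity $i(\beta\Phi,[\gamma])=2\inf\int|d\xi|$, and the resulting one-line lower bound are all correct. The gap is in the upper bound, precisely at the point you flag as ``the main obstacle'' but then leave unresolved: the zero-neighborhood contribution cannot simply be ``absorbed into $\eta$''. Note first that $u\to\infty$ at the zeros of $\Phi$ (there $\mathcal L=0$ while $\mathcal H>0$), so you cannot bound the near-zero integrand by a sup of $u$; what is true is the line-element bound $(u+u^{-1})|d\xi|=(\mathcal H^{1/2}+\mathcal L^{1/2})\sigma^{1/2}|dz|\lesssim\|\Phi(\rho)\|^{1/2}|dz|$. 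Consequently, if you keep zero-neighborhoods of a fixed size $\delta$, the portion of the flat geodesic $\gamma_\ast$ inside them contributes on the order of $\delta\,\|\Phi(\rho)\|^{1/2}$ (times the number of visits, which is bounded in terms of $[\gamma]$), so your $\eta$ satisfies only $\eta\,\|\Phi(\rho)\|^{-1/2}\to c(\delta)>0$, not $\to 0$. This is exactly the weaker, $\varepsilon$-allowance statement (\Cref{prop:FundamentalInequality}) that the present paper proves for punctured surfaces; it falls strictly short of Wolf's Lemma 4.1.

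What makes the sharp limit $0$ true on a closed surface, and what is missing from your argument, is the staircase construction: instead of estimating along $\gamma_\ast$ itself, one replaces it by a homotopic curve $\gamma_\Phi$ made of horizontal and vertical arcs such that (i) all horizontal arcs avoid fixed neighborhoods of the zeros, (ii) the total horizontal measure is still exactly $i(\beta\Phi,[\gamma])$ --- no horizontal measure is added --- and (iii) the vertical arcs have $|\Phi|$-length bounded by a constant $K([\gamma])$. Point (ii) is possible precisely because every zero of a holomorphic quadratic differential on a closed surface is a cone point of angle at least $3\pi$, so near each zero the curve can be pushed onto critical vertical leaves without increasing $\int|d\xi|$; this is exactly what fails at a simple pole (cone angle $\pi$), and it is why the punctured case in this paper only achieves $c_0<\varepsilon$. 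With such a $\gamma_\Phi$, the horizontal part is handled by your $k_0$-estimate away from the zeros, and the entire near-zero contribution sits on vertical arcs, whose $\rho$-length is $o(\|\Phi(\rho)\|^{1/2})$ by the analogue of \Cref{prop:LengthConvergence}(2) (which holds even for arcs through zeros); this is what yields $\eta\,\|\Phi(\rho)\|^{-1/2}\to0$. Alternatively you could try to rescue your scheme by letting $\delta=\delta(\|\Phi(\rho)\|)\to0$ through a diagonal argument, but then you must also prove that $u\to1$ uniformly over all directions $\Phi_0\in\SQD(\sigma)$ (not just along a single ray) at a rate depending only on the norm; in the paper this uniformity comes from upper semicontinuity of the stretch-ratio function together with Dini's theorem on the compact sphere $\SQD(\sigma)$, not from the Bochner identity and maximum principle alone, and neither of these steps is carried out in your proposal.
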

The key to show the lemma is constructing a ``staircase" representative $\gamma_\Phi$ for $\Phi\in\SQD(\sigma)$. The representative $\gamma_\Phi$ satisfies some conditions. For example, it consists of curves along leaves of horizontal or vertical foliation of $\Phi$ and the intersection number $i(\beta\Phi(\rho),\gamma_\Phi)$ is equal to $i(\beta\Phi(\rho),[\gamma])$. However, for punctured surfaces, we cannot construct $\gamma_\Phi$ in the same manner as \cite{wolf1989teichmuller}, since a quadratic differential may have a simple pole at each puncture. Hence, we show the following instead for punctured surfaces.
\setcounter{section}{4}
\begin{theorem}
	Fix an isotopy class $[\gamma]\in\mathcal{C}$ and $\varepsilon>0$. Then, there exists a nonnegative number $c_0=c_0([\gamma],\varepsilon)<\varepsilon$ satisfying the following: for every $\rho\in\Teich_{g,n}$, there exist positive constants $k_0=k_0(\|\Phi(\rho)\|,\varepsilon)$ and $\eta=\eta(\|\Phi(\rho)\|,[\gamma],\varepsilon)$ such that
	\begin{equation*}
		i(\beta\Phi(\rho),[\gamma])\leq\ell_\rho([\gamma])\leq k_0i(\beta\Phi(\rho),[\gamma])+\eta
	\end{equation*}
	where $k_0\searrow 1$ and $\eta\|\Phi(\rho)\|^{-1/2}\to c_0$ as $\|\Phi(\rho)\|\to\infty$.
\end{theorem}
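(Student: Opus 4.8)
The plan is to follow the strategy of \cite{wolf1989teichmuller}, transporting hyperbolic lengths into the flat geometry of the Hopf differential $\Phi\coloneqq\Phi(\rho)$ by means of the harmonic map $h\coloneqq h(\sigma,\rho)$. Work in a natural coordinate $\zeta=\xi+i\eta$ of $\Phi$ (so $\Phi=d\zeta^2$ away from its zeros and poles), and write $\mathcal{H}$ and $\mathcal{L}$ for the holomorphic and antiholomorphic energy densities of $h$, normalized so that $\sqrt{\mathcal{H}\mathcal{L}}\equiv1$ in this coordinate; then the pointwise norm of the Beltrami differential is $|\mu|=\sqrt{\mathcal{L}/\mathcal{H}}$ and the pull-back metric is diagonal,
\begin{equation*}
	h^\ast\rho=\bigl(\sqrt{\mathcal{H}}+\sqrt{\mathcal{L}}\bigr)^2 d\xi^2+\bigl(\sqrt{\mathcal{H}}-\sqrt{\mathcal{L}}\bigr)^2 d\eta^2 .
\end{equation*}
Since $\beta\Phi=F_v(4\Phi)$ has transverse measure $2\,|d\xi|$, the lower bound is immediate and insensitive to the poles: for any tangent vector the $h^\ast\rho$-length dominates $(\sqrt{\mathcal{H}}+\sqrt{\mathcal{L}})\,|d\xi|\ge 2\,|d\xi|$ by the arithmetic--geometric mean inequality applied to $\sqrt{\mathcal{H}\mathcal{L}}=1$, so integrating along any representative and passing to the infimum over the isotopy class yields $i(\beta\Phi,[\gamma])\le\ell_\rho([\gamma])$.

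For the upper bound I would construct a ``staircase'' representative $\gamma_\Phi$ of $[\gamma]$ out of finitely many horizontal and vertical segments of the foliation of $\Phi$, with the horizontal segments carrying the whole $\xi$-extent and the vertical segments only connecting them and fixing the isotopy class. From the displayed metric, a horizontal segment has $h^\ast\rho$-length $\int(1+|\mu|)\,|\mu|^{-1/2}\,d\xi$ and a vertical segment has length $\int(1-|\mu|)\,|\mu|^{-1/2}\,d\eta$. By \Cref{prop:NormConvergence} the quantity $|\mu|$ increases monotonically to $1$ as $\|\Phi\|\to\infty$, so $(1+|\mu|)\,|\mu|^{-1/2}\searrow 2$ while $(1-|\mu|)\,|\mu|^{-1/2}\to 0$; \Cref{prop:LengthConvergence} promotes this to the integrated statement that the total length of the horizontal segments is asymptotic to twice their total $\xi$-extent, and that of the vertical segments is of lower order. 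Arranging the staircase so that its total $\xi$-extent equals the minimal one $i(\beta\Phi,[\gamma])/2$ then yields the multiplicative constant $k_0\searrow1$, the residual discrepancy $(1+|\mu|)|\mu|^{-1/2}-2$ together with the vertical contribution being absorbed into $\eta$; away from the poles these errors are $o(\|\Phi\|^{1/2})$, exactly as in the closed case.

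The genuinely new feature is the behaviour at a simple pole of $\Phi$, where the horizontal foliation has a one-pronged singularity and the flat metric $|\Phi|$ has a cone point of angle $\pi$. Through such a point one cannot thread a horizontal--vertical staircase as one does through a regular point or an even-order zero, so I would excise a disk of radius $\delta$ in the flat metric of the normalized direction $\Phi_0\coloneqq\Phi/\|\Phi\|$ about each such puncture met by $\gamma_\Phi$ and join the incoming horizontal segments by a short detour around its boundary. In the natural coordinate of $\Phi$ the excised disk has radius of order $\|\Phi\|^{1/2}\delta$, so the detour has $\xi$-extent of that order, and by the length computation above its $h^\ast\rho$-length is of order $\|\Phi\|^{1/2}\delta$. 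This excess is not accounted for by the minimal intersection number, so it feeds into $\eta$; after dividing by $\|\Phi\|^{1/2}$ one obtains $\eta\,\|\Phi\|^{-1/2}\to c_0$, where $c_0$ is a sum over the relevant punctures of terms of size comparable to $\delta$. Because the cone angle is always $\pi$, this bound is uniform in the direction $\Phi_0$, hence in $\rho$; choosing $\delta=\delta([\gamma],\varepsilon)$ small enough (and small enough that excision does not alter the isotopy type of $\gamma_\Phi$) forces $c_0=c_0([\gamma],\varepsilon)<\varepsilon$.

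The principal obstacle is to make this detour estimate precise and uniform. One must first give a careful description of the horizontal and vertical trajectory structure near a cone point of angle $\pi$, in order to produce the connecting detour with $\xi$-extent controlled by $\delta$ and with controlled intersection with the vertical foliation. The more delicate point is that the length asymptotics rely on $|\mu|\to1$, which \Cref{prop:NormConvergence} and \Cref{prop:LengthConvergence} control only away from the singular set; near a pole the energy densities degenerate, so one cannot send $\delta\to0$ uniformly in $\rho$. The excised disk serves precisely to confine this uncontrolled behaviour to a region whose normalized length contribution is at most $c_0<\varepsilon$, and the crux of the argument---where it departs most from \cite{wolf1989teichmuller}---is to verify that the controlled part of the staircase still produces $k_0\searrow1$ with an additive error that is $o(\|\Phi\|^{1/2})$ apart from this residual $c_0$.
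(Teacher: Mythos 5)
Your plan follows essentially the same route as the paper: the lower bound via $(\sqrt{\mathcal{H}}+\sqrt{\mathcal{L}})\,|d\xi|\geq 2\,|d\xi|$, a staircase representative whose main horizontal part carries the intersection number and produces $k_0\searrow 1$, vertical arcs of lower order, and a detour around a small flat disk about each problematic puncture whose extra horizontal measure (proportional to the excised radius via a circumference bound) is exactly the source of the nonzero residue $c_0<\varepsilon$; this detour is the paper's operation $(\ast)$ in \Cref{lemma:StaircaseReoresentative}, with the circumference bound supplied by \Cref{lemma:Rneighborhood}. However, there are two concrete gaps. First, excising only around the \emph{simple poles} of $\Phi$ is not enough: a curve in $S$ cannot pass through any puncture, and the $|\Phi|$-geodesic representative you start from may touch a puncture even when $\Phi$ extends over it as a regular point or a zero; worse, since the constants must be uniform over all $\rho$, the direction $\Phi_0=\Phi(\rho)/\|\Phi(\rho)\|$ ranges over all of $\SQD(\sigma)$, and as $\Phi_0$ varies, zeros split off from punctures, so the local singular structure near a puncture is not stable. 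The paper excises an $r$-neighborhood of \emph{every} puncture, works in a small neighborhood of each fixed $\Phi_0$ inside the compact set $\SQD(\sigma)$, and analyzes disks around zeros adjacent to a pole via a double branched cover; none of this is visible in your sketch.

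Second, the uniformity in $\rho$ that you attribute to ``the cone angle is always $\pi$'' does not follow from the cone angle alone; it requires three separate arguments which are the actual content of the paper's proof. (i) You need a bound, depending only on $[\gamma]$ and the topology, on the \emph{number} of times the staircase meets the excised disks: the total extra horizontal measure is (number of detours) times (circumference), so without such a bound shrinking the radius does not force $c_0<\varepsilon$. The paper gets this from convexity of small $|\Phi|$-disks, a lower bound on the injectivity radius of the $|\Phi|$-metrics over $\SQD(\sigma)$, and the fact that $\Gamma_\Phi$ is geodesic. (ii) The convergence $k_0\searrow 1$ must be uniform over all directions, whereas \Cref{prop:NormConvergence} is a pointwise, ray-by-ray statement; the paper closes this with upper semicontinuity of the function $k_2$ and Dini's theorem on the compact set $\SQD(\sigma)$. (iii) Your claim that a detour of flat radius $\|\Phi\|^{1/2}\delta$ has $h^\ast\rho$-length of the same order uses boundedness of $(1+|\mu|)\,|\mu|^{-1/2}$, i.e.\ a positive lower bound on $|\mu|$ along the detour, uniform in $\Phi_0$; this is precisely \Cref{prop:ExponentialEstimateHoriArc} combined with \Cref{remark:ConstantEta2}, where the detour arcs are kept a definite $|\Phi|$-distance from zeros and a lower semicontinuity argument on $\SQD(\sigma)$ furnishes the uniform constants. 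You correctly identify (ii) and (iii) as the crux, but the proposal does not supply the mechanism (compactness of $\SQD(\sigma)$ plus semicontinuity/Dini plus the exponential decay estimate of \Cref{lemma:exponentialestimate}) that closes them, so as written the argument is a program rather than a proof.
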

\setcounter{section}{1}
\noindent
This propositon is different from Wolf's lemma in that the limit of $\eta\|\Phi(\rho)\|^{-1/2}$ as $\|\Phi(\rho)\|\to\infty$ may not be $0$. Setting the allowance for arbitraly small $\varepsilon>0$, we can construct such a ``staircase" representative $\gamma_\Phi$. However, unlike the above lemma, we need to increase the horizontal measure of the representative $\gamma_\Phi$ from $i(\beta\Phi(\rho),[\gamma])$ by a little. Therefore, in order to estimate the length of the additional horizontal arcs, we use some results on quadratic differential metrics in \cite{minsky1992harmonic,wolf1989teichmuller}.
\Cref{prop:FundamentalInequality} is a week version of \cite[Lemma 4.1]{wolf1989teichmuller}, but still leads to the main theorem.

\subsection*{Acknowlagement} The author would like to thank my supervisor, Shinpei Baba, for many discussions and helpful advice.
\section{Background}

\subsection{Harmonic maps}

Let $\overline{S}$ be a closed, oriented, connected surface of genus $g$, and $S\coloneqq\overline{S}-\{p_1,\ldots,p_n\}$.
If $\chi(S)=2-2g-n<0$, the surface admits (complete, finite area) hyperbolic metrics.
Throughout this paper, we fix $S$ such that $\chi(S)<0$ and $S$ is not a thrice punctured surface.
Let $\sigma$ be a hyperbolic metric on $S$.
The hyperbolic surface $(S,\sigma)$ can be regarded as a (punctured) Riemann surface by taking the isothermal coordinates system for $\sigma$.

Let $\sigma|dz|^2, \rho|dw|^2$ be hyperbolic metrics on $S$, where $z=x+iy$ and $w=u+iv$ denote the conformal structures of $(S,\sigma)$ and $(S,\rho)$, respectively. For a $C^2$ map $f\colon(S,\sigma|dz|^2)\to(S,\rho|dw|^2)$, we define the \textit{energy density} of $f$ by
\begin{equation*}
  e(f)\coloneqq \frac{\rho(f(z))}{\sigma(z)}(|f_z|^2+|f_{\bz}|^2),
\end{equation*}
the \textit{total energy} of $f$ by
\begin{equation*}
  \mathcal{E}(f)\coloneqq\int_{S} \rho(f(z))(|f_z|^2+|f_{\bz}|^2)\, \frac{i}{2}dzd\bz
\end{equation*}
and, the \textit{holomorphic energy} $H(f)$ and  \textit{anti-holomorphic energy} $L(f)$ by
\begin{align*}
  H(f) \coloneqq \frac{\rho(f(z))}{\sigma(z)}|f_z|^2, \
  L(f) \coloneqq\frac{\rho(f(z))}{\sigma(z)}|f_{\bz}|^2.
\end{align*}
Let $J(f)$ be the Jacobian of $f$, then $J(f)=H(f)-L(f)$. The Beltrami differential of $f$ is defined by
\begin{equation*}
  \nu(f)\coloneqq \frac{f_{\bz} {d\bz}}{f_zdz}.
\end{equation*}
 The norm of the Beltrami differential of $f$ is the well-defined function on $(S,\sigma)$. Clearly, we have $|\nu(f)|^2=L(f)/H(f)$.

A $C^2$ map $f\colon (S,\sigma)\to(S,\rho)$ is said to be \textit{harmonic} if $f$ is a critical point of the energy functional $\mathcal{E}$. By considering the variation problem for energy, $f$ is a harmonic map if and only if $f$ satisfies the Euler-Lagrange equation
\begin{equation*}
  f_{z\bz}+\frac{\rho_w}{\rho}f_zf_{\bz}=0 .
\end{equation*}

It is a well-known fact that if $S$ is closed, there exists a unique harmonic diffeomorphism to the identity map from $(S,\sigma)$ to $(S,\rho)$. (Eells and Sampson  proved the existence of a harmonic map in each homotopy class \cite{eells1964harmonic}. Hartman proved its uniquness \cite{hartman1967homotopic}. Shoen-Yau and Sampson indepndently proved that the harmonic map homotopic to the identity is a diffeomorphism \cite{schoen1978univalent,sampson1978some}.)

In the case of punctured surfaces, Lohkamp showed the following.

\begin{theorem}[\cite{lohkamp1991harmonic}]
  Let $S$ be a surface of finite type, and $\sigma,\rho$ be complete hyperbolic metrics of finite area on $S$.
  Then, there exists a unique harmonic diffeomorphism $h(\sigma,\rho)\colon (S,\sigma)\to(S,\rho)$ such that $h(\sigma,\rho)$ is homotopic to the identity and $\mathcal{E}(h(\sigma,\rho))<+\infty$.
\end{theorem}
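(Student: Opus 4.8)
The plan is to establish existence and uniqueness separately and then upgrade the resulting harmonic map to a diffeomorphism, with all three steps requiring genuinely new input at the cusps, which is where non-compactness obstructs the classical closed-surface arguments.

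\emph{Existence.} Since $\sigma$ and $\rho$ are both complete of finite area, near each puncture $p_i$ they are isometric to the standard hyperbolic cusp, modelled on the punctured disk with metric $|dw|^2/(|w|^2(\log|w|)^2)$. I would therefore fix a smooth reference map $f_0$ homotopic to the identity which is an isometry on fixed neighborhoods of the cusps; such an $f_0$ has $\mathcal{E}(f_0)<\infty$, since on each cusp its energy density equals that of an isometry and the cusp has finite area. Next I would exhaust $S$ by compact subsurfaces-with-boundary $S_k$ obtained by truncating the cusps at decreasing horocyclic levels, and on each $S_k$ solve the Dirichlet problem for the harmonic map equation with boundary data $f_0|_{\partial S_k}$. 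Because the target has non-positive curvature, Hamilton's theory supplies a unique energy-minimizing solution $h_k$ homotopic to $f_0$ rel boundary, with $\mathcal{E}(h_k)\le\mathcal{E}(f_0)$. The crucial point is a uniform a priori estimate: using convexity of distance in the negatively curved target together with a barrier built from the explicit cusp metric, I would bound $\sup_{S_k} d_\rho(h_k,f_0)$ independently of $k$, preventing the approximating maps from drifting out of or degenerating in the cusps. With the uniform energy and displacement bounds, interior estimates for harmonic maps into non-positively curved targets (Eells–Sampson, Schoen–Uhlenbeck) give uniform $C^1_{\mathrm{loc}}$, hence $C^\infty_{\mathrm{loc}}$, bounds, and an Arzelà–Ascoli/diagonal argument extracts a harmonic limit $h$ on all of $S$. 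Lower semicontinuity of energy gives $\mathcal{E}(h)\le\mathcal{E}(f_0)<\infty$, and the cusp displacement bound shows $h$ is proper, respects the cusps, and is homotopic to the identity.

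\emph{Uniqueness.} Suppose $h_0,h_1$ are two finite-energy harmonic maps homotopic to the identity. By Hartman's computation, since the target curvature is $\le 0$, the function $x\mapsto d_\rho(h_0(x),h_1(x))$ is subharmonic on $(S,\sigma)$. The finite-energy condition, combined with the cusp analysis of the existence step, shows both maps are asymptotic to the same cusp isometry, so this distance function is bounded. A complete finite-area hyperbolic surface is parabolic (its volume growth forces $\int^\infty r\,dr/V(B_r)=\infty$), so every bounded subharmonic function on it is constant; thus $d_\rho(h_0,h_1)$ is constant. Because the target curvature is strictly negative ($\equiv-1$), Hartman's rigidity then forces $h_0=h_1$ unless both maps factor through a single geodesic, which is impossible for maps homotopic to a homeomorphism.

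\emph{Diffeomorphism.} The Hopf differential $\Phi=(h^\ast\rho)^{2,0}$ is holomorphic, so its zeros—equivalently, where $H(h)>0$, the zeros of $|\nu(h)|^2=L(h)/H(h)$—are isolated of finite order. The standard argument shows $H(h)>0$ everywhere, and the Bochner-type identity $\Delta_\sigma\log H(h)=2J(h)-2$ together with the maximum-principle reasoning of Schoen–Yau and Sampson then shows the Jacobian $J(h)=H(h)-L(h)$ has constant sign; the degree-one homotopy to the identity excludes $J(h)\equiv 0$, so $J(h)>0$ and $h$ is a local diffeomorphism. Finally, a proper local diffeomorphism of degree one is a covering map of degree one, hence a global diffeomorphism, and the explicit cusp model confirms $h$ restricts to a diffeomorphism of each end.

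The main obstacle I anticipate is the uniform cusp estimate in the existence step. On a closed surface the maximum principle for the energy density and for the distance function is applied on a compact manifold; here one must instead construct explicit comparison functions adapted to the cusp metric $|dw|^2/(|w|^2(\log|w|)^2)$ to confine the approximating maps. This same cusp analysis is precisely what guarantees boundedness of the distance function needed for uniqueness and the properness needed to conclude that $h$ is a diffeomorphism, so the cusp behavior is the single technical heart of all three parts.
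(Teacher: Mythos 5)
The statement you were asked to prove is not proved in the paper at all: it is imported verbatim from \cite{lohkamp1991harmonic}, so there is no in-paper argument to compare against, and your proposal must be judged against the literature. Your three-part architecture --- exhaustion by truncated surfaces with Dirichlet data from a reference map that is a cusp isometry, Hartman-type uniqueness upgraded to the noncompact setting via parabolicity, and the Schoen--Yau/Sampson Jacobian argument for the diffeomorphism property --- is the standard and essentially correct route for this theorem, and you correctly identify the cusps as the technical heart.

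That said, three steps in your sketch are genuinely incomplete as written. First, the uniform displacement bound: the subharmonicity of $x\mapsto d_\rho(h_k(x),f_0(x))$ requires \emph{both} maps to be harmonic, and $f_0$ is not; so ``convexity of distance plus a barrier'' is not yet an argument. You must either construct an explicit supersolution in the cusp dominating $d_\rho(h_k,f_0)$ (possible because $f_0$ is an isometry there, but this is the real work), or use a Courant--Lebesgue argument from the uniform energy bound $\mathcal{E}(h_k)\le\mathcal{E}(f_0)$. Second, in the uniqueness step you assert that any two finite-energy harmonic maps ``are asymptotic to the same cusp isometry,'' but the existence step only controls the one map you built; for an \emph{arbitrary} finite-energy harmonic map homotopic to the identity you need independent cusp asymptotics (finite energy forces the Hopf differential to be integrable, hence to have at most simple poles at the punctures, and from this one extracts cusp-to-cusp behavior at bounded distance). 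Alternatively, you can bypass boundedness entirely: since $|\nabla d_\rho(h_0,h_1)|\le e(h_0)^{1/2}+e(h_1)^{1/2}$, the distance function has finite Dirichlet integral, and on a parabolic surface a subharmonic function with finite Dirichlet integral is already constant. Third, the diffeomorphism step: the maximum-principle proof that $H>0$ and $J>0$ is a closed-surface argument, and on the punctured surface the formulae $\Delta_\sigma\log H = 2H-2L-2$ give nothing without control at infinity. One needs, e.g., boundedness of the holomorphic energy $H$ (precisely the content of Lohkamp's Corollary 3 and Lemma 9, which this paper invokes elsewhere) or an Omori--Yau-type maximum principle before the sign of $J$ can be concluded; your sketch silently applies the compact maximum principle at this point.
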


\subsection{Compactifications of the Teichm\"{u}ller space}\label{subsection:CompactificationsOfTeichmüllerSpace}

The \textit{Teichmüller space} of the surface $S$ is the quotient space of hyperbolic metrics on $S$ by the pull-buck action of diffeomorphisms isotopic to the identity. Let $\Teich_{g,n}$ denote the Teichmüller space of $S$. We often write $\rho$ simply for $[\rho]\in\Teich_{g,n}$.

A holomorphic quadratic differential $\Phi$ on $(S,\sigma)$ is said to be \textit{integrable}, if the $|\Phi|$-area of $(S,\sigma)$ is finite, namely
\begin{equation*}
  \|\Phi\|= \int_{S} |\Phi| dxdy < +\infty.
\end{equation*}
Let $\QD(\sigma)$ denote the space of integrable holomorphic quadratic differentials on $(S,\sigma)$. For a holomorphic quadratic differential $\Phi$ on $(S,\sigma)$, $\Phi$ is integrable if and only if $\Phi$ has poles of at most one degree at punctures.

Fixing the metric $\sigma$ of the domain surface, we set $h(\rho)\coloneqq h(\sigma,\rho)$ for $\rho$. Since $(h(\rho)^\ast\rho)^{2,0}$ is a holomorphic quadratic differenttial on $(S,\sigma)$ with finite norm   (\cite{lohkamp1991harmonic} Lemma 7), we can define the map $\Phi$ as
\begin{equation*}
  \Phi : \Teich_{g,n}\to \QD(\sigma) ;  [\rho]\mapsto(h(\rho)^\ast\rho)^{2,0}.
\end{equation*}
The uniqueness of the harmonic diffeomorphism shows that the map $\Phi$ is well-defined. Here, we list very useful well-known formulae.

\begin{prop}\label{prop:formulae}
  Let $J(\rho)=J(h(\rho)), H(\rho)=H(h(\rho)), L(\rho)=L(h(\rho))$, and $\nu(\rho)=\nu(h(\rho))$. Then the following hold:
  \begin{enumerate}[(I)]
    \item $J(\rho)=H(\rho)-L(\rho)$
    \item $|\Phi(\rho)|/\sigma^2=H(\rho)L(\rho)$
    \item $|\nu(\rho)|H(\rho)\sigma=|\Phi(\rho)|$
    \item $\nu(\rho)=L(\rho)/H(\rho)$
    \item $\Delta_\sigma H(\rho)=2H(\rho)-2L(\rho)-2$
    \item $\Delta_\sigma L(\rho)=2L(\rho)-2H(\rho)-2$ on $S-\{\Phi(\rho)= 0\}$
  \end{enumerate}
  Here,
  \begin{equation*}
    \Delta_\sigma\coloneqq\frac{4}{\sigma}\frac{\partial^2}{\partial z\partial\bz}
  \end{equation*}
  is the \textit{Laplace-Beltrami operator} on $(S,\sigma)$.
\end{prop}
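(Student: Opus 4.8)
The plan is to separate the six identities into the algebraic identities (I)--(IV), which are immediate once the Hopf differential is written out in a local conformal coordinate, and the Bochner-type identities (V)--(VI), which are where the harmonic map equation and the hyperbolicity of $\sigma$ and $\rho$ enter. All six are pointwise identities on $S$ (on $S-\{\Phi(\rho)=0\}$ for (VI)), so the behaviour at the punctures is irrelevant and the whole computation is local.

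First I would pull back the target metric. Writing $w=h(\rho)$ and $dw=h_z\,dz+h_{\bz}\,d\bz$, the $dz^2$-coefficient of $h(\rho)^\ast(\rho|dw|^2)$ is $\rho(h)\,h_z\,\overline{h_{\bz}}$, so $\Phi(\rho)=\rho(h)\,h_z\,\overline{h_{\bz}}\,dz^2$ and $|\Phi(\rho)|=\rho(h)\,|h_z|\,|h_{\bz}|$. Identity (I) is then just the expression of $J$ in these coordinates, $J=\tfrac{\rho(h)}{\sigma}(|h_z|^2-|h_{\bz}|^2)$. Multiplying the two densities gives $HL=\tfrac{\rho(h)^2}{\sigma^2}|h_z|^2|h_{\bz}|^2=|\Phi(\rho)|^2/\sigma^2$, which is (II). Since $|\nu|=|h_{\bz}|/|h_z|$, one has $|\nu|\,H\,\sigma=\tfrac{|h_{\bz}|}{|h_z|}\,\rho(h)|h_z|^2=\rho(h)|h_z||h_{\bz}|=|\Phi(\rho)|$, giving (III), and $|\nu|^2=|h_{\bz}|^2/|h_z|^2=L/H$ gives (IV). Each of these is a one-line substitution.

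The substantive content is (V) and (VI). Setting $\phi=h_z$ and $\psi=h_{\bz}$, the Euler--Lagrange equation reads $\partial_{\bz}\phi+(\log\rho)_w\,\phi\psi=0$, and since mixed partials commute it also gives $\partial_z\psi=\partial_{\bz}\phi=-(\log\rho)_w\,\phi\psi$. I would compute $\Delta_\sigma\log H=\tfrac{4}{\sigma}\partial_z\partial_{\bz}\log H$ by expanding $\log H=\log\rho(h)+\log\phi+\log\bar\phi-\log\sigma$ and differentiating twice, using these relations whenever a second derivative of $h$ appears so that the result is expressed only through first derivatives of $h$ and the first and second derivatives of $\log\rho$ composed with $h$. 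The remaining curvature terms are then evaluated from the hyperbolicity: $K_\sigma=-1$ gives $\partial_z\partial_{\bz}\log\sigma=\sigma/2$ and $K_\rho=-1$ gives $\partial_w\partial_{\bar w}\log\rho=\rho/2$. Collecting terms yields $\Delta_\sigma\log H=2H-2L-2$, which is (V). Identity (VI) is the same computation with $\phi$ and $\psi$ interchanged, giving $\Delta_\sigma\log L=2L-2H-2$; it holds exactly where $L>0$, and since $H>L\ge 0$ for a harmonic diffeomorphism the locus $L=0$ is the zero set of $\psi=h_{\bz}$, which is precisely $\{\Phi(\rho)=0\}$.

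The main obstacle is the bookkeeping in the double differentiation for (V). The terms $\log\phi+\log\bar\phi$ and $\log\rho(h)$ both generate second-order contributions, and one must verify that the genuinely second-order pieces (the $(\log\rho)_{ww}(h)$ terms and the derivatives of $\psi$) cancel after substituting $\partial_z\psi=-(\log\rho)_w\phi\psi$, leaving only curvature terms. Concretely, the $(\log\rho)_{w\bar w}(h)=\rho(h)/2$ contributions must combine into the target-curvature term $-2K_\rho(H-L)=2H-2L$, while the $\partial_z\partial_{\bz}\log\sigma=\sigma/2$ term produces the domain-curvature constant $2K_\sigma=-2$. Confirming that no spurious terms survive this cancellation is the only delicate step; everything else is formal substitution.
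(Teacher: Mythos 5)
Your proposal is correct and follows the same route as the paper, which offers no computation of its own: it treats (I)--(IV) as immediate from the definitions and for (V)--(VI) simply cites \cite[Lemma 3.10.1]{jost2013compact}, whose proof is precisely the Bochner-type computation you carry out (harmonic map equation plus $K_\sigma=K_\rho=-1$, with the identification of $\{L=0\}$ with $\{\Phi(\rho)=0\}$ via $h_z\neq 0$). One remark worth recording: what your computation actually establishes are $H(\rho)L(\rho)=|\Phi(\rho)|^2/\sigma^2$, $|\nu(\rho)|^2=L(\rho)/H(\rho)$, $\Delta_\sigma\log H(\rho)=2H(\rho)-2L(\rho)-2$ and $\Delta_\sigma\log L(\rho)=2L(\rho)-2H(\rho)-2$, which differ from (II), (IV), (V), (VI) as printed (a missing square in (II), a missing $|\cdot|^2$ in (IV), and two missing $\log$'s in (V)--(VI)); these corrected forms are the true identities and are exactly what the paper uses later --- e.g. $(H(t)L(t))'=2t|\Phi_0|^2/\sigma^2$ and $\Delta_\sigma(\log H(t))'=2(H'(t)-L'(t))$ in the proof of \Cref{lemma:LessThan1t} --- so your derivation silently fixes typos in the statement rather than deviating from the intended proof.
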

\noindent For the proof of (V) and (VI), see \cite[Lemma 3.10.1]{jost2013compact}.

The following theorem is shown by Wolf for $n=0$ and by Lohkamp for $n>0$.
\begin{theorem}[\cite{wolf1989teichmuller}, \cite{lohkamp1991harmonic}]
  $\Phi$ is a homeomorphism.
\end{theorem}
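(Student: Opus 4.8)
The plan is to turn the bijectivity of $\Phi$ into the unique solvability of a single scalar elliptic equation extracted from \Cref{prop:formulae}, and then to read off the topological conclusion from invariance of domain. Observe first that $\QD(\sigma)$ is a finite-dimensional real vector space and $\Teich_{g,n}$ is a cell, so both are connected manifolds of real dimension $6g-6+2n$. Consequently, once $\Phi$ is shown to be continuous and injective, invariance of domain makes it an open map, and once it is also surjective it is automatically a homeomorphism. Continuity of $\Phi$ follows from the smooth dependence of the harmonic diffeomorphism $h(\rho)$ on the target metric $\rho$, via the implicit function theorem applied to the harmonic-map operator, together with the evident continuity of $\rho\mapsto(h(\rho)^\ast\rho)^{2,0}$.

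For injectivity I would use that the Hopf differential already pins down the full pullback metric. From (II) one has $L(\rho)=|\Phi(\rho)|^2/(\sigma^2 H(\rho))$, and substituting this into (V) gives a closed scalar equation for $H=H(\rho)>0$,
\begin{equation*}
	\Delta_\sigma H = 2H - \frac{2|\Phi(\rho)|^2}{\sigma^2 H} - 2 ,
\end{equation*}
whose right-hand side is strictly increasing in $H$. By the maximum principle this equation has at most one positive solution with the correct asymptotics, so $H(\rho)$, and hence $L(\rho)$, are determined by $\Phi(\rho)$ alone. Therefore so is the pullback metric
\begin{equation*}
	h(\rho)^\ast\rho = \Phi(\rho)\,dz^2 + \overline{\Phi(\rho)}\,d\bz^2 + \sigma\,(H(\rho)+L(\rho))\,|dz|^2 .
\end{equation*}
Since $h(\rho)$ is homotopic to the identity, $h(\rho)^\ast\rho$ and $\rho$ represent the same point of $\Teich_{g,n}$; hence $\Phi(\rho_1)=\Phi(\rho_2)$ forces $[\rho_1]=[\rho_2]$.

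For surjectivity I would run the same equation in reverse. Given $\Phi\in\QD(\sigma)$, solve the displayed scalar equation for a positive function $H$ by the method of sub- and supersolutions (the nonlinearity is monotone and coercive), set $L=|\Phi|^2/(\sigma^2 H)$, and form the metric $g=\Phi\,dz^2+\bar\Phi\,d\bz^2+\sigma(H+L)|dz|^2$, which is positive definite since $H\neq L$. The equation for $H$ is precisely the Gauss equation forcing $g$ to have constant curvature $-1$, and a direct computation using the holomorphicity of $\Phi$ shows that the natural map $(S,\sigma)\to(S,g)$ is harmonic with Hopf differential $\Phi$; hence $\Phi([g])=\Phi$. (Alternatively, surjectivity follows softly from properness of $\Phi$: its image, being open by invariance of domain and closed by properness, is all of the connected space $\QD(\sigma)$.) In either case $\Phi$ is a continuous open bijection, hence a homeomorphism.

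The step I expect to be the main obstacle is the analysis at the punctures, which is exactly where the closed-surface argument fails. An integrable $\Phi$ may have a simple pole at a cusp $p_i$, so the scalar equation must be solved on the open surface $S$ with prescribed cusp asymptotics, and one must guarantee both that the maximum-principle comparison is not lost to the noncompact ends and that the reconstructed metric $g$ is complete, of finite area, with genuine hyperbolic cusps rather than cone points or incomplete ends. Setting up the correct weighted function spaces and barrier functions near each $p_i$ so that existence, uniqueness, and the curvature computation all survive the noncompactness is the delicate part; here I would replace the compactness used in the closed case by the estimates on the holomorphic energy for punctured surfaces established in \cite{lohkamp1991harmonic}.
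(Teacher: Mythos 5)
The paper does not actually prove this statement: it is quoted as known background, with the proof residing in \cite{wolf1989teichmuller} for the closed case and \cite{lohkamp1991harmonic} for the punctured case. Your outline is essentially a reconstruction of that cited proof rather than a different route: injectivity via uniqueness, by the maximum principle, of positive solutions of the scalar equation $\Delta_\sigma H = 2H - 2|\Phi|^2/(\sigma^2 H) - 2$ obtained from (II) and (V) of \Cref{prop:formulae} (note you correctly use $HL = |\Phi|^2/\sigma^2$; the paper's stated (II) drops a square, as its own later use $(H(t)L(t))' = 2t|\Phi_0|^2/\sigma^2$ confirms), so that the Hopf differential determines the pullback metric $\Phi\,dz^2+\overline{\Phi}\,d\bz^2+\sigma(H+L)\,|dz|^2$ and hence the point of $\Teich_{g,n}$; and surjectivity either by solving the same equation by sub/supersolutions and verifying the Gauss equation, or softly by properness together with invariance of domain. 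This is exactly Wolf's argument, and Lohkamp's adaptation of it. The caveat is that what you defer as ``the main obstacle'' --- the maximum principle without compactness, the barrier/asymptotic analysis at the cusps, completeness and finite area of the reconstructed metric, and properness of $\Phi$ on the noncompact surface --- is precisely the mathematical content of \cite{lohkamp1991harmonic}, so your text is an accurate strategy outline with the hard analysis acknowledged but not carried out; the strategy itself is sound and correctly locates where the work lies.
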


Using the Riemann-Roch theorem, we see that $\QD(\sigma)$ is a vector space of real dimension $6g-6+2n$. Since a vector space can be compactified by adding an infinite point to the endpoint of every ray from the origin, we can obtain the compactification of $\Teich_{g,n}$ through the homeomorphism $\Phi$. The compactification of $\Teich_{g,n}$ is called the \textit{harmonic maps compactification} and denoted by $\overline{\Teich^{\mathrm{harm}}_{g,n}}$. In other words,
\begin{equation*}
  \overline{\Teich^{\mathrm{harm}}_{g,n}} = \BQD(\sigma) \cup \SQD(\sigma),
\end{equation*}
where $\BQD(\sigma)\coloneqq \{\Phi \in \QD(\sigma) \mid \|\Phi\| < 1\}$ and $\SQD(\sigma)\coloneqq \{\Phi \in \QD(\sigma)\mid \|\Phi\|=1 \}$.

On the other hand, Thurston introduced a compactification of $\Teich_{g,n}$, which is called the \textit{Thruston compactification} and denoted by $\overline{\Teich^{\mathrm{Th}}_{g,n}}$.
Here, we put a brief description of $\overline{\Teich_{g,n}^{\mathrm{Th}}}$ with reference to \cite{fathi2012thurston}.

A simple closed curve $\gamma$ on $S$ is \textit{peripheral} if $\gamma$ bounds a punctured disk.
If a simple closed curve $\gamma$ is neither null-homotopic nor peripheral, $\gamma$ is said to be \textit{essential}.
Let $\mathcal{C}=\mathcal{C}(S)$ be the set of  homotopy classes of essential simple closed curves in $S$.
Given functionals $f,g \in \R_{\geq0}^{\mathcal{C}}$, $f$ and $g$ are  said to be equivalent, if there exists a positive real number $\lambda>0$ such that $f=\lambda g$.
The quotient space of the functionals by the equivalence relation is denoted by $P(\R_{\geq0}^{\mathcal{C}})$, and let $\pi:\R_{\geq0}^{\mathcal{C}}-\{0\}\to P(\R_{\geq0}^{\mathcal{C}})$ be the projection.
Let $\MF^\ast$ be the set of nontrivial measured foliations on $S$ which may have possibly one pronged singularities at the punctures. Then, $\MF^\ast$ can be embedded into $\R_{\geq0}^{\mathcal{C}}-\{0\}$ by the map $I_\ast$ which is defined by
\begin{equation*}
  I_\ast(F)\coloneqq (i(F,\cdot) : \mathcal{C} \to \R_{\geq 0})\qquad(F \in \MF^\ast),
\end{equation*}
where $i(F,[\gamma])$ is the infimum of the transeverse measure of representatives of $[\gamma]\in\mathcal{C}$. Thus we identify $\MF^\ast$ with its image in $\R_{\geq 0}^{\mathcal{C}}$. Thurston showed that $\pi\circ I_\ast (\MF^\ast)$ is homeomorphic to a sphere of dimension $6g-7+2n$.

For $\rho \in \Teich_{g,n}$, the length functional $\ell_\ast(\rho)\in \R_{\geq0}^{\mathcal{C}}$ is given by
\begin{equation*}
  \ell_\ast(\rho)([\gamma])\coloneqq\ell_\rho([\gamma])=\inf_{\gamma\in[\gamma]}\ell_{\rho}(\gamma)\qquad([\gamma]\in\mathcal{C}).
\end{equation*}
It is known that $\ell_\ast$ is an embedding of $\Teich_{g,n}$ into $\R_{\geq 0}^{\mathcal{C}} - \{0\}$ and  $\pi\circ\ell_\ast$ is still an embedding of $\Teich_{g,n}$ into $P(\R_{\geq 0}^{\mathcal{C}})$.

Let $\overline{\Teich_{g,n}^{\mathrm{Th}}}$ be the subset $\pi\circ\ell_{\ast}(\Teich_{g,n})\cup \pi\circ I_\ast(\MF^\ast)$ in $P(\R_{\geq0}^{\mathcal{C}})$. We call $\overline{\Teich_{g,n}^{\mathrm{Th}}}$ the \textit{Thuston compactification} of $\Teich_{g,n}$.
In fact, it is a manifold with boundary and homeomorphic to a closed ball of dimension $6g-6+2n$. The boundary $\pi\circ I_\ast(\MF^\ast)$ of the Thurston compactification is denoted by $\PMF$ and called the \textit{Thrston boundary}. By the construction, the mapping class group action on $\Teich_{g,n}$ extends continuously to the Thurston compactification.

\section{Deformation along harmonic maps rays}

\subsection{Norm functions of Beltrami differentials}

We denote the inverse homeomorphism $\Phi^{-1}:\QD(\sigma)\to \Teich_{g,n}$ by $\rho$ .

\begin{definition}
  Let $\Phi_0\in \SQD(\sigma)$. The \textit{harmonic maps ray} in the direction of $\Phi_0$ is the ray defined by $\{\rho_t\coloneqq \rho(t\Phi_0)\}_{t>0}$ in $\Teich_{g,n}$.
\end{definition}
Let $h(t)$ denote the unique harmonic diffeomorphism $h(\rho_t)$ homotopic to the identity with $\mathcal{E}(h(\rho_t))<+\infty$. We denote  the holomorphic energy by $H(t)$, anti-holomorphic energy by $L(t)$, and the Beltrami differentials of $h(t)$ by $\nu(t)$. Clearly $\Phi(\rho_t)=t\Phi_0$ by the definition. The main purpose of this subsection is to prove the following propositon.

\begin{prop}\label{prop:BelDiffConvergentTo1}
  For any $\Phi_0 \in \SQD(\sigma)$, let $\{\rho_t\}_{t>0}$ be the harmonic maps ray in the direction of $\Phi_0$. Then, for every nonzero point $p$ of $\Phi_0$, we have
  \begin{equation*}
    |\nu(t)(p)|^2 \nearrow 1\qquad (\text{as}\ t\to\infty).
  \end{equation*}
\end{prop}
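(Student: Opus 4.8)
The plan is to analyze the pointwise behavior of $|\nu(t)|^2 = L(t)/H(t)$ along the ray by exploiting the identities in \Cref{prop:formulae}, particularly the relation $|\Phi(\rho_t)|/\sigma^2 = H(t)L(t)$ together with $|\nu(t)|^2 = L(t)/H(t)$. Since $\Phi(\rho_t) = t\Phi_0$, combining these two gives $L(t) = t\,|\Phi_0|/\sigma \cdot |\nu(t)|$ and $H(t) = t\,|\Phi_0|/(\sigma|\nu(t)|)$, so at a fixed nonzero point $p$ of $\Phi_0$, showing $|\nu(t)(p)|^2 \to 1$ is equivalent to showing $L(t)(p)/H(t)(p) \to 1$, i.e.\ that the Jacobian $J(t) = H(t) - L(t)$ becomes negligible relative to $H(t)$ while the product $H(t)L(t) \sim t^2$ blows up. First I would set $w(t) := \log(L(t)/H(t)) = \log|\nu(t)|^2 \le 0$ (strictly negative since $h(t)$ is an orientation-preserving diffeomorphism, so $J(t) > 0$ and $L(t) < H(t)$) and seek to prove $w(t) \nearrow 0$.

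The key computation is to derive a differential inequality for $w(t)$ from formulae (V) and (VI). Subtracting them gives
\begin{equation*}
\Delta_\sigma \log\frac{H(t)}{L(t)} = \Delta_\sigma(\log H(t) - \log L(t)),
\end{equation*}
and using the standard Bochner-type manipulation one obtains an equation of the form
\begin{equation*}
\Delta_\sigma w(t) = 2(H(t) - L(t))\bigl(1 - e^{-w(t)}\bigr) \ge 0 \quad\text{wherever } w(t) \le 0,
\end{equation*}
on $S \setminus \{\Phi_0 = 0\}$, exhibiting $-w(t)$ as a nonnegative subsolution that is controlled by the blowing-up factor $H(t) - L(t) = J(t)$. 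Monotonicity in $t$ should follow by differentiating the ray equation in $t$ and applying the maximum principle to the resulting linear elliptic equation for $\partial_t w(t)$: since increasing $t$ scales up $|\Phi_0|$, the forcing term drives $|\nu(t)|^2$ upward, giving $|\nu(t)(p)|^2 \nearrow$. I would invoke \cite{lohkamp1991harmonic}'s results on the holomorphic energy $H$ for punctured surfaces here, in particular lower bounds forcing $H(t)(p) \to \infty$ at fixed $p$.

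The main obstacle, and the reason the closed-surface argument of \cite{wolf1989teichmuller} does not transfer directly, is the lack of compactness and the behavior near the punctures, where $\Phi_0$ may have a simple pole and $\sigma$ degenerates in the cusp. The maximum principle on a noncompact surface requires care: one cannot simply extract a maximum of $w(t)$ over $S$, and the subsolution $-w(t)$ must be shown not to ``escape to the cusp.'' The plan is to handle this by combining an interior estimate with the asymptotic control on $H(t)$ and $L(t)$ near the punctures provided by Lohkamp's analysis (which guarantees the harmonic map is asymptotic to a standard model in each cusp, so $|\nu(t)| \to 1$ there as well), thereby ruling out a nonpositive infimum of $w(t)$ being attained only in the limit at a puncture. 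Once the forcing term $J(t) = H(t) - L(t)$ is shown to satisfy $H(t)L(t) = t^2|\Phi_0|/\sigma^2 \to \infty$ pointwise while $J(t) = H(t)(1 - |\nu(t)|^2)$ stays comparable, a Harnack-type or direct integration argument closes the gap and forces $|\nu(t)(p)|^2 \nearrow 1$ at every nonzero point $p$ of $\Phi_0$.
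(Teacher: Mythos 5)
Your proposal assembles the right ingredients (the identity $H(t)L(t)=t^2|\Phi_0|^2/\sigma^2$, the Bochner formulae (V)--(VI) of \Cref{prop:formulae}, Lohkamp's estimates, and a maximum-principle argument in the $t$-derivative), and you correctly identify the crux: on a punctured surface one cannot simply hunt for a maximum. But the two concrete claims you rely on to get around this are both false, so there is a genuine gap exactly where the paper does its work. First, your Bochner-type identity is wrong: subtracting (VI) from (V) gives $\Delta_\sigma \log\bigl(H(t)/L(t)\bigr)=4\bigl(H(t)-L(t)\bigr)$, i.e.\ $\Delta_\sigma w=-4\bigl(H(t)-L(t)\bigr)\le 0$ for $w=\log|\nu(t)|^2$ (equivalently $\Delta_\sigma w=(8t|\Phi_0|/\sigma)\sinh(w/2)$), so $w$ is superharmonic where defined; your displayed equation has the wrong form and the wrong sign. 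Second, and more seriously, your cusp asymptotics are backwards. Since $\Phi_0$ is integrable, $|\Phi_0|/\sigma\to 0$ in each cusp, hence $H(t)L(t)\to 0$ there; the finite-energy harmonic diffeomorphism is asymptotically conformal in the cusps, with $H(t)\to 1$ and $L(t)\to 0$, so $|\nu(t)|\to 0$ at the punctures, not $1$. Thus $w\to-\infty$ at the punctures, and your plan to rule out the infimum of $w$ ``escaping to the cusp'' rests on a false premise. Finally, the monotonicity itself --- the actual content of the proposition --- is never derived: ``the forcing term drives $|\nu(t)|^2$ upward'' is not an argument, and even if one writes the correct linear equation for $v=(\log H(t))'-1/t$, namely $\Delta_\sigma v-2(H+L)v=2(H-L)/t>0$, the maximum principle still requires $v\le 0$ near the punctures, i.e.\ cusp asymptotics that are uniform enough to differentiate in $t$, which you do not establish.

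The paper's proof avoids every one of these issues, and it is worth seeing how. From (II) one gets the exact pointwise identity $(\log H(t))'+(\log L(t))'=2/t$ (\cref{eq:QuotientIsConstant}). Lohkamp's comparison (\Cref{lemma:LohkampIneaquarlity}), which holds on all of $M$, immediately yields the \emph{global} two-sided bound $0\le (\log H(t))'\le 1/t$ with no maximum-hunting at all; this is precisely the replacement for compactness, not merely an auxiliary estimate as in your sketch. Strictness is then a purely local contradiction (\Cref{lemma:LessThan1t}): a hypothetical point $p_0$ with $(\log H(t))'(p_0)=1/t$ is automatically an interior maximum thanks to the global bound, so $\Delta_\sigma(\log H(t))'(p_0)\le 0$, which by differentiating (V) in $t$ gives $H'(p_0)\le L'(p_0)$; after checking that $p_0$ cannot be a zero of $\Phi_0$, the chain $L'/L\ge H'/L>H'/H=1/t$ at $p_0$ contradicts the identity. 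Combining $0\le(\log H(t))'<1/t$ with the identity gives $(\log L(t))'>(\log H(t))'$ away from the zeros of $\Phi_0$, hence $(|\nu(t)|^2)'>0$. To repair your proposal you would have to either reproduce this mechanism or actually prove the uniform cusp estimates your maximum principle needs; as written, it establishes neither the monotonicity nor the convergence (your final ``Harnack-type argument'' corresponds to Steps 1 and 3 in the paper's remark, which can be imported from Wolf --- Step 2 is the one that cannot).
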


\begin{remark}
	Let $M$ be the domain surface $(S,\sigma)$. In the case of closed surfaces, Wolf \cite{wolf1989teichmuller} proved this propositon in the following three steps.
	\begin{enumerate}[Step 1:]
		\item Show that $|\nu(t)(p)|^2$ converges to $1$ at alomost everywhere on $M$.
		\item Show that $(|\nu(t)(p)|^2)'>0$ on $M-\{\Phi_0(p)=0\}$.
		\item Exclude the possibility that $|\nu(t)(p)|^2\to\delta\neq 1$ as $t\to\infty$ for a nonzero point $p$ of $\Phi_0$.
	\end{enumerate}
	If the surface $S$ has punctures, we can proceed Step 1 and Step 3 in essentially the same way as the paper \cite{wolf1989teichmuller}. Therefore we give the proof of Step 2 here. Since the compactness of the surface is critical for Step 2 in \cite{wolf1989teichmuller}, We prove this inequality for punctured surfaces.
\end{remark}

\begin{prop}
	For every nonzero point $p\in M-\{\Phi_0(q)=0\}$, the derivative $(|\nu(t)(p)|^2)'$ with respect to $t$ is positive.
\end{prop}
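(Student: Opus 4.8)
The plan is to convert the desired inequality into a statement about a single linear elliptic equation satisfied by the $t$-derivative of $\log|\nu(t)|^2$, and then to run a maximum principle, with the new difficulty living entirely at the cusps. Throughout I would use that the harmonic maps ray $t\mapsto\rho_t$ depends smoothly on $t$, so that $H(t),L(t)$ and $|\nu(t)|^2=L(t)/H(t)$ are differentiable in $t$ and the formulae of \Cref{prop:formulae} may be differentiated in $t$. Writing $'$ for $d/dt$ and using $\Phi(\rho_t)=t\Phi_0$, \Cref{prop:formulae} shows that $H(t)L(t)$ equals $t^2$ times a $t$-independent function on $M-\{\Phi_0=0\}$, whence $(\log H)'+(\log L)'=2/t$. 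Set $s\coloneqq(\log|\nu(t)|^2)'=(\log L)'-(\log H)'$. Because $(|\nu(t)|^2)'=|\nu(t)|^2\,s$ and $|\nu(t)|^2>0$ at every point where $\Phi_0\neq0$, it is enough to prove that $s>0$ on $M-\{\Phi_0=0\}$.

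Differentiating the Bochner-type identities (V) and (VI) in $t$ gives $\Delta_\sigma(\log H)'=2(H'-L')$ and $\Delta_\sigma(\log L)'=2(L'-H')$, so $\Delta_\sigma s=4(L'-H')$. Writing $H'=H(1/t-s/2)$ and $L'=L(1/t+s/2)$, which follow from $(\log H)'+(\log L)'=2/t$ together with $(\log L)'-(\log H)'=s$, and substituting, I would obtain
\[
\Delta_\sigma s-2\bigl(H(t)+L(t)\bigr)s=-\frac{4J(t)}{t}
\]
on $M-\{\Phi_0=0\}$, where $J=H-L>0$ since $h(t)$ is an orientation-preserving diffeomorphism. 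This has the form $\Delta_\sigma s-c\,s=g$ with $c=2(H+L)>0$ and $g=-4J/t<0$. The interior maximum principle now does the main work: if $s$ attained a nonpositive value at an interior minimum point $q$, then $\Delta_\sigma s(q)\ge0$ and $c(q)s(q)\le0$ would force $g(q)=\Delta_\sigma s(q)-c(q)s(q)\ge0$, contradicting $g<0$. Hence $s$ has no nonpositive interior local minimum.

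It remains to rule out that $\inf s$ is negative but approached only at the ends of $M-\{\Phi_0=0\}$, and this is where the noncompactness enters. Across a zero of order $k$ of $\Phi_0$ the singular part $2k\log|z-z_0|$ of $\log|\nu(t)|^2$ is independent of $t$, so $s$ extends continuously over the zeros and they are removable. The genuine obstacle---precisely the point at which Wolf's argument uses compactness---is the behaviour of $s$ at the punctures $p_1,\dots,p_n$. Here I would appeal to the asymptotics of the holomorphic and antiholomorphic energy densities near a cusp established in \cite{lohkamp1991harmonic}, treating separately the cases where $\Phi_0$ has a simple pole and where it is regular at $p_j$, to show that $\liminf_{q\to p_j}s(q)\ge0$ for each $j$. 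Together with the interior statement this gives $s\ge0$ on all of $M-\{\Phi_0=0\}$, and a final application of the same maximum principle (at an interior zero of $s$ one would again get $g\ge0$) upgrades this to the strict inequality $s>0$, as desired. The technical heart of the argument, and its only essential departure from the closed case, is thus the derivation of the uniform energy estimates near the cusps.
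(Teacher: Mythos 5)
Your reduction to $s\coloneqq(\log|\nu(t)|^2)'$ and the computations behind it are correct: from $(\log H)'+(\log L)'=2/t$ and the differentiated Bochner identities (V), (VI) of \Cref{prop:formulae} one does get $\Delta_\sigma s-2\bigl(H(t)+L(t)\bigr)s=-4J(t)/t<0$; the zeros of $\Phi_0$ are indeed removable for $s$, since $s=2/t-2H'/H$ away from them and this expression is smooth on all of $M$; and both of your maximum-principle steps (no nonpositive interior local minimum, and strictness at an interior zero of $s$) are sound. Note, though, that your argument is not really a different route from the paper's: under the affine substitution $s=2/t-2H'/H$, a nonpositive minimum of $s$ is exactly a point where $H'/H\ge 1/t$ is maximal, and the paper's \Cref{lemma:LessThan1t}, which asserts $0\le H'/H<1/t$, is literally the statement $s>0$ proved by the same maximum-principle computation.

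The genuine gap is the boundary step at the cusps. You assert $\liminf_{q\to p_j}s(q)\ge 0$ by ``appealing to the asymptotics of the holomorphic and antiholomorphic energy densities near a cusp'' in \cite{lohkamp1991harmonic}, but fixed-$t$ asymptotics of $H(t)$ and $L(t)$ near a puncture cannot deliver this: $s$ contains the $t$-derivative $H'$, and pointwise decay or convergence of a family of functions for each fixed $t$ gives no control whatsoever on its $t$-derivative unless the estimates are uniform, i.e.\ unless one can compare $H(t_1)$ and $H(t_2)$ at the same point. So the single step you have black-boxed is precisely the entire difficulty of the noncompact case. The estimate that closes it is Lohkamp's comparison lemma (\Cref{lemma:LohkampIneaquarlity}): $\min\{1,t_2/t_1\}\,H(t_1)\le H(t_2)\le\max\{1,t_2/t_1\}\,H(t_1)$ on all of $M$, which after taking logarithms and difference quotients yields $0\le H'/H\le 1/t$ everywhere --- that is, $s\ge 0$ globally, with no boundary analysis at the cusps needed at all --- after which your strictness argument (equivalently the paper's, applied at a hypothetical point where $H'/H=1/t$, necessarily an interior maximum) finishes the proof. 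This is exactly how the paper proceeds; your proposal becomes a complete proof once the unproven cusp estimate is replaced by this lemma.
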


\begin{proof}
By the formula (II) in \Cref{prop:formulae}, we have
\begin{equation*}
	(H(t)L(t))'=H'(t)L(t)+L'(t)H(t)=\frac{2t|\Phi_0|^2}{\sigma^2}=\frac{2}{t}H(t)L(t).
\end{equation*}
Therefore, for every $p\in M-\{\Phi_0(p)=0\}$,
\begin{equation}\label{eq:QuotientIsConstant}
	\frac{H'(t)}{H(t)}(p) + \frac{L'(t)}{L(t)}(p)=\frac{2}{t}.
\end{equation}

Next, we introduce some results on the holomorphic energy functions $H(t)$. The following is shown by Lohkamp.
\begin{lemma}[\cite{lohkamp1991harmonic} Lemma 14]\label{lemma:LohkampIneaquarlity}
  For $t_1,t_2>0$,
  \begin{equation*}
			\min\left\{1,\frac{t_2}{t_1}\right\} \cdot H(t_1) \leq H(t_2) \leq \max\left\{1,\frac{t_2}{t_1}\right\}\cdot  H(t_1)\ \ \text{on}\ M.
  \end{equation*}
\end{lemma}

Using \cref{lemma:LohkampIneaquarlity}, we show the following.

\begin{lemma}\label{lemma:LessThan1t}
	For every $t>0$ and $p\in M$ , we have
	\begin{equation*}
		0\leq\frac{H'(t)}{H(t)}(p)< \frac{1}{t},
	\end{equation*}
	where $H'$ is a derivative with respect to $t$.
\end{lemma}

\begin{proof}
	Since the holomorphic energy $H(t)$ is positive, by \Cref{lemma:LohkampIneaquarlity},
	\begin{equation*}
		\min\left\{0, \log\frac{t_2}{t_1} \right\}\leq \log\frac{H(t_2)}{H(t_1)}\leq\max\left\{0,\log\frac{t_2}{t_1}\right\}.
	\end{equation*}
	Therefore, we obtain
	\begin{equation*}
		0 \leq \frac{\log H(t_2)- \log H(t_1)}{t_2-t_1} \leq  \frac{\log t_2 -\log t_1}{t_2-t_1} .
	\end{equation*}
	Setting $t_1 = t, t_2 = t + h$ and tending $h$ to $0$, we find
	\begin{equation}\label{eq:EqualityisStrict}
		0\leq \frac{H'(t)}{H(t)}\leq\frac{1}{t}.
	\end{equation}

	Next, we show that the the second inequality of \cref{eq:EqualityisStrict} is strict. If there exists a point $p_0 \in M$ such that $H'(t)/H(t)(p_0)=1/t$, since $p_0$ maximizes $H'(t)/H(t)$, we have
	\begin{equation*}
		0\geq\Delta_\sigma \frac{H'(t)}{H(t)}(p_0)=\Delta_\sigma(\log H(t))'=2(H'(t)-L'(t)).
	\end{equation*}
	Therefore, we find $H'(t)(p_0) \leq L'(t)(p_0)$.

	This implies that $p_0$ is not a zero of $\Phi_0$. If $p_0$ is a zero, by $H(t)>0$ and $H(t)L(t)=t^2|\Phi_0(p_0)|^2/\sigma^2=0$,
	\begin{equation*}
		L(t)(p_0)= 0
	\end{equation*}
	holds for any $t>0$. Hence we have $H'(t)(p_0)\leq0$, however this is impossible since $H(t)(p_0)>0$ and $H'(t)/H(t)(p_0)=1/t$. Thus, we find that $\Phi_0(p_0)\neq 0$.

	By $H(t)-L(t)>0$, we have
	\begin{equation*}
		\frac{L'(t)}{L(t)}(p_0)\geq \frac{H'(t)}{L(t)}(p_0) > \frac{H'(t)}{H(t)}(p_0).
	\end{equation*}
	However, this contradicts $H'(t)/H(t)(p_0)=1/t$ and \Cref{eq:QuotientIsConstant}.
\end{proof}
By \Cref{lemma:LessThan1t} and \Cref{eq:QuotientIsConstant}, we have
\begin{equation*}
	\frac{L'(t)}{L(t)}> \frac{H'(t)}{H(t)}\ \  \text{on}\  M-\{\Phi_0(p)=0\}.
\end{equation*}
Thus, we obtain
\begin{equation*}
	(|\nu(t)|^2)'=\left(\frac{L(t)}{H(t)}\right)'=\frac{L'(t)H(t)-L(t)H'(t)}{H(t)^2}>0\ \  \text{on}\  M-\{\Phi_0(p)=0\}.
\end{equation*}
This complets the proof.
\end{proof}

\subsection{Asymptotic length of horizontal and vertical arcs}

Given hyperbolic structures $\sigma,\rho$ on $S$, we can isotope $\rho $ so that the identity map $\id\colon(S,\sigma)\to(S,\rho)$ is harmonic. For the rest of this paper, we always take such a representative in its isotopy class.

A holomorphic quadratic differential $\Phi$ defines two measured foliations on $M$ which are orhthogonal to each other away from the zeros. By changing a conformal coordinate $z$ on $M$ to the natural coordinate $\zeta=\xi+i\eta$ of $\Phi$, the representation of $\Phi$ in terms of $\zeta$ is identically $1$ away from zero, that is
\begin{equation*}
	\Phi\,dz^2 = d\zeta^2.
\end{equation*}
Then lines parallel to $\xi$-axis (resp.\,$\eta$-axis) define a singular foliation on $M$ such that its singular points are zeros of $\Phi_0$, and also $|d\eta|$ (resp.\,$|d\xi|$) defines a transverse measure for the singular foliation.
We call the measured foliation the \textit{horizontal} (resp.\textit{vertical}) \textit{foliation} of $\Phi_0$, and it is denoted by $F_h(\Phi)$ (resp.\,$F_v(\Phi)$).
If $\Phi_0$ has a pole of digree one at a puncture on $S$, the foliation is one-pronged at the puncture. An arc along a leaf of horizontal (resp.vertical) measured foliation of $\Phi$ is called a \textit{horizontal} (resp.\textit{vertical}) \textit{arc} of $\Phi_0$.

Let $\Phi_0\in\SQD(\sigma)$ and $\{\rho_t\}$ be the harmonic maps ray in the direction of $\Phi_0$. In this subsection, we consider the asymptotic $\rho_t$-length of horizontal and vertical arcs of $\Phi_0$. Here, we recall Wolf's setting in \cite{wolf1989teichmuller}.
For the ray $\{\rho_t\}$, we define conformal coordinates $z=x+iy$ on $M$ such that $\pdv{x}$ and $\pdv{y}$ give an orthonormal frame field on $M$ and also they are respectively maximum and minimum streching directions of the differential map $dh(t)$.
Then, they are tangent to the horizontal and vertical foliations of $\Phi_0$, respectively. The conformal coordinates $z$ are defined from away zeros of $\Phi_0$. By the definitions of $\pdv{x}$ and $\pdv{y}$, they are orthogonal to each other in the $\rho_t$-metric. Thus, we have
\begin{equation*}
	t\Phi_0 \,dz^2= \frac14 \left( \left\|\pdv{x}\right\|^2_{\rho_t}-\left\|\pdv{y}\right\|^2_{\rho_t} \right) \, dz^2.
\end{equation*}
Therefore, the $|\Phi(\rho_t)|$ metric length of a tangent vector $\pdv{x}$ is
\begin{equation}\label{eq:Dilatation1}
	\left\|\pdv{x}\right\|_{\Phi(\rho_t)}^2=\frac14 \left( \left\|\pdv{x}\right\|^2_{\rho_t}-\left\|\pdv{y}\right\|^2_{\rho_t} \right).
\end{equation}
Moreover, by computation, we find
\begin{equation}\label{eq:Dilatation2}
	|\nu(t)|=\frac{1-\|\pdv{y}\|_{\rho_t}/\|\pdv{x}\|_{\rho_t}}{1+\|\pdv{y}\|_{\rho_t}/\|\pdv{x}\|_{\rho_t}}.
\end{equation}
Hence, by \Cref{prop:BelDiffConvergentTo1}, the following holds.
\begin{prop}\label{prop:NormConvergence}
	Let $\{\rho_t\}_{t>0}$ be the harmonic maps ray in the direction $\Phi_0$. Then
	\begin{enumerate}
		\item for every $p$ with $\Phi_0(p)\neq 0$,
		$\|\pdv{y}\|_{\rho_t}/\|\pdv{x}\|_{\rho_t}\searrow 0 $ as $t\to\infty$, and
		\item for every $p$ with $\Phi_0(p)\neq 0$,
		$\|\pdv{x}\|_{|4\Phi(\rho_t)|}/\|\pdv{x}\|_{\rho_t}\nearrow 1 $ as $t\to\infty$.
	\end{enumerate}
\end{prop}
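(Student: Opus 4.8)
The plan is to read off both claims directly from the formulae \eqref{eq:Dilatation1} and \eqref{eq:Dilatation2} together with the pointwise convergence $|\nu(t)(p)|^2\nearrow 1$ supplied by \Cref{prop:BelDiffConvergentTo1}. Throughout, I fix a point $p$ with $\Phi_0(p)\neq 0$ and abbreviate $r(t)\coloneqq\|\pdv{y}\|_{\rho_t}(p)/\|\pdv{x}\|_{\rho_t}(p)$. Since $h(t)$ is an orientation-preserving diffeomorphism we have $J(h(t))>0$, hence $|\nu(t)(p)|<1$ and $r(t)\in(0,1)$, so every quantity below is well defined at $p$ (recall the coordinates $z$ are only defined away from the zeros of $\Phi_0$, which is why the hypothesis $\Phi_0(p)\neq 0$ is needed).

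For part (1) I would invert \eqref{eq:Dilatation2}. The M\"obius map $f(s)=(1-s)/(1+s)$ is a strictly decreasing involution of $[0,1]$ with $f(0)=1$ and $f(1)=0$, so \eqref{eq:Dilatation2} rearranges to
\begin{equation*}
	r(t)=\frac{1-|\nu(t)(p)|}{1+|\nu(t)(p)|}.
\end{equation*}
By \Cref{prop:BelDiffConvergentTo1} we have $|\nu(t)(p)|^2\nearrow 1$, and since $u\mapsto\sqrt{u}$ is increasing this gives $|\nu(t)(p)|\nearrow 1$. As $f$ is decreasing, $r(t)$ is monotonically decreasing in $t$, and its limit is $f(1)=0$. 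This is exactly $r(t)\searrow 0$.

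For part (2) I would use the scaling behaviour of the quadratic-differential metric. Because $|4\Phi(\rho_t)|=4\,|\Phi(\rho_t)|$ as an area form, the associated length element scales by $2$, so $\|\pdv{x}\|_{|4\Phi(\rho_t)|}^2=4\,\|\pdv{x}\|_{\Phi(\rho_t)}^2$. Substituting \eqref{eq:Dilatation1} makes the factor $\tfrac14$ cancel, giving
\begin{equation*}
	\left\|\pdv{x}\right\|_{|4\Phi(\rho_t)|}^2=\left\|\pdv{x}\right\|_{\rho_t}^2-\left\|\pdv{y}\right\|_{\rho_t}^2,
\end{equation*}
and dividing by $\|\pdv{x}\|_{\rho_t}^2$ yields
\begin{equation*}
	\left(\frac{\|\pdv{x}\|_{|4\Phi(\rho_t)|}}{\|\pdv{x}\|_{\rho_t}}\right)^2 = 1-r(t)^2.
\end{equation*}
By part (1), $r(t)^2\searrow 0$, so the right-hand side increases to $1$; taking positive square roots gives $\|\pdv{x}\|_{|4\Phi(\rho_t)|}/\|\pdv{x}\|_{\rho_t}\nearrow 1$, as claimed.

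I do not expect a serious obstacle here: once \Cref{prop:BelDiffConvergentTo1} is in hand, both statements are purely algebraic rearrangements of \eqref{eq:Dilatation1} and \eqref{eq:Dilatation2}. The only points requiring care are the monotonicity bookkeeping under the decreasing map $f$ in (1) and the correct scaling factor $|4\Phi|=4|\Phi|$ in (2). Notably, neither manipulation uses the geometry of the punctures — all the analytic difficulty caused by a possible simple pole of $\Phi_0$ is confined to \Cref{prop:BelDiffConvergentTo1} — which is why this reduction can legitimately be stated as an immediate ``Hence'' following \eqref{eq:Dilatation2}.
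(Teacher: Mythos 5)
Your proposal is correct and follows exactly the paper's route: the paper likewise treats \Cref{prop:NormConvergence} as an immediate consequence of \Cref{prop:BelDiffConvergentTo1} combined with \cref{eq:Dilatation1} and \cref{eq:Dilatation2}, which is precisely the algebraic inversion and rescaling you carry out. The only difference is that you spell out the monotonicity bookkeeping and the factor $|4\Phi|=4|\Phi|$ explicitly, which the paper leaves implicit in its ``Hence.''
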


\begin{prop}\label{prop:LengthConvergence}
	Let $\{\rho_t\}_{t>0}$ be a harmonic maps ray in the direction $\Phi_0$. For an arc $\gamma$ on $M$, let $\ell_{\rho_t}(\gamma)$ denote the $\rho_t$-length of $\gamma$.
	\begin{enumerate}
		\item Let $\gamma$ be a compact horizontal arc of $\Phi_0$ containing no zeros of $\Phi_0$. Then, there exist constants $c_0, c_1>0$ depending only on $\gamma$ such that for every $t>1$
		\begin{equation*}
			0<c_0<\ell_{\rho_t}(\gamma)t^{-1/2} <c_1<\infty.
		\end{equation*}
		\item Let $\gamma$ be a compact vertical arc of $\Phi_0$ (that may contain zeros of $\Phi_0$). Then
		\begin{equation*}
			\ell_{\rho_t}(\gamma) t^{-1/2} \to 0\ \ \text{as}\ t\to\infty.
		\end{equation*}
	\end{enumerate}
\end{prop}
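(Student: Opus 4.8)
The plan is to reduce both statements to the two monotone convergences of \Cref{prop:NormConvergence} by expressing the $\rho_t$-lengths through the (fixed) $|\Phi_0|$-arclength along the arc. Parametrize a horizontal (resp.\ vertical) arc by its $|\Phi_0|$-arclength $d\xi$ (resp.\ $d\eta$), and set $a_0\coloneqq\|\pdv{x}\|_{|\Phi_0|}=\|\pdv{y}\|_{|\Phi_0|}$, the two being equal because $|\Phi_0|$ is a conformal (isotropic) metric. Writing $\hat{x}\coloneqq a_0^{-1}\pdv{x}$ and $\hat{y}\coloneqq a_0^{-1}\pdv{y}$ for the $|\Phi_0|$-unit horizontal and vertical fields, I introduce
\begin{equation*}
	s_t\coloneqq\frac{\|\pdv{y}\|_{\rho_t}}{\|\pdv{x}\|_{\rho_t}},\qquad r_t\coloneqq\frac{\|\pdv{x}\|_{|4\Phi(\rho_t)|}}{\|\pdv{x}\|_{\rho_t}}.
\end{equation*}
From \Cref{eq:Dilatation1} one computes $r_t=\sqrt{1-s_t^2}$, and since $4\Phi(\rho_t)=4t\Phi_0$ scales the $|\Phi_0|$-metric by $2t^{1/2}$ one has $\|\pdv{x}\|_{|4\Phi(\rho_t)|}=2t^{1/2}a_0$. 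Hence off the zeros of $\Phi_0$,
\begin{equation*}
	\|\hat{x}\|_{\rho_t}=\frac{2t^{1/2}}{r_t},\qquad \|\hat{y}\|_{\rho_t}=s_t\|\hat{x}\|_{\rho_t}=\frac{2t^{1/2}s_t}{\sqrt{1-s_t^2}}.
\end{equation*}

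For statement (1) I would write $\ell_{\rho_t}(\gamma)=\int_\gamma\|\hat{x}\|_{\rho_t}\,d\xi$, where $h(\gamma)\coloneqq\int_\gamma d\xi>0$ is the $t$-independent horizontal measure of $\gamma$. Since $r_t\le1$, the first display gives $\|\hat{x}\|_{\rho_t}\ge2t^{1/2}$, so $\ell_{\rho_t}(\gamma)t^{-1/2}\ge2h(\gamma)$, which furnishes the lower constant $c_0$. For the upper bound I would use that $r_t\nearrow1$ by \Cref{prop:NormConvergence}(2): as $\gamma$ is compact and zero-free, $r_1$ is continuous and positive on $\gamma$, so $m\coloneqq\min_\gamma r_1>0$ and $r_t\ge r_1\ge m$ for all $t\ge1$; then $\|\hat{x}\|_{\rho_t}\le2t^{1/2}/m$ and $\ell_{\rho_t}(\gamma)t^{-1/2}\le2h(\gamma)/m$, giving $c_1<\infty$.

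For statement (2) I would write $\ell_{\rho_t}(\gamma)=\int_\gamma\|\hat{y}\|_{\rho_t}\,d\eta$, so that
\begin{equation*}
	\ell_{\rho_t}(\gamma)\,t^{-1/2}=2\int_\gamma\frac{s_t}{\sqrt{1-s_t^2}}\,d\eta.
\end{equation*}
By \Cref{prop:NormConvergence}(1), $s_t\searrow0$ at every point with $\Phi_0\neq0$, and as $u\mapsto u/\sqrt{1-u^2}$ is increasing, the integrand decreases monotonically to $0$ off the finitely many zeros of $\Phi_0$ on $\gamma$, a $d\eta$-null set. To pass to the limit I would dominate by the $t=1$ integrand: for $t\ge1$ the integrand is at most $s_1/\sqrt{1-s_1^2}=\tfrac12\|\hat{y}\|_{\rho_1}$, and $\int_\gamma\tfrac12\|\hat{y}\|_{\rho_1}\,d\eta=\tfrac12\ell_{\rho_1}(\gamma)<\infty$ because $\rho_1$ is a smooth metric on the compact arc $\gamma$. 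Dominated (equivalently, monotone) convergence then yields $\ell_{\rho_t}(\gamma)t^{-1/2}\to0$.

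The main obstacle is exactly the presence of zeros of $\Phi_0$ on the vertical arc in (2): there $s_t\to1$, so $s_t/\sqrt{1-s_t^2}$ blows up and no uniform pointwise bound is available. The observation that rescues the argument is that this blow-up is integrable against the natural measure, since $\frac{s_t}{\sqrt{1-s_t^2}}\,d\eta$ is precisely $(2t^{1/2})^{-1}$ times the $\rho_t$-arclength element along $\gamma$, which is finite; choosing the $t=1$ profile as the dominating function converts the singular pointwise behaviour into an integrable majorant and lets convergence close the estimate. Everything else is a bookkeeping of the algebraic relations between $s_t$, $r_t$, and the $|\Phi_0|$-metric.
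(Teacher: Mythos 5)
Your proof is correct, and up to notation it rests on the same length formulas the paper itself works with: via $|\nu(t)|=(1-s_t)/(1+s_t)$ one has $2/r_t=(1+|\nu(t)|)/|\nu(t)|^{1/2}$ and $2s_t/\sqrt{1-s_t^2}=(1-|\nu(t)|)/|\nu(t)|^{1/2}$, so your expressions for $\|\hat{x}\|_{\rho_t}$ and $\|\hat{y}\|_{\rho_t}$ are exactly the integrands $t^{1/2}(1\pm|\nu(t)|)/|\nu(t)|^{1/2}$ that appear in the proof of \Cref{prop:ExponentialEstimateHoriArc}. Where you genuinely diverge from the paper is in what replaces compactness of the surface: the paper's proof of \Cref{prop:LengthConvergence} is a citation that runs Wolf's closed-surface argument with Lohkamp's global boundedness of the holomorphic energy function (\cite[Corollary 3 and Lemma 9]{lohkamp1991harmonic}) substituted for compactness of $M$, whereas your argument needs no such global input. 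Once \Cref{prop:NormConvergence} is in hand, your part (1) uses only the monotonicity of $r_t$ together with compactness of the zero-free arc (giving $r_t\geq\min_\gamma r_1>0$ for $t\geq1$), and your part (2) uses monotonicity of $s_t$ plus dominated convergence, with the $t=1$ integrand $\tfrac12\|\hat{y}\|_{\rho_1}$ as majorant, whose integrability is nothing but $\ell_{\rho_1}(\gamma)<\infty$; this is precisely what neutralizes the blow-up of the integrand at the zeros of $\Phi_0$ on a vertical arc. The trade-off: the Wolf--Lohkamp route yields quantitative rates (compare the exponential bound of \Cref{prop:ExponentialEstimateHoriArc}), while your soft argument gives only the stated two-sided $t^{1/2}$ bounds and $o(t^{1/2})$ decay --- but it is self-contained, requires compactness of the arc rather than any global estimate on energy densities over the noncompact surface, and so isolates exactly which established results the proposition depends on. Two trivial touch-ups: choose $c_0$ strictly below $2h(\gamma)$ and $c_1$ strictly above $2h(\gamma)/\min_\gamma r_1$ to match the strict inequalities in the statement, and justify $\min_\gamma r_1>0$ by noting that $|\nu(1)|>0$ off the zeros of $\Phi_0$, which follows from formula (III) of \Cref{prop:formulae}.
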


\begin{proof}
	This proposition can be shown similarly to \cite{wolf1989teichmuller}, using the fact that the holomorphic energy function is bounded \cite[Corollary 3 and Lemma 9]{lohkamp1991harmonic} instead of the compactness for a closed surface.
\end{proof}

Following lemma for closed surfaces is proved by Minsky in \cite[Lemma 3.3]{minsky1992harmonic}. Since Minsky's proof works for \Cref{lemma:exponentialestimate}, we omit the proof.

\begin{lemma}\label{lemma:exponentialestimate}
	Let $\Phi\in\QD(\sigma)$. Suppose that $p\in M$ is at least distance $d$ away from poles and zeros of $\Phi$, and
	\begin{equation*}
		\log|\nu(\Phi)|^{-1} < b \ \ \ \text{on}\ B_{|\Phi|}(p,d),
	\end{equation*}
	where $B_{|\Phi|}(p,d)$ denotes the $|\Phi|$-radius $d$ disk centred at $p$, and $\nu(\Phi)=\nu(\rho(\Phi))$. Then
	\begin{equation*}
		\log|\nu(\Phi)(p)|^{-1} < \frac{b}{\cosh d}\,.
	\end{equation*}
\end{lemma}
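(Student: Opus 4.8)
The plan is to reconstruct the maximum-principle argument of \cite{minsky1992harmonic}, which transfers to the punctured setting because the distance hypothesis keeps $B_{|\Phi|}(p,d)$ away from the zeros \emph{and} the poles of $\Phi$. Write $w:=\log|\nu(\Phi)|^{-1}$. Since the harmonic map $h(\rho(\Phi))$ is an orientation-preserving diffeomorphism we have $J=H-L>0$, so $|\nu(\Phi)|^2=L/H<1$ and $w>0$; as $B_{|\Phi|}(p,d)$ contains no zeros of $\Phi$ we also have $L>0$, hence $0<w<\infty$ there. The first step is to turn the Bochner formulae of \Cref{prop:formulae} into a semilinear equation for $w$ in the flat $|\Phi|$-metric. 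Since $w=\tfrac12(\log H-\log L)$, formulae (V) and (VI) give $\Delta_\sigma(\log H-\log L)=4(H-L)$, i.e. $\Delta_\sigma w=2(H-L)$. The $|\Phi|$-metric has density $|\Phi|$ in the conformal coordinate $z$, so $\Delta_{|\Phi|}=\tfrac{\sigma}{|\Phi|}\,\Delta_\sigma$, and the formulae of \Cref{prop:formulae} give $\sigma/|\Phi|=1/\sqrt{HL}$ away from zeros. Using $\sqrt{H/L}=e^{w}$, I obtain
\[
  \Delta_{|\Phi|}w=\frac{2(H-L)}{\sqrt{HL}}=2\!\left(\sqrt{H/L}-\sqrt{L/H}\right)=4\sinh w .
\]
In particular, since $\sinh w\ge w$ for $w\ge0$, the positive function $w$ is a subsolution of the linear equation $\Delta_{|\Phi|}u=4u$.

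Next I would use that, by hypothesis, $B_{|\Phi|}(p,d)$ is disjoint from the singular set of $\Phi$, so the natural coordinate of $\Phi$ identifies it isometrically with the Euclidean disk of radius $d$, and $w$ satisfies $\Delta w=4\sinh w\ge 4w$ there for the ordinary flat Laplacian. For $d'<d$ let $v_{d'}$ be the rotationally symmetric solution of $\Delta v=4v$ on the concentric disk of radius $d'$ with $v\equiv b$ on the boundary circle, namely $v_{d'}(r)=b\,I_0(2r)/I_0(2d')$, where $I_0$ is the modified Bessel function of order zero (the regular solution of $v''+\tfrac1r v'=4v$). Applying the maximum principle to $w-v_{d'}$, which satisfies $\Delta(w-v_{d'})\ge 4(w-v_{d'})$ and is $<0$ on the boundary, rules out a positive interior maximum, so $w\le v_{d'}$ throughout; evaluating at the centre gives $w(p)\le v_{d'}(0)=b/I_0(2d')$. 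Letting $d'\to d$ yields
\[
  \log|\nu(\Phi)(p)|^{-1}=w(p)\le \frac{b}{I_0(2d)} .
\]

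Finally I would pass from this sharp Bessel bound to the stated one by comparing power series: $I_0(2d)=\sum_{k\ge0}d^{2k}/(k!)^2$ while $\cosh d=\sum_{k\ge0}d^{2k}/(2k)!$, and since $(2k)!=(k!)^2\binom{2k}{k}\ge(k!)^2$ the comparison $I_0(2d)\ge\cosh d$ holds termwise, strictly for $d>0$. Hence $w(p)\le b/I_0(2d)<b/\cosh d$ when $d>0$, the case $d=0$ being the hypothesis itself, which is exactly the claim. The only role of the punctured setting is the possible simple poles of a differential in $\QD(\sigma)$; these are excluded from $B_{|\Phi|}(p,d)$ precisely by the distance-$d$ hypothesis, so the flat-disk geometry and the formulae of \Cref{prop:formulae} are available verbatim and no new phenomenon appears. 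I expect the only delicate point to be the bookkeeping of constants: one must check that the normalization producing $\Delta_{|\Phi|}w=4\sinh w$ is exactly the one that makes the elementary inequality $I_0(2d)\ge\cosh d$ deliver the factor $\cosh d$ rather than a weaker Bessel bound.
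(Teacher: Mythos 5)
Your proof is correct, and it is in substance the argument the paper relies on: the paper omits the proof entirely, deferring to Minsky's Lemma 3.3, and that argument is precisely the maximum-principle comparison you reconstruct, based on the flat-metric equation $\Delta_{|\Phi|}w=4\sinh w\ge 4w$ for $w=\log|\nu(\Phi)|^{-1}$. You were also right to read the formulae of \Cref{prop:formulae} in their corrected form $\Delta_\sigma\log H=2H-2L-2$, $\Delta_\sigma\log L=2L-2H-2$ and $HL=|\Phi|^2/\sigma^2$ (as printed, (V)--(VI) omit the logarithms and (II) omits a square; the corrected versions are what Jost's Lemma 3.10.1 gives and what the paper itself uses elsewhere, e.g.\ in the proof of \Cref{lemma:LessThan1t}). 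The one genuine difference of execution is the comparison function: the $\cosh d$ in the statement arises most directly from the radial \emph{supersolution} $v(r)=b\cosh r/\cosh d$, which satisfies $\Delta v=\tfrac{b}{\cosh d}\bigl(\cosh r+\tfrac{\sinh r}{r}\bigr)\le 2v\le 4v$ because $\sinh r\le r\cosh r$; this gives the stated bound with no Bessel functions. Your route through the exact radial solution $b\,I_0(2r)/I_0(2d')$ is equally valid and in fact yields the sharper bound $b/I_0(2d)$, from which the statement follows by your (correct) termwise comparison $I_0(2d)=\sum_k d^{2k}/(k!)^2\ge\sum_k d^{2k}/(2k)!=\cosh d$ --- note the normalization check you flag at the end does matter, since the weaker inequality $\Delta w\ge w$ would only give $b/I_0(d)$, which does \emph{not} imply the $\cosh d$ bound.

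Two small points to tighten, neither fatal. First, the absence of singularities in $B_{|\Phi|}(p,d)$ does not by itself make the ball an embedded Euclidean disk (it need not be injective or simply connected); the standard fix is to pull $w$ back by the exponential map $\exp_p$, which is a locally isometric immersion of the Euclidean $d$-disk into $M$ --- well defined since every unit-speed geodesic from $p$ stays at distance $>d-t$ from the singular set at time $t<d$ --- and to run the comparison upstairs, where the PDE and the bound $w<b$ persist. Second, the hypothesis excludes only zeros and poles, so a puncture at which $\Phi$ extends as a regular point may lie inside the ball; there the equation holds only off that point, but since $w<b$ is bounded the point is removable for the maximum principle (e.g.\ subtract $\varepsilon\log(r_0/|z-z_q|)$ and let $\varepsilon\to0$). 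With these two remarks inserted, your write-up is a complete proof of the lemma and matches the intended (cited) argument.
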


By using \cref{lemma:exponentialestimate} and some estimates in \cite{wolf1991high}, we can bound the asymptotic $\rho_t$-length of horizontal arcs of $\Phi_0$ from above by the $|\Phi_0|$-length.

\begin{prop}\label{prop:ExponentialEstimateHoriArc}
	Let $\Phi_0 \in \SQD(\sigma)$, and let $\{\rho_t\}_{t>0}$ be the harmonic maps ray in the direction of $\Phi_0$. If $\gamma$ be a compact horizontal arc of $\Phi_0$ containing no zeros, then there exist $C,D>0$ depending only on $\gamma$ such that
	\begin{equation*}
		\ell_{\rho_t}(\gamma)<t^{1/2}\ell_{|4\Phi_0|}(\gamma)(1+Ce^{-D\sqrt{t}})\ \text{for all}\ t>0.
	\end{equation*}
\end{prop}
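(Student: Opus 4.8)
The plan is to transfer the estimate into the flat $|4\Phi_0|$-geometry, in which $\gamma$ is a horizontal segment, and to reduce it to a pointwise decay estimate for the Beltrami coefficient. First I would set $r\coloneqq\|\pdv{y}\|_{\rho_t}/\|\pdv{x}\|_{\rho_t}$ along $\gamma$. By \eqref{eq:Dilatation1} one has $\|\pdv{x}\|_{|4\Phi(\rho_t)|}=\|\pdv{x}\|_{\rho_t}\sqrt{1-r^2}$, and since $|4\Phi(\rho_t)|=t\,|4\Phi_0|$ magnifies $|4\Phi_0|$-lengths by $t^{1/2}$, integrating $\|\pdv{x}\|_{\rho_t}$ along the horizontal arc $\gamma$ gives
\begin{equation*}
  \ell_{\rho_t}(\gamma)=t^{1/2}\int_\gamma\frac{\|\pdv{x}\|_{|4\Phi_0|}}{\sqrt{1-r^2}}\,dx.
\end{equation*}
So it is enough to prove the pointwise inequality $(1-r^2)^{-1/2}\le 1+Ce^{-D\sqrt{t}}$ on $\gamma$ and integrate. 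By \eqref{eq:Dilatation2}, $r=(1-|\nu(t)|)/(1+|\nu(t)|)\le 1-|\nu(t)|\le\log|\nu(t)|^{-1}$, so, writing $w(t)\coloneqq\log|\nu(t)|^{-1}\ge 0$ and using $(1-r^2)^{-1/2}\le 1+w(t)^2$ once $w(t)$ is small, the whole statement reduces to showing that $w(t)$ decays like $e^{-D\sqrt{t}}$ uniformly on $\gamma$.

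This decay is exactly what \Cref{lemma:exponentialestimate} is built to produce, the mechanism being that the $|\Phi(\rho_t)|$-metric magnifies $|\Phi_0|$-distances by $t^{1/2}$. Since $\gamma$ is compact and disjoint from the zeros of $\Phi_0$, and since each simple pole of $\Phi_0$ lies at positive (albeit finite) $|\Phi_0|$-distance from the compact set $\gamma$, there is a fixed $d_0>0$ such that every $p\in\gamma$ is at $|\Phi_0|$-distance $\ge d_0$ from the zeros and poles; the fixed-size disk $B_{|\Phi_0|}(p,d_0/2)$ then contains no singularity. Measured in the $|\Phi(\rho_t)|$-metric this is the disk $B_{|\Phi(\rho_t)|}(p,t^{1/2}d_0/2)$, so I would apply \Cref{lemma:exponentialestimate} with $\Phi=\Phi(\rho_t)=t\Phi_0$ and $d=t^{1/2}d_0/2$ to obtain
\begin{equation*}
  w(t)(p)<\frac{b}{\cosh(t^{1/2}d_0/2)}\le 2b\,e^{-(d_0/2)\sqrt{t}},
\end{equation*}
provided $b$ is an upper bound for $w(t)$ on that disk. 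Feeding this into the reduction above and taking $D=d_0$ and $C=4b^2$ would yield the asserted estimate.

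The hard part will be the input $b$: I need an a priori upper bound for $w(t)=\log|\nu(t)|^{-1}$ on the set of points at $|\Phi_0|$-distance $\ge d_0/2$ from the singular set, and this bound has to be uniform in the scale $t$. This is precisely where the high-energy estimates of \cite{wolf1991high} come in (supplemented near the punctures by the boundedness of the holomorphic energy from \cite{lohkamp1991harmonic}): in the closed case they already encode the exponential decay, and here they should furnish the uniform constant $b$ needed to run \Cref{lemma:exponentialestimate}. I expect the genuinely new difficulty, relative to the closed surface, to be confirming that $b$ can be chosen uniformly on a fixed-radius neighbourhood of the punctures, where $\Phi_0$ has a simple pole and the conformal and flat geometries degenerate. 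A final caveat is that such a uniform $b$ can only exist for $t$ bounded away from $0$ (as $t\to 0$ one has $|\nu(t)|\to 0$, hence $w(t)\to\infty$); the estimate is therefore in substance a high-energy statement, used only as $t\to\infty$, and it is in that regime that the reduction $(1-r^2)^{-1/2}\le 1+w(t)^2$ is also valid.
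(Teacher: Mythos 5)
Your reduction is sound, and it is essentially the paper's own argument in different algebraic clothing: writing the $\rho_t$-length of the horizontal arc as $t^{1/2}\int_\gamma (1-r^2)^{-1/2}\,ds_{|4\Phi_0|}$ is the same as the paper's identity $\ell_{\rho_t}(\gamma)=t^{1/2}\int_\gamma |\nu(t)|^{-1/2}(1+|\nu(t)|)\,ds_{|\Phi_0|}$ (indeed $1-r^2=4|\nu|/(1+|\nu|)^2$), and applying \Cref{lemma:exponentialestimate} to $\Phi=t\Phi_0$ at scale $d=t^{1/2}d_0/2$, exploiting that $|\Phi_0|$-distances from $\gamma$ to the singular set scale like $t^{1/2}$, is exactly the paper's mechanism. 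The genuine gap is the input $b$, which you explicitly leave open and propose to import from the high-energy estimates of \cite{wolf1991high}. No such import is needed, and the paper does not use one: the uniform-in-$t$ bound follows from two facts already at your disposal. First, the closed $(d_0/2)$-neighborhood $N_{|\Phi_0|}(\gamma,d_0/2)$ is a \emph{compact} subset of $M$ on which $\Phi_0$ has no zeros and which stays $|\Phi_0|$-distance at least $d_0/2$ from the punctures, so $|\nu(1)|$ is continuous and strictly positive there and $C_0\coloneqq\sup_{N_{|\Phi_0|}(\gamma,d_0/2)}\log|\nu(1)|^{-1}<\infty$. Second, by \Cref{prop:BelDiffConvergentTo1} the function $t\mapsto|\nu(t)(q)|$ is monotone increasing away from the zeros of $\Phi_0$, so for every $t>1$ and every $q$ in that neighborhood, $\log|\nu(t)(q)|^{-1}<\log|\nu(1)(q)|^{-1}\leq C_0$; this is the uniform $b$. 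Establishing that monotonicity without compactness of the surface is precisely the content of Section 3 of the paper, and it was proved there so that it could be fed into this estimate.

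Two of your side remarks also deserve correction. Your worry that $b$ must be controlled ``on a fixed-radius neighbourhood of the punctures, where the conformal and flat geometries degenerate'' is a red herring: the only set on which \Cref{lemma:exponentialestimate} requires the bound is the union of the disks $B_{|\Phi_0|}(p,d_0/2)$, $p\in\gamma$, which by the very choice of $d_0$ never comes within $|\Phi_0|$-distance $d_0/2$ of any puncture or zero, so no degeneration ever enters. On the other hand, your caveat that the bound can only hold for $t$ bounded away from $0$ is correct and is in fact shared by the paper: the paper's proof produces the estimate for $t>1$ (monotonicity is run from the reference time $t=1$), consistent with \Cref{prop:LengthConvergence} being stated for $t>1$; as $t\to 0$ one has $\ell_{\rho_t}(\gamma)\to\ell_\sigma(\gamma)>0$ while the right-hand side tends to $0$, so the ``for all $t>0$'' in the statement should be read as ``for all $t>1$'' (equivalently, for $t$ large), which is the only regime in which the proposition is used.
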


\begin{proof}
	Let $\Sigma(\Phi_0)$ denote the set consisting of punctures and zeros of $\Phi_0$. We set
	\begin{equation*}
		d(t)=\inf \{d_{t|\Phi_0|}(p,q)\mid p\in\gamma, q\in\Sigma(\Phi_0)\}.
	\end{equation*}
	Then we find that $d(t)=t^{1/2}d(1)$ by the definition of quadratic differential metrics. We define a constant $C_0$ as
	\begin{equation*}
		C_0=\sup\{ \log|\nu(1)(q)|^{-1}\mid q\in N_{|\Phi_0|}(\gamma,d(1)/2) \},
	\end{equation*}
	where $N_{|\Phi_0|}(\gamma,d(1)/2) $ denote the $(d(1)/2)$-neighborhood of $\gamma$ in the $|\Phi_0|$ metric. Since $|\nu(t)|$ increases monotonically in $t$, we see that for every $q\in N_{|\Phi_0|}(\gamma,d(1)/2)$ and every $t>1$,
	\begin{equation*}
		\log|\nu(t)(q)|^{-1}<\log|\nu(1)(q)|^{-1}\leq C_0.
	\end{equation*}
	Therefore, by \cref{lemma:exponentialestimate},
	\begin{equation*}
		\log |\nu(t)|^{-1}<\frac{C_0}{\cosh(d(t)/2)}=\frac{C_0}{\cosh(t^{1/2}d(1)/2)}<C_0e^{-\sqrt{t}d(1)/2}\ \
	\end{equation*}
	holds on $\gamma$. Then, we have
	\begin{align*}
		|\nu(t)|^{-1/2}-1&<\frac{1}{|\nu(1)|^{1/2}}(1-|\nu(t)|^2)\\
		&\leq\frac{1}{\min_\gamma|\nu(1)|^{1/2}}\log|\nu(t)|^{-2} \\
		&\leq\frac{2}{\min_\gamma|\nu(1)|^{1/2}}C_0e^{-\sqrt{t}d(1)/2}.
	\end{align*}
	Setting
	\begin{equation*}
		C=\frac{2}{\min_\gamma|\nu(1)|^{1/2}}C_0\ \ \text{and}\ \  D=d(1)/2,
	\end{equation*}
	we have
	\begin{align*}
		\ell_{\rho_t}(\gamma)&=\int_\gamma\left\| \pdv{x} \right\|_{\rho_t}ds\\
		&=\int_{\gamma}\{H(t)^{1/2}+L(t)^{1/2}\}\,ds_\sigma\\
		&=\int_{\gamma}H(t)^{1/2}(1+|\nu(t)|)\,ds_\sigma\\
		&=\int_\gamma\frac{t^{1/2}|\Phi_0|^{1/2}}{|\nu(t)|^{1/2}}(1+|\nu(t)|)\frac{ds_\sigma}{\sigma^{1/2}}\\
		&=t^{1/2}\int_\gamma(1+(|\nu(t)|^{-1/2}-1))(2-(1-|\nu(t)|))\,ds_{|\Phi_0|}\\
		&<2t^{1/2}\int_\gamma(1+(|\nu(t)|^{-1/2}-1))\,ds_{|\Phi_0|}\\
		&<t^{1/2}\ell_{|4\Phi_0|}(\gamma)(1+Ce^{-D\sqrt{t}}),
	\end{align*}
	as $2\ell_{|\Phi_0|}(\gamma)=\ell_{|4\Phi_0|}(\gamma)$. We obtain the desired inequality.
\end{proof}

\begin{corollary}
	Under the assumptions of \Cref{prop:ExponentialEstimateHoriArc},
	\begin{equation*}
		\lim_{t\to\infty}t^{-1/2}\ell_{\rho_t}(\gamma)\leq\ell_{|4\Phi_0|}(\gamma)
	\end{equation*}
	holds.
\end{corollary}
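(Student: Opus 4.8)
The plan is to obtain the corollary as an immediate consequence of \Cref{prop:ExponentialEstimateHoriArc}. First I would divide the inequality proved there by $t^{1/2}$, so that for every $t>0$,
\begin{equation*}
	t^{-1/2}\ell_{\rho_t}(\gamma) < \ell_{|4\Phi_0|}(\gamma)\bigl(1 + C e^{-D\sqrt{t}}\bigr),
\end{equation*}
where $C,D>0$ depend only on $\gamma$. Since $D>0$, the correction term $Ce^{-D\sqrt{t}}$ tends to $0$ as $t\to\infty$, so the right-hand side decreases to $\ell_{|4\Phi_0|}(\gamma)$. Passing to the limit superior on the left then gives $\limsup_{t\to\infty} t^{-1/2}\ell_{\rho_t}(\gamma)\leq\ell_{|4\Phi_0|}(\gamma)$, which is the claimed bound.

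The only point needing a word of care is the meaning of $\lim$ in the statement. By \Cref{prop:LengthConvergence}(1) the quantity $t^{-1/2}\ell_{\rho_t}(\gamma)$ stays between two positive constants, so its limit superior is finite and the bound above is meaningful as stated. In fact the genuine limit exists: writing $t^{-1/2}\ell_{\rho_t}(\gamma) = \int_\gamma |\nu(t)|^{-1/2}\bigl(1+|\nu(t)|\bigr)\,ds_{|\Phi_0|}$ as in the last lines of the proof of \Cref{prop:ExponentialEstimateHoriArc}, the integrand converges pointwise to $2$ by \Cref{prop:BelDiffConvergentTo1} and is uniformly bounded on $\gamma$ by $2(\min_\gamma|\nu(1)|)^{-1/2}$ (using that $|\nu(t)|$ increases in $t$ and that $\gamma$ is compact and avoids the zeros, so $\min_\gamma|\nu(1)|>0$). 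Dominated convergence then yields $\lim_{t\to\infty} t^{-1/2}\ell_{\rho_t}(\gamma) = 2\ell_{|\Phi_0|}(\gamma) = \ell_{|4\Phi_0|}(\gamma)$, which in particular gives the upper bound.

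There is no substantive obstacle here: all the analytic content lives in the exponential estimate of \Cref{prop:ExponentialEstimateHoriArc}, and the corollary merely records its asymptotic consequence. If one wishes to keep the statement exactly as $\lim\leq$ without invoking dominated convergence, the first paragraph alone suffices, interpreting the left-hand side as a limit superior.
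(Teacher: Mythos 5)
Your proof is correct and matches the paper's intent exactly: the paper states this corollary without proof, treating it as the immediate consequence of dividing the inequality of \Cref{prop:ExponentialEstimateHoriArc} by $t^{1/2}$ and letting $t\to\infty$, which is precisely your first paragraph. Your additional dominated-convergence argument showing the genuine limit exists (rather than only a $\limsup$ bound) is a sound refinement, but not a different approach.
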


The following proposition will be used in the proof of \Cref{lemma:StaircaseReoresentative}.

\begin{lemma}\label{lemma:Rneighborhood}
	Let $q\in \overline{S}-S$, a puncture of $S$. Then for every $\Phi\in\QD(\sigma)$,
	\begin{equation*}
		\ell_{|\Phi|}(\partial B_{|\Phi|}(q,R))\leq L_1R,
	\end{equation*}
	where $L_1$ is a constant which depends only on the topological type of $S$.
\end{lemma}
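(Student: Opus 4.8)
The plan is to treat $|\Phi|$ as a singular flat metric. Since $\Phi\in\QD(\sigma)$ is integrable it has at worst a simple pole at each puncture, so the metric completion of $(S,|\Phi|)$ is the closed surface $\overline{S}$ carrying a flat metric with cone singularities; the ball $B_{|\Phi|}(q,R)$ and its boundary are understood in this completion. At a point $x$ where $\Phi$ has a zero of order $k\ge 1$ the cone angle is $(k+2)\pi$, at a regular point it is $2\pi$, and at a simple pole it is $\pi$; in particular the angle $\theta_q$ at the puncture $q$ satisfies $\theta_q\ge\pi$. I would first reduce the statement to a bound, uniform in $\Phi$, on the total cone-angle excess of the surface, and then estimate the perimeter by a ``radius times angular width'' argument.

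The topological input is that this excess is controlled by $g$ and $n$ alone. The divisor of $\Phi$, viewed as a section of $K_{\overline{S}}^{\otimes 2}$, has degree $4g-4$, so writing $P\le n$ for the number of punctures at which $\Phi$ has a simple pole, $\sum_{\text{zeros}}\mathrm{ord}_x(\Phi)=4g-4+P\le 4g-4+n$. Hence the total positive excess $\sum_{\text{zeros}}(\theta_x-2\pi)=\pi\sum_{\text{zeros}}\mathrm{ord}_x(\Phi)\le(4g-4+n)\pi$, and since the order of any single zero is at most this sum, also $\theta_q\le(4g-2+n)\pi$. Both quantities depend only on the topological type of $S$.

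For the length estimate I would parametrize $\partial B_{|\Phi|}(q,R)$ by the endpoints of length-$R$ minimizing geodesics issuing from $q$, organized as an expanding wavefront. Away from the cut locus and the cone points the exponential map from $q$ is a local isometry, and because the metric is flat along each geodesic the relevant Jacobi field has norm at most $r$ at radius $r$; thus the front map has angular derivative of norm at most $R$ on $\partial B_{|\Phi|}(q,R)$. The total angular width $W(R)$ of the front equals $\theta_q$ initially and increases by exactly $\theta_x-2\pi$ whenever the front reaches a zero $x$ (a geodesic meeting a cone point of angle $\theta_x$ continues as a fan of width $\theta_x-2\pi$), while it only decreases at the deficit cone points of angle $\pi$; discarding these decreases merely enlarges the estimate. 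Since passing to the image can only shorten length (the cut locus makes the front map non-injective), I obtain
\[
\ell_{|\Phi|}(\partial B_{|\Phi|}(q,R))\le W(R)\,R\le\Bigl(\theta_q+\sum_{\text{zeros}}(\theta_x-2\pi)\Bigr)R\le L_1R,
\]
with, for instance, $L_1=(8g-6+2n)\pi$. As a consistency check, the same reasoning yields the Gauss--Bonnet differential relation $\tfrac{d}{dR}\,\ell_{|\Phi|}(\partial B_{|\Phi|}(q,R))=\theta_q+\sum(\theta_x-2\pi)$, the sum running over the cone points $x\neq q$ enclosed by the ball.

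The main obstacle I anticipate is making the wavefront argument fully rigorous on a flat cone surface: one must control the branching of geodesics at the higher-order zeros (the emergence of the fan of width $\theta_x-2\pi$) together with the structure of the cut locus. The saving grace is that all the delicate phenomena, namely the non-injectivity along the cut locus and the angle deficits at the simple poles, only \emph{decrease} the length and so may be bounded away crudely; only the positive excess at zeros must be tracked, and that is exactly what the topological count of the second step supplies. Alternatively one could apply Gauss--Bonnet to $B_{|\Phi|}(q,R)$ directly, but there the analysis is messier, since each cut-locus corner is a reflex vertex contributing positively to the total geodesic curvature, so one would first have to bound the number of such vertices by a constant depending only on $g$ and $n$; this is why I prefer the wavefront estimate, in which overcounting is automatically harmless.
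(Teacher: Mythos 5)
Your proposal is correct, but it takes a genuinely different route from the paper's. The paper's proof is a two-line Gauss--Bonnet computation: setting $\gamma_r=\partial B_{|\Phi|}(q,r)$, it invokes the first variation formula $\tfrac{d}{dr}\ell_{|\Phi|}(\gamma_r)=\kappa(\gamma_r)$ for the total geodesic curvature of the distance circle, bounds $\kappa(\gamma_r)=2\pi\chi(B_{|\Phi|}(q,r))+\pi\sum_{p\in B_{|\Phi|}(q,r)}\deg(p)\leq 2\pi+(4g-4+n)\pi$ by Gauss--Bonnet, and integrates in $r$, obtaining the sharper constant $L_1=(4g-2+n)\pi$ versus your $(8g-6+2n)\pi$ (both depend only on the topological type, so either suffices). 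You instead run an exponential-map/wavefront estimate: the length of the distance circle is at most the angular width of the front times $R$, with the width controlled by the cone angle $\theta_q$ plus the total excess at the zeros. The topological input is identical in both proofs --- integrability forces at worst simple poles, and the degree count $4g-4$ bounds the total excess --- so the difference lies entirely in how angle is converted into length. What your route buys is robustness: cut-locus collisions and the angle deficits at simple poles only \emph{shorten} the boundary relative to the front, so every delicate phenomenon can be discarded in the favorable direction. In fact, the objection you raise against the ``alternative'' Gauss--Bonnet route (reflex corners of $\partial B_{|\Phi|}(q,r)$ along the cut locus contributing positively to the total turning) applies verbatim to the paper's own proof: the identity $\tfrac{d}{dr}\ell=\kappa$ as stated there ignores those corners and is really an inequality $\tfrac{d}{dr}\ell\leq\int\kappa_g$ over the smooth part of the circle --- fortunately an inequality in the needed direction, for exactly the reason you give (collisions destroy length). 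So your wavefront argument is, in effect, a more careful version of the step the paper (following Minsky) takes for granted, and your closing ``consistency check'' $\tfrac{d}{dR}\ell=\theta_q+\sum(\theta_x-2\pi)$ is precisely the identity the paper integrates; what the paper's route buys in exchange is brevity and a better constant.
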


\begin{remark}
	Note for closed Riemann surfaces that \cref{lemma:Rneighborhood} is a special case of Lemma 4.1 in \cite{minsky1992harmonic}. For every point $p\in\overline{S}$, let $\deg(p)$ denote the degree of $\Phi$ at $p$. We know that a singularity $p$ has a cone angle of $(n+2)\pi$, or concentrated curvature $-n\pi$, where $n=\deg (p)$. Note that, if $p$ is a pole of $\Phi$, then $\deg(p)=-1$.
\end{remark}

\begin{proof}
	For a number $r$ with $0\leq r\leq R$, we set $\gamma_r=\partial B_{|\Phi|}(q,R)$. Then, we have
	\begin{equation*}
		\frac{d}{dr}\ell_{|\Phi|}(\gamma_r)=\kappa (\gamma_r),
	\end{equation*}
	where $\kappa(\gamma_r)$ is a total curvature of $\gamma_r$ in the $|\Phi|$ metiric. By the Gauss-Bonnet theorem, we have
	\begin{align*}
		\kappa(\gamma_r)&=2\pi\chi(B_{|\Phi|}(q,r))-\sum_{p\in B_{|\Phi|}(q,R)}(-\deg(p)\pi)\\
		&\leq2\pi+(4g-4+n)\pi.
	\end{align*}
	Therefore, $\ell_{|\Phi|}(\gamma_R)=\int_0^R\kappa(\gamma_r)\,dr \leq(4g-2+n)\pi R$
	follows.
\end{proof}

\section{The identification of the compactifications}

\subsection{The Fundamental Lemma}

As described at the begginning of the previous section, a holomorphic quadratic differential $\Phi$ on $M$ defines the vertical measured foliation on $M$.
In fact, by the Hubbard-Masur theorem, the map $\Phi \mapsto F_{v}(\Phi)$ is a homeomorphism from $\QD(\sigma)$ to $\MF$ (\cite{hubbard1979quadratic}, also see \cite[p.206]{gardiner1987teichmuller}), where $\MF$ denotes the set of measured foliations on $S$ (here $\MF$ contains the empty measured foliation).
We define a homeomorphism $\beta\colon\QD(\sigma)\to\MF$ by $\beta\Phi= F_{v}(4\Phi)$. Then, by the definition of the transverse measure of $\beta\Phi$, we find that, for every $[\gamma]\in\mathcal{C}$
\begin{equation*}
	i(\beta\Phi,[\gamma])=\|\Phi\|^{1/2}i(\beta(\Phi/\|\Phi\|),[\gamma]).
\end{equation*}

 The main purpose of this subsection is to prove the following proposition analogous to \cite[Lemma 4.6]{wolf1989teichmuller}.

 \begin{theorem}\label{prop:FundamentalInequality}
	 Fix $[\gamma]\in\mathcal{C}$ and $\varepsilon>0$. Then, for every $\rho\in\Teich_{g,n}$, there exist nonnegative constants $c_0=c_0([\gamma],\varepsilon)$, $k_0=k_0(\varepsilon,\|\Phi(\rho)\|)$ and $\eta=\eta([\gamma],\varepsilon,\|\Phi(\rho)\|)$ such that
 	\begin{equation*}
 		i(\beta\Phi(\rho),[\gamma])\leq\ell_\rho([\gamma])\leq k_0i(\beta\Phi(\rho),[\gamma])+\eta,
 	\end{equation*}
 	and
 	\begin{equation*}
 		k_0\searrow 1,\ \ \eta\|\Phi(\rho)\|^{-1/2}\to c_0 <\varepsilon
 		\ \ \text{as}\  \|\Phi(\rho)\|\to\infty.
 	\end{equation*}
\end{theorem}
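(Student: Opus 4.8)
The plan is to treat the two inequalities separately and to reduce a general metric $\rho$ to a point on a harmonic maps ray. Writing $t\coloneqq\|\Phi(\rho)\|$ and $\Phi_0\coloneqq\Phi(\rho)/t\in\SQD(\sigma)$, the metric $\rho$ is precisely the time-$t$ point $\rho_t$ of the ray in the direction of $\Phi_0$, and $i(\beta\Phi(\rho),[\gamma])=t^{1/2}i(\beta\Phi_0,[\gamma])$. This lets me invoke \Cref{prop:NormConvergence}, \Cref{prop:LengthConvergence} and \Cref{prop:ExponentialEstimateHoriArc} for $\rho=\rho_t$ and read off the asymptotics as $t\to\infty$. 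For the lower bound I argue pointwise: let $\zeta=\xi+i\eta$ be the natural coordinate of $4\Phi(\rho)$, so that $\beta\Phi(\rho)=F_v(4\Phi(\rho))$ has transverse measure $|d\xi|$ and $i(\beta\Phi(\rho),[\gamma])=\inf_{\gamma'\in[\gamma]}\int_{\gamma'}|d\xi|$. Since $\pdv{x}$ and $\pdv{y}$ are $\rho$-orthogonal and $d\xi$ vanishes on $\pdv{y}$, \Cref{eq:Dilatation1} gives $\|\pdv{x}\|_{|4\Phi(\rho)|}\le\|\pdv{x}\|_{\rho}$, so $|d\xi(v)|\le\|v\|_{\rho}$ for every tangent vector $v$. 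Integrating over an arbitrary representative and taking the infimum yields $i(\beta\Phi(\rho),[\gamma])\le\ell_\rho([\gamma])$ with no restriction on $t$.

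The upper bound is where the staircase enters. I would start from a representative whose horizontal measure is close to $i(\beta\Phi_0,[\gamma])$ and homotope it, inside the singular flat $|\Phi_0|$-metric, to a concatenation $\gamma_\Phi=\gamma^{\mathrm{h}}\cup\gamma^{\mathrm{v}}$ of finitely many horizontal arcs $\gamma^{\mathrm{h}}$ (along leaves of $F_h(\Phi_0)$, avoiding zeros) and vertical arcs $\gamma^{\mathrm{v}}$ (along leaves of $F_v(\Phi_0)$). The vertical arcs contribute no horizontal measure, so the total $|4\Phi_0|$-length $M\coloneqq\ell_{|4\Phi_0|}(\gamma^{\mathrm{h}})$ equals the horizontal measure of $\gamma_\Phi$ and satisfies $M\ge m\coloneqq i(\beta\Phi_0,[\gamma])$. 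Unlike the closed case, where $M=m$ can be arranged, a simple pole forces $\gamma_\Phi$ to detour around the one-pronged singularity at a puncture, so I only expect $M\le m+c_0$ with a controlled excess $c_0<\varepsilon$. The excess comes from short horizontal arcs crossing a small $|\Phi_0|$-disk about a puncture, whose $|4\Phi_0|$-length I bound by \Cref{lemma:Rneighborhood}: the boundary of a radius-$R$ disk has $|\Phi_0|$-length at most $L_1R$, so choosing $R$ small forces $c_0<\varepsilon$. This construction is the content of the separate staircase lemma.

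With $\gamma_\Phi$ in hand I estimate $\ell_\rho(\gamma_\Phi)=\ell_\rho(\gamma^{\mathrm{h}})+\ell_\rho(\gamma^{\mathrm{v}})$. Applying \Cref{prop:ExponentialEstimateHoriArc} arc by arc bounds the horizontal part by $t^{1/2}M\,(1+Ce^{-D\sqrt t})$, while \Cref{prop:LengthConvergence}(2) gives $t^{-1/2}\ell_\rho(\gamma^{\mathrm{v}})\to0$. Writing $t^{1/2}M=t^{1/2}m+t^{1/2}(M-m)$ and recalling $t^{1/2}m=i(\beta\Phi(\rho),[\gamma])$, I set
\[
k_0\coloneqq 1+Ce^{-D\sqrt t},\qquad \eta\coloneqq t^{1/2}(M-m)(1+Ce^{-D\sqrt t})+\ell_\rho(\gamma^{\mathrm{v}}),
\]
so that $\ell_\rho([\gamma])\le\ell_\rho(\gamma_\Phi)\le k_0\,i(\beta\Phi(\rho),[\gamma])+\eta$, with $k_0\searrow1$ and $\eta\,t^{-1/2}\to M-m=c_0<\varepsilon$ as $t\to\infty$. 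This matches the claimed asymptotics.

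The main obstacle is the staircase construction near the punctures together with the uniformity of the resulting constants. I must route $\gamma_\Phi$ so that it stays in the correct homotopy class, the horizontal excess $M-m$ is bounded by $\varepsilon$ independently of the direction $\Phi_0$, and the finitely many vertical connecting arcs stay away from the punctures enough that \Cref{prop:LengthConvergence}(2) applies while their number does not blow up. Controlling the exponential constants $C,D$ of \Cref{prop:ExponentialEstimateHoriArc} — which depend on the $|\Phi_0|$-distance from the horizontal arcs to the zeros and poles — uniformly enough to extract limits depending only on $\|\Phi(\rho)\|$ is the delicate point, and is exactly where the one-pronged behavior at simple poles, handled through \Cref{lemma:Rneighborhood}, replaces Wolf's use of the compactness of a closed surface.
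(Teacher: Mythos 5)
Your skeleton matches the paper's: the lower bound is Wolf's pointwise argument (the paper omits it, citing \cite{wolf1989teichmuller}, and your version of it is correct); the upper bound goes through a staircase representative whose horizontal excess near the punctures is forced below $\varepsilon$ via \Cref{lemma:Rneighborhood}, whose vertical arcs are killed by \Cref{prop:LengthConvergence}(2), and whose horizontal arcs are compared to $|4\Phi_0|$-length. Your one real deviation is in how the \emph{main} horizontal part is handled: you apply the exponential estimate \Cref{prop:ExponentialEstimateHoriArc} to \emph{all} horizontal arcs and set $k_0=1+Ce^{-D\sqrt{t}}$, whereas the paper applies that estimate only to the small additional part $\widetilde{\gamma}_\Phi^h$ (yielding $\eta_2$ and the limit $c_0$) and controls the main part $\gamma_\Phi^h$ --- the part carrying the full intersection number --- by the stretch function $k_2(\rho)=\max_{p\in M_\delta(\rho)}\|(\pdv{x})_p\|_\rho/\|(\pdv{x})_p\|_{|4\Phi(\rho)|}$, its upper semicontinuity, compactness of $\SQD(\sigma)$, and Dini's theorem applied to $\kappa_A$. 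Your route, if completed, would even give an explicit exponential rate for $k_0-1$; but it also forces $k_0$ to depend on $[\gamma]$ through the arcs, while the statement (and the paper's construction) makes $k_0=k_0(\varepsilon,\|\Phi(\rho)\|)$ only.

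The genuine gap is the uniformity in the direction $\Phi_0=\Phi(\rho)/\|\Phi(\rho)\|$, which you name as ``the delicate point'' but do not resolve; it is not a detail but the core of the punctured-surface proof. In the statement, $k_0$, $\eta$, $c_0$ must be functions of $(\varepsilon,[\gamma],\|\Phi(\rho)\|)$ alone, and the asymptotics are taken as $\|\Phi(\rho)\|\to\infty$ with $\rho$ ranging over all of $\Teich_{g,n}$, so $\Phi_0$ varies as the limit is taken. As written, your $C,D$ are per-arc constants (they depend on the $|\Phi_0|$-distance from the staircase to $\Sigma(\Phi_0)$ and on bounds for $|\nu(\rho(\Phi_0))|$ near it), your $c_0=M-m$ is the excess of one chosen representative for one direction, and \Cref{prop:LengthConvergence}(2) is a pointwise-in-$\Phi_0$ statement; consequently your $k_0$ and $\eta$ are functions of $\rho$, not of $\|\Phi(\rho)\|$, and the claimed limits $k_0\searrow1$, $\eta\|\Phi(\rho)\|^{-1/2}\to c_0$ do not follow. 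The paper closes exactly this gap by: (i) building the staircase simultaneously for every $\Phi$ in a neighborhood $\mathcal{N}$ of each $\Phi_0$ and covering the compact sphere $\SQD(\sigma)$ by finitely many such neighborhoods (this is the content, and the bulk of the work, of \Cref{lemma:StaircaseReoresentative}, which you cite as a black box --- geodesic representative, component counting via double branched covers at simple poles, dragging horizontal measure out of singular neighborhoods, operation $(\ast)$); (ii) defining $c_0=\sup_{\Phi\in\SQD(\sigma)}i(\beta\Phi,\widetilde{\gamma}_\Phi^h)$ and building $\eta_2$ from that supremum, so every direction shares the same limit; (iii) proving semicontinuity statements ($k_2$ upper semicontinuous, $\Phi\mapsto\min_{M_{r'}(\rho(\Phi))}|\nu(\Phi)|$ lower semicontinuous, see \Cref{remark:ConstantEta2}) so that maxima and minima over the compact sphere exist and make the constants $C$, $D$ uniform; and (iv) invoking Dini's theorem to upgrade the monotone pointwise convergence $\kappa_A\searrow1$ to uniform convergence. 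Some version of (i)--(iv) is indispensable for your outline as well.
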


 \begin{remark}
 	In the statement of Lemma 4.6 in \cite{wolf1989teichmuller}, it is written that the constant $k_0$ depends on $[\gamma]$. However, from his proof, one can observe it is actually independent.
 \end{remark}

\begin{proof}
	The lower bound $i(\beta\Phi(\rho),[\gamma])\leq\ell_\rho([\gamma])$ is shown in the same manner with \cite{wolf1989teichmuller}, so we omit the proof here.

	In order to show the upper bound, we first show the following lemma.

	\begin{prop}\label{lemma:StaircaseReoresentative}
		For each $\Phi \in \SQD(\sigma)$, we can construct a representative $\gamma_\Phi\in[\gamma]$ so that there exist domains $R_0=R_0([\gamma],\varepsilon)$ and $R_1=R_1(\varepsilon)$ containing all punctures of $M$,  constants $\delta=\delta(\varepsilon)>0, K=K([\gamma])>0$, and positive integers $L=L([\gamma]), m=m([\gamma],\varepsilon)$ satisfying the following conditions:
		\begin{enumerate}
			\item The curve $\gamma_\Phi$ consists of horizontal and vertical arcs of $\Phi$ and does not intersect with the neighborhood $R_0=R_0([\gamma],\varepsilon)$ of the punctures of $M$.
			\item The horizontal arcs of $\gamma_\Phi$ are divided into the main part $\gamma_\Phi^h$ and the additional part $\widetilde{\gamma}_\Phi^h$ such that
			\begin{itemize}
				\item the main part $\gamma_\Phi^h$ is disjoint from
				\begin{equation*}
				\left(	\bigcup_{p} B_{\sigma}(p,\delta) \right) \cup R_1,
				\end{equation*}
				where the union is over all zeros $p$ of $\Phi$.
				\item the number of the connected segments constituting the additional part $\widetilde{\gamma}_\Phi^h$ is at most $m=m([\gamma],\varepsilon)$, and they are disjoint from some neighborhood of zeros of $\Phi$.
			\end{itemize}
			\item $i(\beta\Phi,\gamma_\Phi^h)=i(\beta\Phi,[\gamma])$ and $i(\beta\Phi,\widetilde{\gamma}_\Phi^h)<\varepsilon$.
			\item The total $|\Phi|$-length of the vertical arcs of $\gamma_\Phi$ is uniformly bounded by $K=K([\gamma])$, i.e. $i(\beta(-\Phi),\gamma_\Phi^v)<K$, where $\gamma_\Phi^v$ is the union of the vertical arcs. Moreover, the number of the connected segments of the vertical arcs which contain a zero of $\Phi$ is at most $L=L([\gamma])$.
		\end{enumerate}
	\end{prop}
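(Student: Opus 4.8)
The plan is to build $\gamma_\Phi$ by taking a carefully chosen representative of $[\gamma]$ and straightening it into a concatenation of horizontal and vertical arcs in the $|\Phi|$-metric, while simultaneously steering it away from the punctures and the zeros of $\Phi$. The natural starting point is the tautological representative: among all representatives of $[\gamma]$, the vertical measured foliation $\beta\Phi=F_v(4\Phi)$ has a minimal-intersection representative that runs along horizontal leaves (where it accrues transverse $\beta\Phi$-measure) and along vertical leaves (where it accrues none, so it can "turn corners" for free in the intersection count). This is exactly the classical staircase construction of \cite{wolf1989teichmuller}. My first step would be to recall this construction away from the singular set, producing a curve whose horizontal arcs carry total transverse measure exactly $i(\beta\Phi,[\gamma])$ and whose vertical arcs are the connectors; condition (3)'s identity $i(\beta\Phi,\gamma_\Phi^h)=i(\beta\Phi,[\gamma])$ and condition (4)'s bound on the number $L$ of vertical segments meeting zeros both come out of the combinatorics of the spine/critical graph of $\Phi$, whose complexity is controlled by the topological type of $S$ and the homotopy class $[\gamma]$.

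The genuinely new work, and the \emph{main obstacle}, is handling the punctures, where $\Phi$ may have a simple pole. Near such a puncture the foliation is one-pronged and the staircase cannot simply be pushed off to infinity as in the closed case; the curve must detour around a neighborhood of each puncture. Here I would fix the two nested puncture-neighborhoods: an inner $R_0=R_0([\gamma],\varepsilon)$ that $\gamma_\Phi$ avoids entirely (condition (1)), and an outer $R_1=R_1(\varepsilon)$ outside of which the main part $\gamma_\Phi^h$ lives (condition (2)). The detour around each puncture is where the \emph{additional} horizontal part $\widetilde\gamma_\Phi^h$ is born: rerouting the curve around the boundary circle $\partial B_{|\Phi|}(q,R)$ of a puncture-neighborhood forces the introduction of a bounded number of extra horizontal segments. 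The count $m=m([\gamma],\varepsilon)$ is bounded because the number of times $[\gamma]$ must pass near a puncture is a topological quantity depending only on $[\gamma]$, refined by $\varepsilon$; and these segments can be kept off a neighborhood of the zeros by a small perturbation. To make condition (3)'s estimate $i(\beta\Phi,\widetilde\gamma_\Phi^h)<\varepsilon$ work, I would choose $R_0,R_1$ small enough in the $|\Phi|$-metric so that the transverse $\beta\Phi$-measure picked up on each detour is tiny; \Cref{lemma:Rneighborhood}, which bounds $\ell_{|\Phi|}(\partial B_{|\Phi|}(q,R))\leq L_1 R$ uniformly over all $\Phi\in\QD(\sigma)$, is precisely what lets me do this \emph{uniformly in $\Phi$}, so that the same $\varepsilon$-threshold holds for every $\Phi\in\SQD(\sigma)$. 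This uniformity is the crux: it is why $c_0<\varepsilon$ rather than $c_0=0$, and why the bound survives when we later rescale along the harmonic maps ray.

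For condition (4) I would separately bound the vertical arcs. Their total transverse measure with respect to the \emph{horizontal} foliation, namely $i(\beta(-\Phi),\gamma_\Phi^v)$, is their total $|\Phi|$-horizontal extent; since the vertical connectors only need to bridge between consecutive horizontal leaves of the staircase and circumnavigate a controlled neighborhood of the singular set, their total length is bounded by a constant $K=K([\gamma])$ determined by the combinatorial complexity of the staircase and hence by $[\gamma]$ alone. The bound on the number of vertical segments meeting zeros, $L=L([\gamma])$, follows from the fact that the staircase crosses the critical graph of $\Phi$ a number of times controlled by $[\gamma]$. I expect the technical care to concentrate on two points: first, verifying that the detours and perturbations do not alter the homotopy class, so that $\gamma_\Phi\in[\gamma]$ and the minimal-intersection identity in (3) is genuinely preserved; and second, making every constant's dependence honest—$\delta$ and $R_1$ on $\varepsilon$ only, $R_0$ on $([\gamma],\varepsilon)$, and $K,L$ on $[\gamma]$ only—so that when this representative is fed back into the length estimate of \Cref{prop:ExponentialEstimateHoriArc} and summed over the finitely many arcs, the error terms collapse to the claimed asymptotics $k_0\searrow1$ and $\eta\|\Phi(\rho)\|^{-1/2}\to c_0<\varepsilon$.
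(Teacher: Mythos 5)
Your plan reproduces the paper's skeleton — start from a staircase representative away from the singular set, detour around each puncture along $\partial B_{|\Phi|}(q,r)$, and invoke \Cref{lemma:Rneighborhood} to keep the added horizontal measure below $\varepsilon$ — but it misidentifies, and therefore omits, the mechanism that makes all the constants uniform in $\Phi$. \Cref{lemma:Rneighborhood} only bounds the length of a single detour circle. The paper's uniformity comes from compactness of $\SQD(\sigma)$: one fixes $\Phi_0$, chooses a separation constant $\delta'(\Phi_0)$ and a neighborhood $\mathcal{N}$ of $\Phi_0$ on which the singular set varies tamely (components of $N(\Sigma(\Phi),\delta')$ correspond bijectively to those of $N(\Sigma(\Phi_0),\delta')$, critical vertical leaves in it have uniformly bounded length, etc.), proves the proposition for all $\Phi\in\mathcal{N}$, and then covers $\SQD(\sigma)$ by finitely many such neighborhoods. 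Relatedly, your assertion that the number $m$ of detours is ``a topological quantity depending only on $[\gamma]$'' is not correct as stated: in the paper this count is metric, not topological. It rests on a separate claim that each component $C$ of the singular neighborhood sits inside a convex disk $D$, that any excursion of the $|\Phi|$-geodesic leaving and re-entering $D$ has $|\Phi|$-length at least $\delta'$, and that $\ell_{|4\Phi|}([\gamma])$ is bounded over $\SQD(\sigma)$ (compactness again), giving a bound of the form length$/\delta'$. Moreover, near a simple pole the ball $B_{|\Phi|}(p,\delta')$ around a nearby zero $p$ need not be convex, and the paper has to pass to a double cover branched at the puncture to run this count — a step with no counterpart in your proposal.

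The second gap is consequential downstream. Condition (2) requires $\widetilde{\gamma}_\Phi^h$ to stay a definite distance from the zeros of $\Phi$, and this is exactly what \Cref{remark:ConstantEta2} uses to obtain uniform constants $C,D$ in \cref{eq:CandD} via \Cref{prop:ExponentialEstimateHoriArc}. Your proposal secures this ``by a small perturbation,'' but that cannot be made uniform: when a puncture $q$ is a regular point or a zero of the limiting differential $\Phi_0$, zeros of nearby $\Phi\in\mathcal{N}$ split off from $q$ and can approach the detour circle arbitrarily closely, so no perturbation of fixed size works for all $\Phi$ at once. The paper handles this by building into the definition of $\mathcal{N}$ the requirement that all zeros splitting off from $q$ lie in $B_{|\Phi|}(q,r/2)$, so the detour along $\partial B_{|\Phi|}(q,r)$ is automatically at $|\Phi|$-distance at least $r/2$ from them. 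Finally, you do not address how the horizontal measure accumulated \emph{inside} the singular neighborhoods is removed: the paper deforms each component of the curve there and ``drags'' its horizontal measure out of $C$ by adding vertical arcs, which is what makes the identity $i(\beta\Phi,\gamma_\Phi^h)=i(\beta\Phi,[\gamma])$ exact while keeping $\gamma_\Phi^h$ disjoint from the zero-neighborhoods and $R_1$; without it, condition (3) only holds approximately.
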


	\begin{proof}
		We fix $\Phi_0\in\SQD(\sigma)$. By the compactness of $\SQD(\sigma)$, it is enough to show that the claim holds on a neighborhood of $\Phi_0$.

		For a quadratic differential $\Phi \in \SQD(\sigma)$, we let
		\begin{align*}
			Z(\Phi)&=\{\text{zeros of }\Phi\text{ in }M\}, \\
			\Preg(\Phi)&=\{\text{punctures which are regular points or zeros of } \Phi\},\\
			\Ppole(\Phi)&=\{\text{punctures which are poles of }\Phi\}, \\
			\Sigma(\Phi)&= Z(\Phi)\cup\Preg(\Phi)\cup\Ppole(\Phi),  \text{the singular set of }\Phi.
		\end{align*}
		Furthermore, we pick a positive constant $\delta'=\delta'(\Phi_0)$ such that, for all distinct $p_i, p_j \in \Sigma(\Phi_0)$, the $|\Phi_0|$-distance between $B_{|\Phi_0|}(p_i,2\delta')$ and $B_{|\Phi_0|}(p_j,2\delta')$ is at least $2\delta'$, and also
		\begin{equation*}
			\delta'(\Phi_0) < \Bigl(\min_{\Phi\in\SQD(\sigma)} \inj|\Phi|\Bigr)/(8g-5+2n),
		\end{equation*}
		where we set $\inj|\Phi|\coloneqq\inf_{[\gamma]\in\mathcal{C}}\ell_{|\Phi|}([\gamma])/2$. We may assume  $\varepsilon<\delta'$, since $\varepsilon>0$ is arbitrary. Then, we pick a sufficiently small neighborhood $\mathcal{N}$ of $\Phi_0$ in $\SQD(\sigma)$ so that every $\Phi\in\mathcal{N}$ satisfies the following conditions:
		\begin{enumerate}
			\item The connected components of
			\begin{equation*}
				N(\Sigma(\Phi_0),\delta')\coloneqq \bigcup_{p\in\Sigma(\Phi_0)} B_{|\Phi_0|}(p,\delta')
			\end{equation*}
			bijectively correspond to the connected components of
			\begin{equation*}
				N(\Sigma(\Phi),\delta')\coloneqq \bigcup_{p\in\Sigma(\Phi)} B_{|\Phi|}(p,\delta')
			\end{equation*}
			by the correspondence between $\Sigma(\Phi)$ and $\Sigma(\Phi_0)$.
			\item If we fill the punctures, every connected component $C$ of $N(\Sigma(\Phi),\delta')$  is a topologically disk, and the $|\Phi|$-distance between $C$ and the other connected components  is at least $\delta'$.
			\item The total length of critical vertical leaves of $\Phi$ contained in $N(\Sigma(\Phi),\delta')$ is uniformly bounded from above by a constant $K_1=K_1(\Phi_0)$.
			\item The zeros of $\Phi$ splitting off (see Figure \ref{fig:SplittingZeros}) from $q\in\Preg(\Phi_0)$ is contained in $B_{|\Phi|}(q,r/2)$, where $r$ is a sufficiently small constant which depends only on $[\gamma]$, $\varepsilon$ and $\Phi_0$ and is defined later in \cref{eq:definitionofr}.
		\end{enumerate}
		\begin{figure}[t]
			\centering
			\includegraphics[scale=1.5]{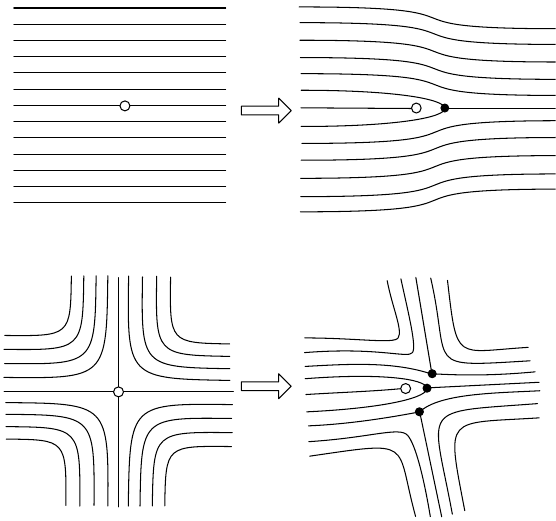}
			\caption{The zeros splits off from a puncture.}
			\label{fig:SplittingZeros}
		\end{figure}

		Under the preparation, we describe the construction of the representative $\gamma_\Phi$ for each $\Phi\in\mathcal{N}$. First, we beggin with the $|\Phi|$-geodesic representative $\Gamma_\Phi$ of $[\gamma]$.
		Note that $\Gamma_\Phi$ may touch some punctures and cannot be realized on $M$ in a strict sense (see Figure \ref{figure:GeodesicRepre}).
		(If all of punctures of $S$ are simple poles of $\Phi$, there exists a strict $|\Phi|$-geodesic representative  of $[\gamma]$ in $M$, see \cite{syau1996existence}.)
		\begin{figure}[t]
			\centering
			\includegraphics[scale=1.2]{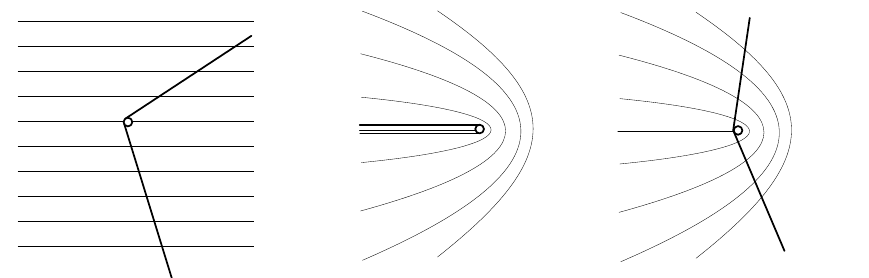}
			\caption{Left and middle: we illustrate examples of the geodesic representative in a slightly broad sense around a puncture. Right: there is no such a representative in the right of the figures.}
			\label{figure:GeodesicRepre}
		\end{figure}
		Each segment of the geodesic representative $\Gamma_\Phi$  outside of $N(\Sigma(\Phi),\delta')$ is a Euclidean straight line segment, so we replace such a straight segment of $\Gamma_\Phi$ with a $\Phi$-\textit{staircase curve} which is union of horizontal and vertical arcs of $\Phi$.
		Let $\Gamma_\Phi'$ be the resulting curve. Then
		\begin{equation*}
			i(\beta\Phi,\Gamma'_\Phi)=i(\beta\Phi,[\gamma])\ \text{and}\ i(\beta(-\Phi),\Gamma'_\Phi)<\max_{\Phi\in\SQD(\sigma)} \ell_{|4\Phi|}([\gamma]).
		\end{equation*}

		Next, let $C$ be a connected component of $N(\Sigma(\Phi),\delta')$. Then, the number of punctures contained in $C$ is at most one.

		\begin{claim*}
			The number of connected components of $\Gamma_\Phi'\cap C$ is, at most, $\alpha=\alpha([\gamma],\delta')$.
		\end{claim*}

		\begin{proof}
			 Case I. We suppose that $C$ contains a puncture $q$. Then, $C$ contains, at most, $4g-4+n$ zeros of $\Phi$. Therefore, $C$ is contained in the disk
			\begin{equation}\label{eq:DefinitionOfD}
				D\coloneqq B_{|\Phi|}(q,(8g-7+2n)\delta').
			\end{equation}
			The disk $D$ does not contain the other puncture and $D$ is embedded into $M$, since
			$(8g-7+2n)\delta'<\inj|\Phi|$. Let $p$ be a zero of $\Phi$ contained in $C$. Let $\Gamma_\Phi''$ denote a connected component of $\Gamma_\Phi\cap D$.
			Notice that $B_{|\Phi|}(p,\delta')$ may not be a convex set, since it can contain a pole of $\Phi$ at the puncture $q$. However, considering a double branched covering of $D$, we find that the number of the connected components of $\Gamma_\Phi''\cap B_{|\Phi|}(p,\delta')$ is at most two (see Figure \ref{fig:DoubleBranched}).
			\begin{figure}
				\centering
				\begin{overpic}
					[scale=1.3]{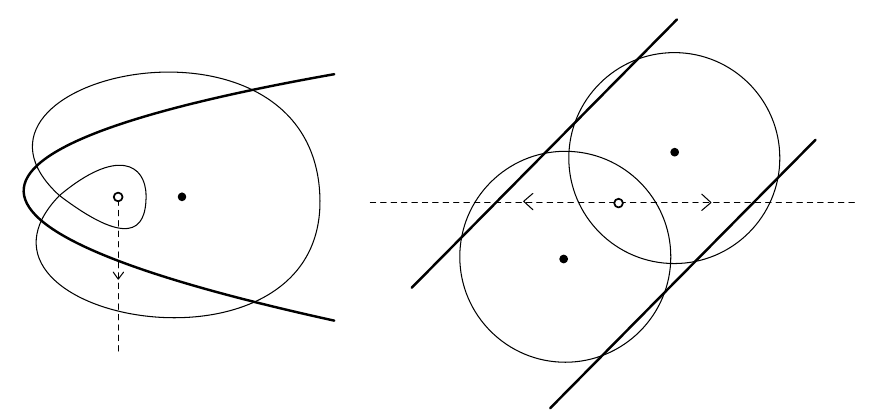}
					\put(14,26){$q$}
					\put(22,26){$p$}
					\put(68,26){$\tilde{q}$}
					\put(61.5,17){$\tilde{p}$}
					\put(79,32){$\tilde{p}$}
				\end{overpic}
				\caption{Left: it is difficult to directly understand the $\delta'$-neighborhood of a zero near a pole of $\Phi$. Right: the lift of the double cover branched at the puncture $q$.}
				\label{fig:DoubleBranched}
			\end{figure}
			Thus, the number of connected components of $\Gamma_\Phi''\cap C$ is at most $2(4g-3+n)$, where $4g-3+n$ is the maximum of the number of the disks constituting $C$. Let $\gamma'$ denote a subarc  of $\Gamma_\Phi$ which leaves $D$ and comes back to $D$.
			The endpoints of $\gamma'$ is on $\partial D$.
			Then, since $D$ is convex and $\Gamma_\Phi$ is geodesic, $\gamma'$ is not isotopic to $\partial D$ rel the endpoints.
			Thus, we see that the $|\Phi|$-length of $\gamma'$ is at least $\delta'$.
			By the above discussion, we find that, every time $\Gamma_\Phi$ intersects $D$, the number of the connected components of $C\cap\Gamma_\Phi$ increases by at most $8g-6+2n$ and also the number of the connected components of $\Gamma_\Phi\cap D$ is at most
			\begin{equation}\label{eq:boundednumber}
				\biggl\lceil\max_{\Phi\in\SQD(\sigma)}\{\ell_{|4\Phi|}([\gamma])\}/\delta'(\Phi_0)\biggr\rceil.
			\end{equation}

			Case II. Suppose next that $C$ contains no punctures.
			Then, $C$ contains, at most $4g-4+n$, zeros of $\Phi$.
		 	Therefore, the $|\Phi|$-diamiter of $C$ is, at most, $(8g-8+2n)\delta'$.
			Hence, $C$ is contained in a ball $B= B_{|\Phi|}(p_0,(4g-4+n)\delta')$ for a point $p_0\in M$. Then the ball contains at most one puncture, since $(8g-8+2n)\delta'<(8g-5+2n)\delta'<\inj|\Phi|$.

			Then we have two cases.
			First, suppose that $B$ contains a puncture $q$. Then
			\begin{equation*}
				D= B_{|\Phi|}(q,(8g-7+2n)\delta')
			\end{equation*}
			contains $B$, and $D$ contains no other punctures. Therefore $D$ is convex, and thus we can apply the argument of Case I to $D$.
			However, $B_{|\Phi|}(p,\delta')$ is now convex for a zero $p$ of $\Phi$ contained in $C$, so we do not need to take a double branched covering.
			Thus we obtain the desired upper bounds.
			Second, suppose that $B$ contains no punctures. If
			the neighborhood $N_{|\Phi|}(B,\delta')$ of $B$ contains a puncture $q$, $B$ is contained in the disk
			\begin{equation*}
				D= B_{|\Phi|}(q,(8g-7+2n)\delta').
			\end{equation*}
			Therefore we can show the desired claim as in the first case that $C$ contains a puncture $q$.
			If the neighborhood $N_{|\Phi|}(B,\delta')$ of $B$ does not contain a puncture, each connected component of $\Gamma_\Phi\setminus B$ has length at least $\delta'$.
			Then, setting $D=B$, we again apply the argument of Case I to $D$.
 		\end{proof}
		Let $\overline{\gamma}$ denote a connected component of $\Gamma_\Phi'\cap C$. If a vertical arc of $\Phi$ contains a singular point of $\Phi$ as its end point, then it is called a \textit{critical vertical arc}. Then, either (i) $\overline{\gamma}$ has at most one intersection with each critical vertical arc contained in $C$, or (ii) $\overline{\gamma}$ is a union of critical vertical arcs.
		We leave the curves in the case (ii), and deform the curves in the case (i). The number of the critical vertical arcs contained in $C$ is, at most, $3(4g-4+n)+1$, so the number of the intersections of $\overline{\gamma}$ and the critical vertical arcs is clearly finite. Then, we divide $\overline{\gamma}$ at these intersection points into finitely many segments. For each such segment of $\overline{\gamma}$, we replace it with a curve consisting of horizontal arcs and vertical arcs of $\Phi$. Then, we drag the horizontal measure out of $C$ by adding vertical arcs (Figure \ref{fig:DraggingOut}).
		\begin{figure}[t]
			\centering
			\includegraphics[scale=1.2]{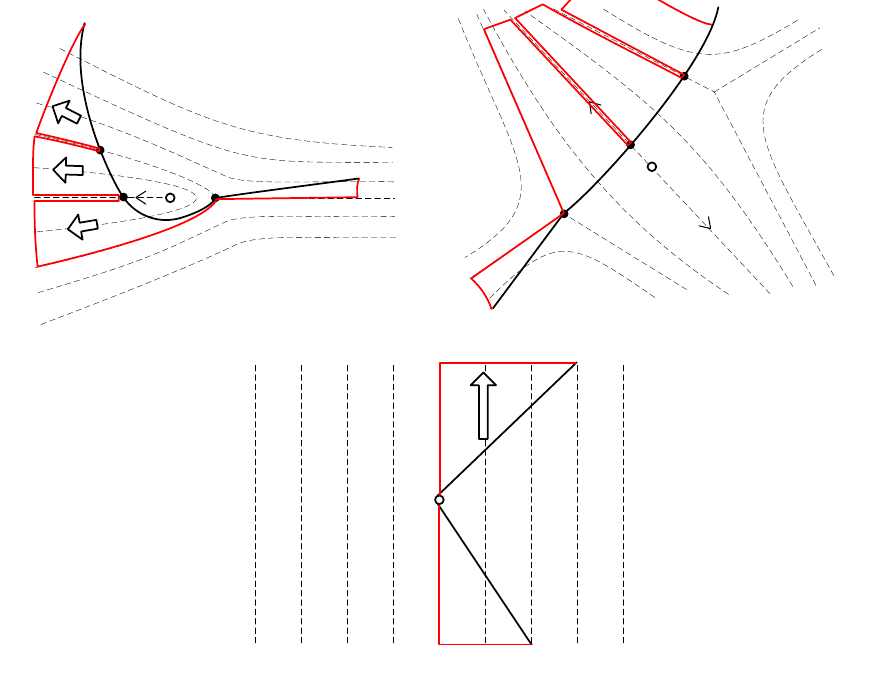}
			\caption{The dotted lines denote vertical leaves of $\Phi$. The black points denote intersections of critical vertical arcs and $\overline{\gamma}$. The upper right figure illustrates the lift of the foliation in the upper left figure.}
			\label{fig:DraggingOut}
		\end{figure}
		Then we set
		\begin{equation*}
			R_1=\bigcap_{\Phi\in\mathcal{N}}\left(\bigcup_{q\in\overline{S}-S}B_{|\Phi|}(q,\delta')\right).
		\end{equation*}
		Let $\widehat{\gamma}$ denote the resulting curve from $\overline{\gamma}$.

		Next, we add the following operation, which is denoted by $(\ast)$, for each the connected component $\alpha$ of $\widehat{\gamma}\cap B_{|\Phi|}(q,r)$:
		\begin{enumerate}
			\item isotope $\alpha$ to $\partial B_{|\Phi|}(q,r)$ rel the endpoints, and
			\item then further isotope $\alpha$ on $\partial B_{|\Phi|}(q,r)$ to a union of vertical and horizontal arcs tangent to $\partial B_{|\Phi|}(q,r)$ (see Figure \ref{fig:Rsetting}).
		\end{enumerate}
		By this operaton ($\ast$), $\gamma_\Phi$ does not enter the $r$-neighborhood of the puncture $q$. Since, by \Cref{lemma:Rneighborhood}, the $|\Phi|$-length of $\partial B_{|\Phi|}(q,r)$ is at most $L_1 r$, this operation $(\ast)$ increases the horizontal measure by at most $L_1 r$. Thus, if we take $r$ so that
		\begin{equation}\label{eq:definitionofr}
			m_1 \alpha L_1 r <\varepsilon,
		\end{equation}
		then the increase due to this operation ($\ast$) is less than $\varepsilon$, where $m_1$ is the upper bounds on the total number of the operation $(\ast)$ all over each $\widehat{\gamma}$ and $\alpha$ is as in the above claim. Moreover, $m_1$ depends only on the topology of $M$. Thus, we set a domain $R_0$ by
		\begin{equation*}
			R_0=\bigcap_{\Phi\in\mathcal{N}}\left(\bigcup_{q\in\overline{S}-S}B_{|\Phi|}(q,r)\right)
		\end{equation*}
		and define $\gamma_\Phi$ as the resulting representative of $[\gamma]$. The constant $\delta$ and the domain $R_1$ in \Cref{lemma:StaircaseReoresentative} is determined by $\delta'$.
		\begin{figure}
			\centering
			\includegraphics[scale=1.2]{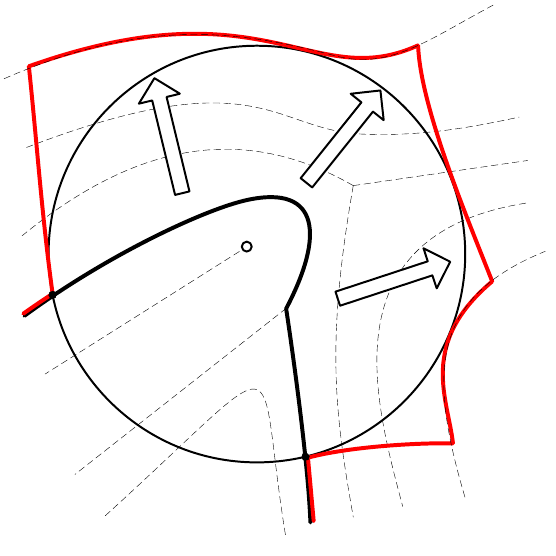}
			\caption{This is an example of a vertical arc in $B_{|\Phi|}(q,r)$. By the operation $(\ast)$, we let the curve not enter $B_{|\Phi|}(q,r)$.}
			\label{fig:Rsetting}
		\end{figure}
	\end{proof}

	Next, we describe the constant $k_0$. For $\rho\in\Teich_{g,n}$, we set
	\begin{equation*}
		M'= M- R_1,\ M_{\delta}(\rho)= M'-\bigcup_{p\in Z(\Phi)} B_\sigma(p,\delta).
	\end{equation*}
	Furthermore, we define a function $k_2$ by
	\begin{equation*}
		k_2(\rho)= \max_{p\in M_{\delta(\rho)}}\frac{\|(\pdv{x})_p\|_{\rho}}{\|(\pdv{x})_p\|_{|4\Phi(\rho)|}}.
	\end{equation*}

	\begin{claim*}
		The function $k_2$ is upper semicontinuous in $\Teich_{g,n}$.
	\end{claim*}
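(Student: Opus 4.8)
The plan is to reduce the ratio defining $k_2$ to a function of the single pointwise quantity $|\nu(\rho)|$, and then to run a compactness argument on the fixed compact core $M'$. The only delicate feature is that the excised region $\bigcup_{p\in Z(\Phi(\rho))}B_\sigma(p,\delta)$ jumps as the zeros of $\Phi(\rho)$ collide or split, which is precisely why only \emph{upper} (and not lower) semicontinuity can be expected.

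First, combining \eqref{eq:Dilatation1} and \eqref{eq:Dilatation2}, at a point $p$ with $\Phi(\rho)(p)\neq 0$ the integrand
\[
	g(\rho,p):=\frac{\|(\pdv{x})_p\|_{\rho}}{\|(\pdv{x})_p\|_{|4\Phi(\rho)|}}
\]
equals $(1-m^2)^{-1/2}$, where $m=\|(\pdv{y})_p\|_{\rho}/\|(\pdv{x})_p\|_{\rho}=(1-|\nu(\rho)(p)|)/(1+|\nu(\rho)(p)|)$. Thus $g$ is a continuous, finite function of $|\nu(\rho)(p)|\in(0,1)$ that blows up only as $p$ approaches a zero of $\Phi(\rho)$ (there $|\nu(\rho)|\to 0$). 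On the compact set $M_\delta(\rho)$, which keeps $\sigma$-distance $\delta$ from all zeros, $g(\rho,\cdot)$ is therefore continuous, and the maximum $k_2(\rho)$ is attained. Here I use that $M'=M-R_1$ is a fixed compact core (as $R_1$ and $\delta$ are fixed), and that $|\nu(\rho)(p)|$ is jointly continuous in $(\rho,p)$ on $\{\Phi(\rho)(p)\neq 0\}$: since $\Phi$ is a homeomorphism, $\rho_n\to\rho_0$ forces $\Phi(\rho_n)\to\Phi(\rho_0)$, and the energy densities $H(\rho),L(\rho)$ — hence $|\nu(\rho)|^2=L(\rho)/H(\rho)$ by \Cref{prop:formulae}~(IV) — depend continuously on the target metric away from the zero set.

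To prove upper semicontinuity at $\rho_0$, I would take an arbitrary sequence $\rho_n\to\rho_0$, pass to a subsequence realizing $\limsup_n k_2(\rho_n)$, choose maximizers $p_n\in M_\delta(\rho_n)$, and extract a further subsequence with $p_n\to p_0\in M'$. The key step is to show $p_0\in M_\delta(\rho_0)$. Since $\QD(\sigma)$ is finite dimensional, $\Phi(\rho_n)\to\Phi(\rho_0)$ locally uniformly as holomorphic sections, so by Hurwitz's theorem every zero $z$ of $\Phi(\rho_0)$ is a limit of zeros $z_n\in Z(\Phi(\rho_n))$. As $p_n\in M_\delta(\rho_n)$ gives $d_\sigma(p_n,z_n)\geq\delta$, passing to the limit yields $d_\sigma(p_0,z)\geq\delta$ for every zero $z$ of $\Phi(\rho_0)$; hence $p_0\in M_\delta(\rho_0)$. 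Joint continuity of $g$ at $(\rho_0,p_0)$ then gives $\limsup_n k_2(\rho_n)=\lim_n g(\rho_n,p_n)=g(\rho_0,p_0)\leq k_2(\rho_0)$, which is the assertion.

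The step I expect to be the main obstacle is controlling the limit maximizer $p_0$ against the varying excised region, i.e. ruling out that a maximizer escapes toward a point that only becomes a zero in the limit; this is exactly what the Hurwitz argument settles. It also clarifies the asymmetry: when several simple zeros of $\Phi(\rho_n)$ merge into a higher-order zero of $\Phi(\rho_0)$, the domain $M_\delta(\rho_n)$ is strictly smaller than $M_\delta(\rho_0)$ near the collision, so the maximum can jump upward in the limit, and only the upper bound survives. A secondary point requiring care is the justification of the joint continuity of $|\nu(\rho)(p)|$ invoked above, which rests on the continuous dependence of the harmonic map data on the target metric.
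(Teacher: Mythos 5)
Your proposal is correct and is essentially the argument the paper intends: the paper's own ``proof'' is just a pointer to \cite[p.~464]{wolf1989teichmuller}, and Wolf's reasoning there is exactly your combination of (i) reducing the stretch ratio to a continuous function of $|\nu(\rho)(p)|$, which blows up only at zeros of $\Phi(\rho)$, and (ii) a compactness/maximizer argument in which Hurwitz-type convergence of zeros (via $\Phi(\rho_n)\to\Phi(\rho_0)$ in the finite-dimensional space $\QD(\sigma)$) forces limit maximizers into $M_\delta(\rho_0)$, which is also the correct explanation of why only \emph{upper} semicontinuity holds when zeros collide. The one ingredient you invoke without full proof --- the continuous dependence of $H(\rho)$, $L(\rho)$, hence $|\nu(\rho)|$, on the target metric away from the zero set --- is likewise taken for granted by the paper, so this is not a deviation from its approach.
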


	\begin{proof}
		This proof is similar to that in \cite{wolf1989teichmuller}. \cite[p.464]{wolf1989teichmuller}.
	\end{proof}

	Hence, defining a function $\kappa_A:\SQD(\sigma) \to\R_{\geq0}$ for $A>0$ by
	\begin{equation*}
		\kappa_A(\Phi_0)\coloneqq k_2(\rho(A\Phi_0)),
	\end{equation*}
	we find that $\kappa_A$ is the upper semicontinuous function on $\SQD(\sigma)$. Since a upper semicontinuous function on a compact set has a maximum, we set
	\begin{equation*}
		k_0(A,\varepsilon)= \max_{\Phi_0\in\SQD(\sigma)}\kappa_A(\Phi_0).
	\end{equation*}
	Then, by \Cref{prop:NormConvergence}, for every $p\in M_\delta(\Phi_0)$,
	\begin{equation*}
		\frac{\|(\pdv{x})_p\|_{\rho(A\Phi_0)}}{\|(\pdv{x})_p\|_{|4A\Phi_0|}} \searrow 1\ \ \text{as }A\to\infty.
	\end{equation*}
	Therefore $\kappa_A(\Phi_0)\searrow 1$ as $A\to\infty$. Since $\{\kappa_A\}$ converges uniformly on $\SQD(\sigma)$ by Dini's theorem, we find that $k_0\searrow 1$ as $A\to\infty$. Thus we have
	\begin{align*}
		\ell_\rho(\gamma^h_{\Phi_0(\rho)})
		&=\sum_{\gamma\subset\gamma^h_{\Phi_0}} \int_{\gamma}\left\|\pdv{x}\right\|_{\rho}ds \\
		&\leq\sup_{\gamma_{\Phi_0}^h}\frac{\|(\pdv{x})_p\|_{\rho}}{\|(\pdv{x})_p\|_{|4\Phi(\rho)|}}\sum\int_{\gamma}\left\|\pdv{x}\right\|_{|4\Phi(\rho)|}ds\\
		&\leq k_2(\rho)i(\beta\Phi(\rho),\gamma_{\Phi_0}^h)\\
		&\leq k_0(A,\varepsilon) i(\beta\Phi(\rho),\gamma_{\Phi_0}^h),
	\end{align*}
	where $\Phi_0(\rho)\coloneqq\Phi(\rho)/\|\Phi(\rho)\|$.

	Finally, we describe the way of setting the constant $\eta=\eta([\gamma],\varepsilon,A)$. By \Cref{prop:LengthConvergence}, for any $\Phi_0\in\SQD(\sigma)$
	\begin{equation*}
		\ell_{\rho(A\Phi_0)}(\gamma_{\Phi_0}^v)A^{-1/2}\to0\ \ \text{as } A\to\infty.
	\end{equation*}
	Since $\gamma_{\Phi_0}$ continuously changes with respect to $\Phi_0$ and $\gamma_{\Phi_0}^v$ contains at most finite zeros of $\Phi_0$, the total $\sigma$-length of $\gamma_{\Phi_0}^v$ is bounded from above. Therefore, we conclude that there exists a constant $\eta_1([\gamma],A)$ such that
	\begin{equation*}
		\sum_{\gamma\subset\gamma_{\Phi_0(\rho)}^v}\ell_{\rho}(\gamma) < \eta_1\ \text{and\ \  }\eta_1A^{-1/2}\to0\ \ \ \text{as }A\to\infty.
	\end{equation*}
	By \Cref{prop:ExponentialEstimateHoriArc} and the construction of the representative $\gamma_\Phi\in[\gamma]$ for $\Phi\in\SQD(\sigma)$, we can take constants $C$ and $D$ (see \Cref{remark:ConstantEta2}) such that, for every $\Phi \in \SQD(\sigma)$,
	\begin{equation}\label{eq:CandD}
		\ell_{\rho(t\Phi)}(\widetilde{\gamma}_\Phi^h)<t^{1/2}i(\beta\Phi,\widetilde{\gamma}_\Phi^h)(1+Ce^{-D\sqrt{t}}).
	\end{equation}
	Therefore, if we set
	\begin{equation*}
		c_0=c_0([\gamma],\varepsilon)=\sup_{\Phi\in\SQD(\sigma)}i(\beta\Phi,\widetilde{\gamma}_\Phi^h)\
	\end{equation*}
	and
	\begin{equation*}
		\eta_2=\eta_2([\gamma],\varepsilon,A)= A^{1/2}c_0(1+Ce^{-D\sqrt{t}}),
	\end{equation*}
	then, for every $\Phi \in \SQD(\sigma)$,
	\begin{equation*}
		\ell_{\rho(t\Phi)}(\widetilde{\gamma}_\Phi^h)<\eta_2\ \  \text{and}\ \ \eta_2A^{-1/2}\to c_0 <\varepsilon
	\end{equation*}
	as $A\to\infty$. Hence, setting $\eta\coloneqq\eta_1+\eta_2$, we have $\eta A^{-1/2}\to c_0$ as $A\to\infty$. Thus, for every $\rho\in\rho(A\cdot\SQD(\sigma))$,
	\begin{align*}
		\ell_\rho([\gamma])
			&\leq\ell_{\rho}(\gamma_{\Phi_0(\rho)})\\
			&=\ell_\rho(\gamma_{\Phi_0(\rho)}^h)+\ell_\rho(\widetilde{\gamma}_{\Phi_0(\rho)}^h) + \ell_{\rho}(\gamma_{\Phi_0(\rho)}^v)\\
			&<k_0i(\beta\Phi(\rho),[\gamma])+\eta,
	\end{align*}
	and the proof is complete.
\end{proof}

\begin{remark}\label{remark:ConstantEta2}
	Here, we describe the setting of $\eta_2$, namely the constants $C$ and $D$ in \Cref{eq:CandD}.
	In the proof of \Cref{prop:ExponentialEstimateHoriArc}, $D$ is determined by the $|\Phi|$-distance from a horizontal arc to zeros of $\Phi$. Now, for each $\Phi\in\SQD(\sigma)$,  $\widetilde{\gamma}_\Phi^h$ is at least $r/2$ away from zeros of $\Phi$ in the $|\Phi|$ metric, so the constant $D$ can be taken uniformly on $\SQD(\sigma)$. Moreover, we take the constant $C$ as follows.
	We set $r'>0$ so that, for every $\Phi\in\SQD(\sigma)$ and every $p\in Z(\Phi)$,
	\begin{equation*}
		B_\sigma(p,r')\subset B_{|\Phi|}(p,r/2).
	\end{equation*}
	By the same manner as the upper semicontinuity of $k_2$, we conclude that the map
	\begin{equation*}
		\Phi \mapsto \min_{p\in M_{r'}(\rho(\Phi))} |\nu(\Phi)|
	\end{equation*}
	is lower semicontinuous on $\QD(\sigma)$.
	Therefore, there exists a lower bound of
		$\displaystyle\min_{p\in M_{r'}(\rho(\Phi))} |\nu(\Phi)(p)|$
	for $\Phi\in\SQD(\sigma)$.

	Thus we find that $|\nu(\Phi)| \geq C_1 > 0$ and $\log |\nu(\Phi)|^{-1}\leq C_2 < +\infty$ on $\widetilde{\gamma}_\Phi^h\  (\subset M_{r'}(\rho(\Phi)))$ for every $\Phi\in\SQD(\sigma)$. Therefore we obtain the desired constant $C$, since it is given by $C_1$ and $C_2$ as in the proof of \Cref{prop:ExponentialEstimateHoriArc}.
\end{remark}

\subsection{The extended homeomorphism}

The following lemma is an extension of \cite[Lemma 4.7]{wolf1989teichmuller} for punctured surfaces.

\begin{prop}\label{lemma:ConvergenceEquivalence}
	Let $\{\rho_i\}\subset\Teich_{g,n}$ be a sequence diverging to $\infty$ (i.e. it leaves every compact in $\Teich_{g,n}$). Then $\{\pi\circ\ell(\rho_i)\}$ converges in $\PMF$ if and only if $\{\pi\circ\beta\Phi(\rho_i)\}$ converges in $\PMF$. Moreover, their limits in $\PMF$ coincide when they converges.
\end{prop}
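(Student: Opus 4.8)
The plan is to use \Cref{prop:FundamentalInequality} to compare the two functionals after the correct renormalization, and then to upgrade pointwise convergence to projective convergence by a compactness argument on $\SQD(\sigma)$. First, since $\Phi$ is a homeomorphism and $\QD(\sigma)$ is a finite-dimensional vector space, a sequence $\{\rho_i\}$ diverging to $\infty$ in $\Teich_{g,n}$ satisfies $t_i\coloneqq\|\Phi(\rho_i)\|\to\infty$. I set $\Phi_i\coloneqq\Phi(\rho_i)/t_i\in\SQD(\sigma)$. Because $\SQD(\sigma)$ is compact, it suffices to treat an arbitrary subsequence along which $\Phi_i\to\Phi_\infty$ for some $\Phi_\infty\in\SQD(\sigma)$, and to show that along every such subsequence \emph{both} $\pi\circ\ell(\rho_i)$ and $\pi\circ\beta\Phi(\rho_i)$ converge to the single point $[\beta\Phi_\infty]\in\PMF$.

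For the central estimate, I would fix $[\gamma]\in\mathcal{C}$ and $\varepsilon>0$ and divide the bounds in \Cref{prop:FundamentalInequality} by $t_i^{1/2}$. Using the homogeneity relation $i(\beta\Phi(\rho_i),[\gamma])=t_i^{1/2}\,i(\beta\Phi_i,[\gamma])$, this becomes
\begin{equation*}
	i(\beta\Phi_i,[\gamma])\le t_i^{-1/2}\ell_{\rho_i}([\gamma])\le k_0\,i(\beta\Phi_i,[\gamma])+\eta\,t_i^{-1/2}.
\end{equation*}
Letting $i\to\infty$, I have $k_0\searrow 1$, $\eta\,t_i^{-1/2}\to c_0([\gamma],\varepsilon)<\varepsilon$, and $i(\beta\Phi_i,[\gamma])\to i(\beta\Phi_\infty,[\gamma])$ by continuity of $\beta$ and of the intersection pairing, so that
\begin{equation*}
	i(\beta\Phi_\infty,[\gamma])\le\liminf_i t_i^{-1/2}\ell_{\rho_i}([\gamma])\le\limsup_i t_i^{-1/2}\ell_{\rho_i}([\gamma])\le i(\beta\Phi_\infty,[\gamma])+c_0([\gamma],\varepsilon).
\end{equation*}
Since $[\gamma]$ is fixed while $\varepsilon>0$ is arbitrary with $c_0([\gamma],\varepsilon)<\varepsilon$, sending $\varepsilon\to0$ yields the exact limit $t_i^{-1/2}\ell_{\rho_i}([\gamma])\to i(\beta\Phi_\infty,[\gamma])$ for every $[\gamma]\in\mathcal{C}$.

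It then remains to convert these two facts into projective convergence. On the one hand $\pi\circ\beta\Phi(\rho_i)=[\beta\Phi_i]$, and $\beta\Phi_i\to\beta\Phi_\infty\neq 0$ in $\MF$, so $\pi\circ\beta\Phi(\rho_i)\to[\beta\Phi_\infty]$. On the other hand $t_i^{-1/2}\ell_{\rho_i}(\cdot)\to i(\beta\Phi_\infty,\cdot)$ pointwise on $\mathcal{C}$, i.e. in the product topology of $\R_{\geq0}^{\mathcal{C}}$, toward a nonzero limit, whence $\pi\circ\ell(\rho_i)=[\ell_{\rho_i}(\cdot)]\to[\beta\Phi_\infty]$ as well. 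This proves the displayed claim for every convergent subsequence of $\{\Phi_i\}$. Consequently, inside the compact metrizable space $\overline{\Teich^{\mathrm{Th}}_{g,n}}$, the subsequential limits of $\{\pi\circ\ell(\rho_i)\}$ and those of $\{\pi\circ\beta\Phi(\rho_i)\}$ all lie in $\PMF$ and form one and the same set, namely $\{[\beta\Phi_\infty]:\Phi_\infty\text{ is a subsequential limit of }\{\Phi_i\}\}$. Since a sequence in a compact metric space converges exactly when it has a unique subsequential limit, the two sequences share this property, so one converges in $\PMF$ if and only if the other does, and their limits then coincide.

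The main obstacle is the weakening of the fundamental estimate: unlike \cite[Lemma 4.1]{wolf1989teichmuller}, here $\eta\,t_i^{-1/2}$ tends to $c_0$ rather than to $0$, reflecting the extra horizontal measure forced by routing the staircase representative $\gamma_\Phi$ around the simple poles at the punctures. The resolution is precisely that $c_0=c_0([\gamma],\varepsilon)$ can be taken below any prescribed $\varepsilon$, so for each individual curve $[\gamma]$ one recovers exact asymptotics by sending $\varepsilon\to0$ \emph{after} fixing the convergent subsequence $\Phi_i\to\Phi_\infty$; crucially, the limiting foliation $\beta\Phi_\infty$ is independent of $\varepsilon$, so no ambiguity survives. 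The only other ingredient is the continuity of $\Phi\mapsto i(\beta\Phi,[\gamma])$, which follows from $\beta\colon\QD(\sigma)\to\MF$ being a homeomorphism together with continuity of the intersection pairing.
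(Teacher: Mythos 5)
Your proof is correct, and it organizes the limiting argument genuinely differently from the paper, although both rest on the same two inputs: \Cref{prop:FundamentalInequality} and the homogeneity $i(\beta\Phi,\cdot)=\|\Phi\|^{1/2}\,i(\beta(\Phi/\|\Phi\|),\cdot)$. The paper proves the two implications separately and asymmetrically: assuming $\pi\circ\ell(\rho_n)$ converges, it chooses scalars $\lambda_n$ with $\lambda_n\ell(\rho_n)$ convergent in $\R_{\geq0}^{\mathcal{C}}$, and the main difficulty is then to bound $\lambda_n\|\Phi(\rho_n)\|^{1/2}$; this is done via an auxiliary uniform positivity bound, namely finitely many curves $[\gamma_1],\ldots,[\gamma_k]\in\mathcal{C}$ and $\delta>0$ with $\sum_j i(\beta\Phi_0,[\gamma_j])>\delta$ on all of $\SQD(\sigma)$, after which $(k_0-1)\lambda_n i(\beta\Phi(\rho_n),[\gamma])\to 0$ and $\lambda_n\eta$ becomes arbitrarily small, so $\lambda_n\ell(\rho_n)$ and $\lambda_n\beta\Phi(\rho_n)$ share a limit. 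You instead use the canonical normalization $\lambda_i=t_i^{-1/2}$, which makes the comparison of the two functionals immediate and dispenses with the filling system entirely; since neither projective sequence is assumed to converge a priori, you recover convergence from compactness of $\SQD(\sigma)$ (passing to subsequences with $\Phi_i\to\Phi_\infty$) together with the unique-subsequential-limit criterion inside the compact metrizable $\overline{\Teich_{g,n}^{\mathrm{Th}}}$ --- an external fact recalled in the background section, so there is no circularity. Your route buys both directions of the equivalence at once, plus an explicit identification of every subsequential limit as $[\beta\Phi_\infty]$, which is exactly the description of the boundary map needed for \Cref{thm:mainthm}; the paper's route buys independence from the global structure of the Thurston compactification, as it works entirely with renormalized functionals in $\R_{\geq0}^{\mathcal{C}}$. (If you wished to avoid quoting that compactness, you could instead note that $\Phi_0\mapsto[\beta\Phi_0]$ is a continuous injection of the compact $\SQD(\sigma)$ onto $\PMF$, hence a homeomorphism, and run the uniqueness argument in $\SQD(\sigma)$.) Finally, your ordering of limits is handled correctly: the sandwich bounds hold for each fixed $\varepsilon$, while the quantities being compared are $\varepsilon$-independent, so letting $\varepsilon\to 0$ after $i\to\infty$ is legitimate.
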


\begin{proof}
	We can take finitely many essential simple closed curves $[\gamma_1],\ldots,[\gamma_k]\in\mathcal{C}$ and a positive number $\delta>0$ so that
	\begin{equation*}
		\sum_{j=1}^k i(\beta\Phi_0,[\gamma_j])>\delta>0
	\end{equation*}
	holds for every $\Phi_0\in\SQD(\sigma)$.

	Suppose that $\pi\circ\ell(\rho_n)$ converges. Then, there exists a sequence $\{\lambda_n\}\subset\R_{>0}$ such that $\lambda_n\ell(\rho_n)$ converges in $\R_{>0}^{\mathcal{C}}$. Therefore, there exists $B>0$ such that, for each $n\in\N$,
	\begin{align}
		B
			&>\sum_{j=1}^k \lambda_n\ell_{\rho_n}([\gamma_j])  \notag\\
			&>\lambda_n\sum_{j=1}^ki(\beta\Phi(\rho_n),[\gamma_j]) \ \ \ (\text{By \Cref{prop:FundamentalInequality}}) \label{eq:IN}\\
			&=\lambda_n\|\Phi(\rho_n)\|^{1/2}\sum_{j=1}^k i(\beta(\Phi(\rho_n)/\|\Phi(\rho_n)\|),[\gamma_j])  \notag\\
			&>(\lambda_n\|\Phi(\rho_n)\|^{1/2})\delta. \notag
	\end{align}
	Hence, we see that
	\begin{equation*}
		\lambda_n\cdot\eta([\gamma],\varepsilon,\|\Phi(\rho_n)\|)
			<(B/\delta)\cdot\eta\|\Phi(\rho_n)\|^{-1/2}.
	\end{equation*}
	By \Cref{prop:FundamentalInequality},
	\begin{equation*}
		\lim_{n\to\infty}\eta\|\Phi(\rho_n)\|^{-1/2} = c_0([\gamma],\varepsilon) <\varepsilon.
	\end{equation*}
 Morever, for any $\varepsilon>0$ and $[\gamma]\in\mathcal{C}$,
	\begin{align*}
		\lambda_ni(\beta\Phi(\rho_n),[\gamma])
			&=\lambda_n\|\Phi(\rho_n)\|^{1/2}i(\beta(\Phi(\rho_n)/\|\Phi(\rho_n)\|),[\gamma]) \\
			&<(B/\delta)\max_{\Phi_0\in\SQD(\sigma)}i(\beta\Phi_0,[\gamma]).
	\end{align*}
	Thus by \Cref{prop:FundamentalInequality}, for any $\varepsilon>0$ and $[\gamma]\in\mathcal{C}$,
	\begin{equation*}
		|\lambda_n\ell_{\rho_n}([\gamma])-\lambda_ni(\beta\Phi(\rho_n),[\gamma])|
			<(k_0-1)\lambda_n i(\beta\Phi(\rho_n),[\gamma])+\lambda_n\eta
	\end{equation*}
	and
	\begin{equation*}
		(k_0-1)\lambda_n i(\beta\Phi(\rho_n),[\gamma])\to 0
	\end{equation*}
	as $n\to\infty$. Therefore, for any $\varepsilon>0$ and $[\gamma]\in\mathcal{C}$,
	\begin{equation*}
		\lim_{n\to \infty}(\lambda_n\ell_{\rho_n}([\gamma])-\lambda_ni(\beta\Phi(\rho_n),[\gamma]))<\varepsilon.
	\end{equation*}
	Since we can take arbitraly small $\varepsilon>0$, we have
	\begin{equation*}
		\lim_{n\to \infty}\lambda_n\ell(\rho_n)=\lim_{n\to\infty}\lambda_n\beta\Phi(\rho_n)\ \  \text{in}\ \R_{\geq0}^\mathcal{C}.
	\end{equation*}
	Thus we conclude that
	\begin{equation*}
		\lim_{n\to\infty}\pi\circ\beta\Phi(\rho_n)=\lim_{n\to\infty}\pi\circ\ell(\rho_n).
	\end{equation*}
	We can show the converse in the same manner by starting with \cref{eq:IN}.
\end{proof}

Define a map $\psi:\overline{\Teich_{g,n}^{\mathrm{Th}}}\to\overline{\Teich_{g,n}^{\mathrm{harm}}}$ by
\begin{equation*}
\psi(x)=
\begin{cases}
	\left(\displaystyle\lim_{n\to\infty} \frac{\Phi(x_n)}{\|\Phi(x_n)\|},1\right)  &(x\in\partial_{\mathrm{Th}}\Teich_{g,n},\ \{x_n\}\subset \Teich_{g,n} \ \text{with}\ x_n\to x)\\[15pt]
	\left( \displaystyle\frac{\Phi(x)}{\|\Phi(x)\|}, \frac{4\|\Phi(x)\|}{1+4\|\Phi(x)\|} \right) &(x\in\Teich_{g,n}),
\end{cases}
\end{equation*}
where we use the polar coordinates in $\overline{\BQD(\sigma)}$ (i.e. for $(r,\theta)\in\overline{\BQD(\sigma)}$, $\theta\in\SQD(\sigma)$ and $r\in[0,1]$).

Our main theorem, \cref{thm:mainthm}, is shown from \Cref{lemma:ConvergenceEquivalence}. The proof is essentially same as the proof of \cite[Theorem 4.1]{wolf1989teichmuller}. However we write the proof here for the sake of completeness.

\begin{theorem}\label{thm:mainthm}
	The map $\psi$ is a homeomorphism.
\end{theorem}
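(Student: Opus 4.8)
The plan is to verify the three standard ingredients — bijectivity, continuity, and the compact-to-Hausdorff principle — with essentially all the content concentrated at the Thurston boundary, where \Cref{lemma:ConvergenceEquivalence} does the heavy lifting. On the interior the map is transparently a homeomorphism: in the coordinates of $\QD(\sigma)$ one has $\psi(x)=4\Phi(x)/(1+4\|\Phi(x)\|)$, which is the composition of the homeomorphism $\Phi$ with the radial reparametrization $\Phi\mapsto 4\Phi/(1+4\|\Phi\|)$. The latter carries $\QD(\sigma)$ homeomorphically onto the open ball $\BQD(\sigma)$, so $\psi$ restricts to a homeomorphism $\Teich_{g,n}\to\BQD(\sigma)$, and it remains to treat the boundary.

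First I would record that $\pi\circ\beta$ restricts to a homeomorphism $\SQD(\sigma)\to\PMF$. Since $\beta(t\Phi)=t^{1/2}\beta\Phi$ as transverse measures, each ray from the origin in $\QD(\sigma)$ is mapped by $\beta$ into a ray in $\MF$; hence the Hubbard--Masur homeomorphism $\beta$ descends to a continuous bijection from the cross-section $\SQD(\sigma)$ onto $\PMF$, which is a homeomorphism by compactness of $\SQD(\sigma)$. Now fix $x\in\partial_{\mathrm{Th}}\Teich_{g,n}$ and a sequence $x_n\to x$ in $\Teich_{g,n}$. Because $x$ lies on the boundary, $\{x_n\}$ diverges to $\infty$, so by properness of $\Phi$ one has $\|\Phi(x_n)\|\to\infty$, and $\pi\circ\ell(x_n)$ converges in $\PMF$ to the class of $x$. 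By \Cref{lemma:ConvergenceEquivalence}, $\pi\circ\beta\Phi(x_n)$ converges to the same class, and since projectivizing ignores scaling, so does $\pi\circ\beta(\Phi(x_n)/\|\Phi(x_n)\|)$. Applying the inverse of the boundary homeomorphism $\pi\circ\beta|_{\SQD(\sigma)}$ shows that $\Phi(x_n)/\|\Phi(x_n)\|$ converges in $\SQD(\sigma)$ to a limit depending on the class of $x$ alone. This proves that the boundary definition of $\psi$ is independent of the chosen sequence and identifies $\psi|_{\partial_{\mathrm{Th}}}$ with $(\pi\circ\beta|_{\SQD(\sigma)})^{-1}$ under $\partial_{\mathrm{Th}}\Teich_{g,n}\cong\PMF$; in particular $\psi$ maps the boundary homeomorphically onto $\SQD(\sigma)$.

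Combining these two steps, $\psi$ is a bijection: it carries the interior bijectively onto the open ball and the boundary bijectively onto the unit sphere, and these pieces decompose $\overline{\Teich_{g,n}^{\mathrm{harm}}}$ disjointly. For continuity it suffices to check sequential continuity at a boundary point $x$, and by passing to subsequences I may assume the approaching sequence lies entirely in the interior or entirely on the boundary. In the interior case the radial coordinate $4\|\Phi(x_n)\|/(1+4\|\Phi(x_n)\|)\to 1$ since $\|\Phi(x_n)\|\to\infty$, while the angular coordinate converges to the prescribed limit by the computation just given; the boundary case is merely continuity of $(\pi\circ\beta|_{\SQD(\sigma)})^{-1}$. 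Either way $\psi(x_n)\to\psi(x)$. Finally, $\psi$ is a continuous bijection from the compact space $\overline{\Teich_{g,n}^{\mathrm{Th}}}$ to the Hausdorff space $\overline{\Teich_{g,n}^{\mathrm{harm}}}$, hence a homeomorphism.

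The genuine difficulty sits entirely inside \Cref{lemma:ConvergenceEquivalence}, which itself rests on the fundamental inequality \Cref{prop:FundamentalInequality}; once the equivalence of convergence of $\pi\circ\ell$ and $\pi\circ\beta\Phi$ is in hand, the rest reduces to the soft topology of finite-dimensional spheres and the compact-to-Hausdorff principle. The only points demanding care are the passage between divergence in $\Teich_{g,n}$ and the norm blow-up $\|\Phi(x_n)\|\to\infty$, which uses that $\Phi$ is a proper homeomorphism, and the handling of mixed interior/boundary sequences, which I would dispatch by the subsequence reduction above.
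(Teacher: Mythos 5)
Your proof is correct, and it reaches the theorem by a genuinely different organization than the paper, although both rest on the same engine, \Cref{lemma:ConvergenceEquivalence}. The paper runs five separate verifications---well-definedness on $\PMF$, continuity, injectivity, surjectivity, and continuity of $\psi^{-1}$---and invokes \Cref{lemma:ConvergenceEquivalence} afresh for each boundary step (well-definedness by comparing two approximating sequences and lifting scalings through $\beta^{-1}$; injectivity by running the same argument backwards; surjectivity via the rays $x_n=\Phi^{-1}(n\theta)$; inverse continuity via the sequences $x_n=\Phi^{-1}\bigl(r_n\theta_n/(4(1-r_n))\bigr)$). You instead first prove that the canonical identification $\pi\circ\beta|_{\SQD(\sigma)}\colon\SQD(\sigma)\to\PMF$ is a homeomorphism and then show, in a single application of \Cref{lemma:ConvergenceEquivalence}, that $\psi|_{\partial}$ equals its inverse; this collapses well-definedness, boundary bijectivity, and boundary bicontinuity into one step, and it also delivers for free the ``Moreover'' clause of the Main Theorem (that the boundary extension is $\overline{F_v}^{-1}$), which in the paper is only implicit in the surjectivity computation $\psi([F_v(\theta)])=(\theta,1)$. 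You are also more careful than the paper on one point: you split a sequence approaching a boundary point into interior and boundary subsequences, whereas the paper only treats interior sequences. The one trade-off is your final step: closing with the compact-to-Hausdorff principle requires knowing a priori that $\overline{\Teich_{g,n}^{\mathrm{Th}}}$ is compact (Thurston's theorem, quoted in Section 2), whereas the paper proves continuity of $\psi^{-1}$ directly and never uses compactness of the Thurston compactification. This matters for the paper's closing remark that \Cref{thm:mainthm} gives ``another proof'' that the Thurston compactification is a closed ball: on your route, compactness is an input rather than an output, so that remark loses some of its force (the ball structure is still a genuine conclusion). If you want to preserve it, replace your last step by the paper's direct check: for $(\theta_n,r_n)\to(\theta,1)$ set $x_n=\Phi^{-1}\bigl(r_n\theta_n/(4(1-r_n))\bigr)$, observe $\pi\circ\beta\Phi(x_n)=[F_v(\theta_n)]\to[F_v(\theta)]$, and apply \Cref{lemma:ConvergenceEquivalence} once more to conclude $x_n\to[F_v(\theta)]=\psi^{-1}(\theta,1)$.
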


\begin{proof}
	We first show that $\psi$ is well-defined on $\PMF$. For sequences $\{x_n\}$, $\{x_n'\}$ with $x_n,x_n'\to x \in \PMF$, there exist
	\begin{equation*}
		\lim_{n\to\infty} \pi\circ\beta\Phi(x_n)\  \text{and}\  \lim_{n\to\infty}\pi\circ\beta\Phi(x_n'),
	\end{equation*}
	and they coincide by \Cref{lemma:ConvergenceEquivalence}. Therefore there exist sequences $\{\lambda_n\}$, $\{\lambda_n'\}$ such that
	\begin{equation*}
		\lim_{n\to\infty}\lambda_n\beta\Phi(x_n)=\lim_{n\to\infty}\lambda_n'\beta\Phi(x_n').
	\end{equation*}
	Since $\beta$ is a homeomorphism,
	\begin{equation*}
		\lim_{n\to\infty}\lambda_n^2\Phi(x_n)=\lim_{n\to\infty}{\lambda'_n}^2\Phi(x_n').
	\end{equation*}
	Hence, we have
	\begin{equation*}
		\lim_{n\to\infty} \frac{\Phi(x_n)}{\|\Phi(x_n)\|}=\lim_{n\to\infty} \frac{\lambda_n^2\Phi(x_n)}{{\lambda_n}^2\|\Phi(x_n)\|}
		=\lim_{n\to\infty} \frac{{\lambda_n'}^2\Phi(x_n')}{{\lambda_n'}^2\|\Phi(x_n')\|}
		=\lim_{n\to\infty} \frac{\Phi(x_n')}{\|\Phi(x_n')\|}.
	\end{equation*}
	Thus, $\psi$ is well-defined.

	We secondly show that $\psi$ is continuous. In particular, we need to show $\psi$ is continuous at a point $x\in\PMF$. Let $\{x_n\}$ be a sequence in $\Teich_{g,n}$ with $x_n\to x$. Then, since $x_n$ diverges to $\infty$,
	\begin{equation*}
		\frac{4\|\Phi(x_n)\|}{1+4\|\Phi(x_n)\|} \to 1\ \ \text{as}\ \ n\to\infty.
	\end{equation*}
	Thus, we find that the second componet of $\psi$ is continuous. The first componet is continuous by the definition of $\psi$.

	We thirdly show that $\psi$ is injective. The injectivity on $\Teich_{g,n}$ follows from the injectivity of $\Phi$. Therefore, we need to show $\psi$ is injective on $\PMF$. Suppose that $\psi(x)=\psi(x')$ for $x,x'\in\PMF$. Let $x_n$ and $x_n'$ be sequences in $\Teich_{g,n}$ such that $x_n\to x\in\PMF$ and $x_n'\to x'\in\PMF$ as $n\to\infty$.
	Since $\psi(x)=\psi(x')$, we have
	\begin{equation*}
		\lim_{n\to\infty}\frac{\Phi(x_n)}{\|\Phi(x_n)\|}=\lim_{n\to\infty}\frac{\Phi(x_n')}{\|\Phi(x_n')\|}.
	\end{equation*}
	Therefore, we have
	\begin{equation*}
		\lim_{n\to\infty}\pi\circ\beta\Phi(x_n)=\lim_{n\to\infty}\pi\circ\beta\Phi(x_n').
	\end{equation*}
	By \cref{lemma:ConvergenceEquivalence}, we have
	\begin{equation*}
		\lim_{n\to\infty}\pi\circ\ell(x_n)=\lim_{n\to\infty}\pi\circ\ell(x_n').
	\end{equation*}
	This imples $x=x'$.
	Thus we find that $\psi$ is injective on $\PMF$.

	We forthly show that $\psi$ is surjective. The restriction $\psi|_{\Teich_{g,n}}$ is surjective, since $\Phi$ is a homeomorphism, so we need to show the surjectivity for $\SQD(\sigma)$.
	Taking each $\theta\in\SQD(\sigma)$, we set $x_n=\Phi^{-1}(n\theta)$. Then
	$\pi\circ \beta\Phi(x_n)=\pi\circ\beta(n\theta)=[F_{v}(\theta)] \in \PMF$ for every $n$. Therefore, we have
	\begin{equation*}
		\lim_{n\to\infty} \pi\circ\ell(x_n)=[F_v(\theta)]
	\end{equation*}
	by \Cref{lemma:ConvergenceEquivalence}. Thus,
	\begin{equation*}
		\psi([F_v(\theta)])=\left(\lim_{n\to\infty}\frac{\Phi(x_n)}{\|\Phi(x_n)\|},1\right)=(\theta,1).
	\end{equation*}
	Thus, $\psi$ is surjective.

	We finally show that $\psi^{-1}$ is continuous. Let $(\theta_n,r_n)\to(\theta,1)$ as $n\to\infty$ for each $\theta\in\SQD(\sigma)$, where $\theta_n\in \SQD(\sigma)$ and $r_n\in(0,1)$. Then
	\begin{equation*}
		\psi^{-1}(\theta_n,r_n)=\Phi^{-1}\left(\frac{r_n\theta_n}{4(1-r_n)}\right) \text{ and } \psi^{-1}(\theta,1)=[F_v(\theta)].
	\end{equation*}
	Therefore, setting $x_n=\psi^{-1}(\theta_n,r_n)$, we have
	\begin{equation*}
		\pi\circ\beta\Phi(x_n)=[F_v(\theta_n)].
	\end{equation*}
	Thus,
	\begin{equation*}
		\psi^{-1}(\theta,1)=[F_v(\theta)]=\lim_{n\to\infty}[F_v(\theta_n)]=\lim_{n\to\infty}\pi\circ\beta\Phi(x_n)=\lim_{n\to\infty}\psi^{-1}(\theta_n,r_n),
	\end{equation*}
	and this completes the proof.
\end{proof}

\begin{remark}
	This is another proof that the Thurston compactification $\overline{\Teich_{g,n}^{\mathrm{Th}}}$ is a closed ball of dimension $6g-6+2n$.
\end{remark}

\bibliography{HarmMapCptfy}
\bibliographystyle{alpha}

\end{document}